\newtheorem{theorem}{Theorem}[section]
\newtheorem{lemma}[theorem]{Lemma}
\newtheorem{corollary}[theorem]{Corollary}
\theoremstyle{definition}
\newtheorem{definition}[theorem]{Definition}
\newtheorem{remark}{Remark}
\newtheorem{example}{Example}
\DeclareMathOperator{\Int}{int}
\DeclareMathOperator{\Ext}{ext}
\begin{document}

	\title{On the Scalability of the Schwarz Method}
	
	\author{Gabriele Ciaramella}
	\address{Fachbereich Mathematik und Statistik, Universit\"at Konstanz, Germany}
	\email{gabriele.ciaramella@uni-konstanz.de}
	
	\author{Muhammad Hassan}
	\address{Center for Computational Engineering Science, Department of Mathematics, RWTH Aachen University, Germany}
	\email{hassan@mathcces.rwth-aachen.de}
	
	\author{Benjamin Stamm}
	\address{Center for Computational Engineering Science, Department of Mathematics, RWTH Aachen University, Germany}
	\email{stamm@mathcces.rwth-aachen.de}
	
	\keywords{Domain decomposition methods; Schwarz methods; chain of atoms; Laplace equation; ddCOSMO; Scalability analysis.}
	
	\subjclass{65N55; 65F10; 65N22; 35J05; 35J57.}

	%
	\begin{abstract}
		In this article, we analyse the convergence behaviour and scalability properties of the one-level Parallel Schwarz method (PSM) for domain decomposition problems in which the boundaries of many subdomains lie in the interior of the global domain. Such problems arise, for instance, in solvation models in computational chemistry. Existing results on the scalability of the one-level PSM are limited to situations where each subdomain has access to the external boundary, and at most only two subdomains have a common overlap. We develop a systematic framework that allows us to bound the norm of the Schwarz iteration operator for domain decomposition problems in which subdomains may be completely embedded in the interior of the global domain and an arbitrary number of subdomains may have a common overlap. 
	\end{abstract}
	\maketitle
	
	%
	
	\section{Introduction}
	
	Recent results in computational chemistry have raised intriguing questions about the scalability
	of classical one-level domain decomposition methods. In \cite{Stamm3} the authors
	combined the COSMO solvation model (see, for instance \cite{Barone,Klamt,Truong}) with the classical one-level
	parallel Schwarz method (PSM) \cite{Lions:1988} in a sub-structured integral form. 
	This allowed the authors to define a very efficient numerical framework capable of solving solvation problems and obtaining a fast and accurate computation of the electrostatic contribution to
	the solvation energy. This new framework, called ddCOSMO, exploits the physical setting of the COSMO model to define a domain decomposition. The computational domain consists of the union of van der Waals cavities associated with the atoms in the molecule, and therefore these spherical cavities can be chosen as the subdomains. The ddCOSMO implementation received a good deal of attention in the computational chemistry community (see, e.g., \cite{Stamm1,Stamm2}), and a major reason for its success was due to the scalability property exhibited by the classical Schwarz method. 
	
	An algorithm is said to be weakly scalable if it can solve a larger and larger problem with
	more and more processors in a fixed amount of time. According to classical Schwarz theory,
	the PSM is not scalable in general; see, e.g., \cite{ToselliWidlund,CiaramellaGander4}.
	However, in contrast to this theory, the authors in \cite{Stamm3} provide numerical evidence indicating that in some cases the one-level PSM converges to a given tolerance in a fixed number
	of iterations of a linear solver independently of the number $N$ of subdomains. This behaviour is observed
	if fixed-sized subdomains form a `chain-like' domain and their number increases. This numerical result was subsequently rigorously proved in \cite{CiaramellaGander,CiaramellaGander2,CiaramellaGander3}
	for the PSM and in \cite{CiaramellaGander4} for other one-level methods;
	see also \cite{LINDGREN2018712,HassanBen,CiaramellaRefl} for similar scalability results.
	
	On the other hand, it was observed by the authors that the weak scalability of the PSM is lost if the fixed-sized subdomains form a `globular-type'
	domain $\Omega$ such that the boundaries of many subdomains lie in the interior of $\Omega$. 
	The following question therefore arises: is it possible to quantify the lack of scalability of the PSM?
	To do so, for increasing $N$ one would need to estimate the number of iterations necessary to achieve
	a given tolerance. A typical example is to consider, for instance, the PSM for the solution of a one-dimensional Laplace problem.
	In \cite{CiaramellaGander4} a heuristic argument is used to explain why for an unfortunate initialization,
	a contraction in the infinity norm is observed only after a number of iterations proportional to the number of subdomains $N$. A first rigorous proof of this non-scalability result was given in \cite{CiaramellaHassanStamm}. Interestingly, the analysis turns out to be quite complicated even in a simple one-dimensional setting.
	
	Naturally, the problem becomes extremely challenging if one considers a three-dimensional geometry involving subdomains consisting of spherical van der Waals cavities where the simultaneous intersection
	of more than two subdomains is possible. To the best of our knowledge, a rigorous extension of the
	results given in \cite{CiaramellaHassanStamm} to higher dimensional problems is missing in the literature.
	The novelty of our work is to fill this gap and develop a general framework that allows one to study the convergence
	and scalability of the PSM for `globular-type' domains. Our framework is based on an analysis of the infinite-dimensional Schwarz iteration operator, and as such can be viewed as a generalization
	of the analysis technique used in \cite{CiaramellaGander4, CiaramellaGander} where finite-dimensional Schwarz
	iteration matrices are studied instead. Furthermore, whereas the scalability analysis in \cite{CiaramellaGander,CiaramellaGander2,CiaramellaGander3} focuses on chains of fixed sized subdomains where only simple intersections are allowed, i.e., the intersection of any three distinct subdomains is assumed to be empty, our analysis is capable of covering the case of $M$-tuple intersections, i.e., we allow an arbitrary number of subdomains to have a non-empty intersection.
	
	It is important to remark that similar and very elegant results can be found in \cite{GanderZhao} and \cite{Mathew}.
	In \cite{GanderZhao}, the authors prove that the Schwarz waveform-relaxation method applied to the heat equation contracts at
	most every $m+2$ iterations, $m$ being an integer representing the maximum distance of the subdomains from the boundary.
	However, the overlapping subdomains considered in this work are constructed by `artificially' enlarging the given non-overlapping subdomains. This construction allows the authors to avoid the case that P. L. Lions refers to as ``weakly overlapping'' in \cite{Lions2} and present a convergence analysis that is based on the construction of a sequence of elliptic upper bounds, and relies on the structure of the artificially constructed overlapping domain decomposition. On the other hand, the domain decomposition considered in our paper is defined by the physics of the underlying problem and it is clearly weakly overlapping. Consequently, the proof given in \cite{GanderZhao} fails for the 
	geometric settings considered in our work. We wish also to remark that our convergence analysis is based on a direct study of the PSM iteration operator which allows us to identify the `worst initialization' and carefully track the propagation of the contraction across different subdomains comprising $\Omega$ in the course of the iterations. This is in contrast to a study of the Schwarz sequence as done in \cite{GanderZhao}. Furthermore, in order to undertake our analysis we introduce the notion of the so-called skeleton of a subdomain which allows us to carefully trace the effects of the Schwarz operator on functions defined inside the subdomains rather than on the boundaries, as done in \cite{GanderZhao}.
	
	Another interesting convergence analysis of classical Schwarz methods for several intersecting subdomains is presented in \cite{Mathew}.
	This work deals with stationary advection-reaction-diffusion problems and allows one to obtain powerful scalability results. In fact, the author's analysis which is based on the maximum principle and so-called
	barrier functions even yields estimates of the contraction factor.
	Unfortunately, this beautiful analysis fails in the case of the Laplace equation due to the lack of advection or reaction terms, and is also not valid for domain decomposition problems in which the boundaries of many subdomains lie in the interior of the global domain. In both these cases the bounds obtained via the barrier functions become ineffective and invalidate the analysis.
	
	The remainder of this paper is organized as follows: The main ideas and results of this work are stated in Section \ref{sec:2}. We first introduce the notation and all the mathematical objects needed for a detailed description of the domain geometry in Subsection \ref{sec:Geom}. Next, the PSM is formulated in Subsection \ref{sec:2.2}, and the convergence analysis is presented in Subsection \ref{sec:conv}. Our main results are stated in Subsection \ref{sec:ConvergenceResults}, and subsequently discussed with some examples in Subsection \ref{sec:Examples}. We remark that in Section \ref{sec:2} we restrict ourselves to a two-dimensional framework where only triple intersections are allowed. This choice is made in order to ease the notation, which is otherwise very technical and complicated, and to put more focus on the techniques used to prove our results. We show in Section \ref{sec:Extensions} how to extend our analysis to the case of arbitrary types and numbers of intersections and discuss the extension to three dimensions. Section \ref{sec:num} contains numerical experiments that support our main results. In Section \ref{sec:Protein}, we consider concrete biological molecules that have been studied in solvation models in computational chemistry (see, e.g., \cite{BenPaper}) and understand the consequences of our analysis as it pertains to these complex bio-molecules. Finally, we present our conclusions in Section \ref{sec:conclusions}.
	
	\section{The Schwarz method in two dimensions}\label{sec:2}
	
	\subsection{Geometric setting}\label{sec:Geom}
	
	We consider the Laplace equation in two dimensions. Let $\Omega \subset \mathbb{R}^2$ be an open, connected and bounded set, let $g \in C^0(\partial \Omega)$ be a given function. We must find $u \in L^{\infty}(\Omega)$ with the property that
	\begin{equation}\label{eq:1}
	\begin{split}
	\Delta u &=0 \qquad \text{in } \Omega,\\
	u&=g \qquad  \text{on } \partial\Omega.
	\end{split}
	\end{equation}
	Throughout this article, we assume that the domain $\Omega$ has the following structure: Let $\Omega_i, ~i=1,\ldots, N$ be a collection of $N$ intersecting disks of radii $r_i > 0, ~i=1,\ldots, N$. Then $\Omega = \cup_{i=1}^N \Omega_i$. An example of such a geometry is shown in Figure \ref{fig:1}. In order to avoid discussing an empty theory, we assume that $N>1$ and there exist at least two indices $i, j \in \{1, \ldots, N\}$ such that $\Omega_i \cap \Omega_j \neq \emptyset$.
	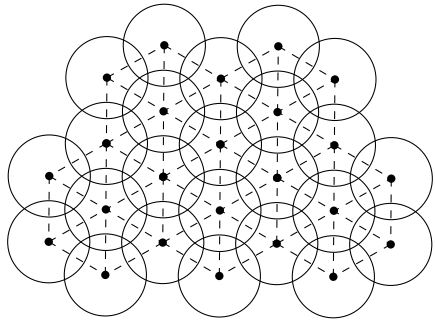
\begin{figure}
		\centering
		\begin{tikzpicture}[scale=0.35]
		\draw(26.02,12.5) circle (1.56cm);
		\draw(26,10) circle (1.56cm);
		\draw(23.82,8.77) circle (1.56cm);
		\draw(21.67,10.03) circle (1.56cm);
		\draw(21.69,12.53) circle (1.56cm);
		\draw(23.86,13.77) circle (1.56cm);
		\draw[dashed] (21.69,12.53)-- (26,10);
		\draw[dashed] (21.67,10.03)-- (26.02,12.5);
		\draw(23.84,11.26) circle (1.56cm);
		\draw(19.49,8.8) circle (1.56cm);
		\draw(17.34,10.07) circle (1.56cm);
		\draw(17.36,12.57) circle (1.56cm);
		\draw(19.53,13.8) circle (1.56cm);
		\draw[dashed] (17.36,12.57)-- (21.67,10.03);
		\draw[dashed] (17.34,10.07)-- (21.69,12.53);
		\draw(19.51,11.29) circle (1.56cm);
		\draw(23.88,16.27) circle (1.56cm);
		\draw(19.55,16.3) circle (1.56cm);
		\draw(21.73,17.53) circle (1.56cm);
		\draw[dashed] (19.55,16.3)-- (23.86,13.77);
		\draw[dashed] (19.53,13.8)-- (23.88,16.27);
		\draw(21.7,15.03) circle (1.56cm);
		\draw(15.2,13.84) circle (1.56cm);
		\draw(15.22,16.34) circle (1.56cm);
		\draw(17.4,17.57) circle (1.56cm);
		\draw[dashed] (15.22,16.34)-- (19.53,13.8);
		\draw[dashed] (15.2,13.84)-- (19.55,16.3);
		\draw(17.37,15.06) circle (1.56cm);
		\draw(15.16,8.84) circle (1.56cm);
		\draw(13.01,10.1) circle (1.56cm);
		\draw(13.03,12.6) circle (1.56cm);
		\draw[dashed] (13.03,12.6)-- (17.34,10.07);
		\draw[dashed] (13.01,10.1)-- (17.36,12.57);
		\draw(15.18,11.33) circle (1.56cm);
		\draw[dashed] (15.22,16.34)-- (17.4,17.57);
		\draw[dashed] (17.4,17.57)-- (19.55,16.3);
		\draw[dashed] (17.4,17.57)-- (17.37,15.06);
		\draw[dashed] (19.55,16.3)-- (21.73,17.53);
		\draw[dashed] (21.73,17.53)-- (23.88,16.27);
		\draw[dashed] (21.7,15.03)-- (21.73,17.53);
		\draw[dashed] (15.22,16.34)-- (15.2,13.84);
		\draw[dashed] (15.2,13.84)-- (17.36,12.57);
		\draw[dashed] (17.36,12.57)-- (17.37,15.06);
		\draw[dashed] (17.36,12.57)-- (19.53,13.8);
		\draw[dashed] (19.53,13.8)-- (19.51,11.29);
		\draw[dashed] (19.53,13.8)-- (21.69,12.53);
		\draw[dashed] (21.69,12.53)-- (21.7,15.03);
		\draw[dashed] (23.88,16.27)-- (23.86,13.77);
		\draw[dashed] (23.86,13.77)-- (21.69,12.53);
		\draw[dashed] (23.86,13.77)-- (23.84,11.26);
		\draw[dashed] (23.86,13.77)-- (26.02,12.5);
		\draw[dashed] (26.02,12.5)-- (26,10);
		\draw[dashed] (26,10)-- (23.82,8.77);
		\draw[dashed] (23.82,8.77)-- (23.84,11.26);
		\draw[dashed] (21.67,10.03)-- (23.82,8.77);
		\draw[dashed] (21.67,10.03)-- (19.49,8.8);
		\draw[dashed] (15.2,13.84)-- (15.18,11.33);
		\draw[dashed] (15.2,13.84)-- (13.03,12.6);
		\draw[dashed] (13.03,12.6)-- (13.01,10.1);
		\draw[dashed] (13.01,10.1)-- (15.16,8.84);
		\draw[dashed] (15.16,8.84)-- (15.18,11.33);
		\draw[dashed] (17.34,10.07)-- (15.16,8.84);
		\draw[dashed] (17.34,10.07)-- (19.49,8.8);
		\draw[dashed] (19.49,8.8)-- (19.51,11.29);
		\draw[dashed] (21.67,10.03)-- (21.69,12.53);
		\draw[dashed] (19.55,16.3)-- (19.53,13.8);
		\draw[dashed] (17.36,12.57)-- (17.34,10.07);
		\begin{scriptsize}
		\fill [color=black] (26,10) circle (4.5pt);
		\fill [color=black] (26.02,12.5) circle (4.5pt);
		\fill [color=black] (23.86,13.77) circle (4.5pt);
		\fill [color=black] (21.69,12.53) circle (4.5pt);
		\fill [color=black] (21.67,10.03) circle (4.5pt);
		\fill [color=black] (23.82,8.77) circle (4.5pt);
		\fill [color=black] (19.53,13.8) circle (4.5pt);
		\fill [color=black] (17.36,12.57) circle (4.5pt);
		\fill [color=black] (17.34,10.07) circle (4.5pt);
		\fill [color=black] (19.49,8.8) circle (4.5pt);
		\fill [color=black] (19.51,11.29) circle (4.5pt);
		\fill [color=black] (23.88,16.27) circle (4.5pt);
		\fill [color=black] (21.73,17.53) circle (4.5pt);
		\fill [color=black] (19.55,16.3) circle (4.5pt);
		\fill [color=black] (19.53,13.8) circle (4.5pt);
		\fill [color=black] (21.69,12.53) circle (4.5pt);
		\fill [color=black] (21.7,15.03) circle (4.5pt);
		\fill [color=black] (17.4,17.57) circle (4.5pt);
		\fill [color=black] (15.22,16.34) circle (4.5pt);
		\fill [color=black] (15.2,13.84) circle (4.5pt);
		\fill [color=black] (17.36,12.57) circle (4.5pt);
		\fill [color=black] (17.37,15.06) circle (4.5pt);
		\fill [color=black] (15.2,13.84) circle (4.5pt);
		\fill [color=black] (13.03,12.6) circle (4.5pt);
		\fill [color=black] (13.01,10.1) circle (4.5pt);
		\fill [color=black] (15.16,8.84) circle (4.5pt);
		\fill [color=black] (15.18,11.33) circle (4.5pt);
		\fill [color=black] (23.84,11.28) circle (4.5pt);
		\end{scriptsize}
		\end{tikzpicture}
		\caption{An example of a collection of disks $\{\Omega_i\}_{i=1}^N \subset \mathbb{R}^2$ on a triangular lattice composed of dots and dashed lines.}
		\label{fig:1}
	\end{figure}
	
	We impose the following constraints on the subdomains $\Omega_i, ~ i=1, \ldots, N$: 
	
	\begin{enumerate}
		\item[A1)] If two subdomains $\Omega_i$ and $\Omega_j$ have non-empty intersection, then there exist subsets of the boundaries $\Gamma_i \subset \partial \Omega_i \cap \Omega_j$ and $\Gamma_j \subset \partial \Omega_j \cap \Omega_i$ both of positive measure such that
		\begin{align*}
		\Gamma_i \cap \Omega_k &= \emptyset ~ \forall k = 1, \ldots, N \text{ with } k\neq i, j,\\
		\Gamma_j \cap \Omega_k &= \emptyset ~ \forall k = 1, \ldots, N \text{ with } k\neq i, j.
		\end{align*}
		In other words, we assume that if two subdomains $\Omega_i$ and $\Omega_j$ intersect, then they must have at least a simple intersection.
		\item[A2)] For any choice of distinct indices $i_1, i_2, i_3, i_4 \in \{1, \ldots, N\}$ it holds that $\Omega_{i_1} \cap \Omega_{i_2} \cap \Omega_{i_3} \cap \Omega_{i_4} = \emptyset$, i.e., at most only three {distinct subdomains} in the collection $\{\Omega_i\}_{i=1}^N$ have non-empty intersection. In other words, we assume that our geometry consists of at most triple intersections.
	\end{enumerate} 
	
	Constraint A1) is imposed purely for notational convenience, and it is readily seen that the subsequent analysis does not require such an assumption. Constraint A2) is much stronger but needs to be imposed-at least initially- in order to keep the focus on the analysis rather than the notational complexities that would otherwise be introduced. In Section \ref{sec:Extensions}, we discuss how to weaken this constraint, and show that all of our results can be extended to the case of arbitrary types of intersections.

	We now develop the necessary notation.
	
	\subsubsection{Partition of the boundary}\label{sec:2.1.1}
	
	Let $j \in \{1, \ldots, N\}$. Given the disk $\Omega_j$ we define the sets
	\begin{equation*}
	\Gamma_j^\text{ext}:= \partial \Omega_j \cap \partial \Omega,
	\quad
	\Gamma_j^{\Int} := \overline{\partial \Omega_j \setminus \Gamma_j^\text{ext}}.
	\end{equation*}
	{These sets} represent a decomposition of the boundary $\partial \Omega_j$ into an `external' part 
	{$\Gamma_j^\text{ext}$} that is common with the boundary $\partial \Omega$ of the global domain and an `internal' part {$\Gamma_j^{\Int}$} that is {contained in} the global domain $\Omega$. Notice that the sets $\Gamma_j^{\text{ext}}$ and $\Gamma_j^{\text{int}}$ are both closed. An example of a domain $\Omega=\cup_{j=1}^N \Omega_j$ and the decomposition of the subdomain boundaries $\partial \Omega_j$ is shown in Figure \ref{fig:2}. 
	
	\begin{figure}[h]
		\centering
		\begin{tikzpicture}[scale=0.85]
		
		\begin{scope}[scale=0.7,xshift=11.4cm,yshift=4.8cm]
		\draw(26.02,12.5) circle (1.56cm);
		\draw(26,10) circle (1.56cm);
		\draw(23.84,11.26) circle (1.56cm);
		\fill [black] (24.47,12.69) circle (2.4pt);
		\fill [black] (25.39,11.07) circle (2.4pt);
		\fill [black] (24.45,9.82) circle (2.4pt);
		\fill [black] (25.39,11.44) circle (2.4pt);
		\fill [black] (25.08,11.26) circle (2.4pt);
		\draw[black] (23.50,11.2) node {$\Omega_1$};
		\draw[black] (26.20,13.2) node {$\Omega_2$};
		\draw[black] (26.20,9.2) node {$\Omega_3$};
		\end{scope}
		
		\draw(31.84,11.26) circle (1.56cm);
		\fill [black] (32.47,12.69) circle (2.1pt);
		\fill [black] (33.39,11.07) circle (2.1pt);
		\fill [black] (32.45,9.82) circle (2.1pt);
		\fill [black] (33.39,11.44) circle (2.1pt);
		\fill [black] (33.08,11.26) circle (2.1pt);
		
		\draw [->] (34.2,12.25) -- (33.47,11.52);
		\draw [->] (34.2,10.47) -- (33.47,10.99);

		\draw [shift={(34,10)}] plot[domain=1.97:3.25,variable=\t]({1*1.56*cos(\t r)+0*1.56*sin(\t r)},{0*1.56*cos(\t r)+1*1.56*sin(\t r)});
		\draw [shift={(34.02,12.5)}] plot[domain=3.02:4.3,variable=\t]({1*1.56*cos(\t r)+0*1.56*sin(\t r)},{0*1.56*cos(\t r)+1*1.56*sin(\t r)});
		\draw [->](27.44,11.22)-- (29.08,11.22);
		\draw [<->,shift={(31.84,11.26)}] plot[domain=1.2:5.02,variable=\t]({1*1.78*cos(\t r)+0*1.78*sin(\t r)},{0*1.78*cos(\t r)+1*1.78*sin(\t r)});
		\draw [<->,shift={(31.84,11.26)}] plot[domain=-1.12:1.1,variable=\t]({1*1.78*cos(\t r)+0*1.78*sin(\t r)},{0*1.78*cos(\t r)+1*1.78*sin(\t r)});
		
		\draw[black] (34.05,11.2) node {$\Gamma_1^{\Int}$};
		\draw[black] (31.00,11.2) node {$\Omega_1$};
		\draw[black] (29.70,11.2) node {$\Gamma_1^\text{ext}$};
		\draw[black] (28.20,11.45) node {pick $\Omega_1$};
		\begin{scriptsize}
		\draw[black] (33.05,11.65) node {$\Gamma_2^{1,3}$};
		\draw[black] (32.20,11.8) node {$\Gamma_2^{1,0}$};
		\draw[black] (33.05,10.85) node {$\Gamma_3^{1,2}$};
		\draw[black] (32.20,10.35) node {$\Gamma_3^{1,0}$};
		
		\draw[black] (34.35,12.35) node {$A$};
		\draw[black] (34.35,10.47) node {$B$};
		\draw[black] (32.20,12.6) node {$P_1$};
		\draw[black] (32.20,9.95) node {$P_2$};
		
		\end{scriptsize}
		\end{tikzpicture}
		\caption{An example of three intersecting subdomains $\Omega_1$, $\Omega_2$ and $\Omega_3$ (left), external and internal boundaries
			of $\Omega_1$, and boundaries of $\Omega_2$ and $\Omega_3$ intersecting $\Omega_1$ (right).}
		\label{fig:2}
	\end{figure}
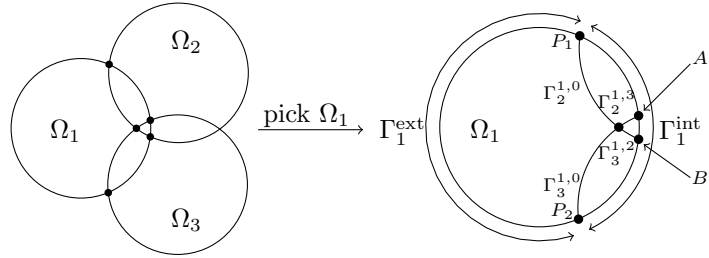

	Next, {a further decomposition of $\Gamma_j^{\Int}$ is considered}. 
	To this end, we first define the set ${N}_j$ as
	\begin{equation*}
	{N}_j := \{i \in \mathbb{N}, i\neq j \colon \Omega_i \cap \Omega_j \neq \emptyset\}.
	\end{equation*}
	Thus, ${N}_j$ is the set of indices of all subdomains $\{\Omega_i\}_{i=1, i\neq j}^N$ that intersect the subdomain $\Omega_j$. Informally, we refer to ${N}_j$ as the index set of neighbours of $\Omega_j$.
	Furthermore, given the set ${N}_j$ we define for each $k \in {N}_j$ the set $ N_{jk} \subset \mathbb{N}_0$ as
	\begin{equation*}
	N_{jk} := (N_j \cap N_k) \cup \{0\}.
	\end{equation*}
	
	Next, for each $j \in \{1, \ldots, N\}$, each $k \in {N}_j$ and each $i \in N_{jk}$ we define the set $\Gamma_{j}^{k, i} \subset \Gamma_j^{{\Int}}$ as
	\begin{equation*}
	\Gamma_{j}^{k, i}:= \begin{cases}
	\text{int} \left\{x \in \partial \Omega_j \colon x \in \Omega_k \cap \Omega_{i}\right\} \quad & \text{ if } i \neq 0,\\
	\text{int} \left\{x \in \partial \Omega_j \colon (x\in \Omega_k) \land \big(x \notin \Omega_{\ell} ~\forall \ell \in N_{j} \text{ such that } \ell\neq k\big)\right\} \quad & \text{ if } i = 0.
	\end{cases}
	\end{equation*}
	where $\text{int}(\cdot)$ denotes the interior of a set. Intuitively, for a fixed $k \in {N}_j$ and $0\neq i \in N_{jk}$, the set $\Gamma_{j}^{k, i}$ denotes the portion {of $\partial \Omega_j$} that intersects both subdomain $k$ and subdomain $i$, while $\Gamma_{j}^{k, 0}$ denotes the portion {of $\partial \Omega_j$} that intersects only subdomain $k$. We remark that the set $\Gamma_j^{k, i}$ is open for every $j \in \{1, \ldots, N\}, k \in {N}_j$ and $i \in N_{jk}$.
	
	These definitions allow us to partition the `interior' part of the boundary $\Gamma_j^{\Int}$ in a natural manner. Indeed we obtain that 
	\begin{equation*}
	\Gamma_j^{{\Int}}= \overline{\cup_{k \in {N}_j} \cup_{i \in N_{jk}} \Gamma_{j}^{k, i}}.
	\end{equation*}
	
	We remark that by definition, for a fixed $j \in \{1, \ldots, N\}$ and $i, k \neq 0$ it holds that $\Gamma_j^{k, i}= \Gamma_j^{i, k}$.

	As the last step, we wish to introduce the notion of so-called \emph{skeletons} associated with each subdomain. To this end, let $j \in \{1, \ldots, N\}$ and $k \in {N}_j$ be fixed. Then we define the sets $\mathcal{S}_{j, k}^{{\Int}}$ and $\mathcal{S}_{j, k}$ as
	\begin{equation*}
	\mathcal{S}_{j, k} := \overline{\bigcup_{i \in N_{jk}}\Gamma_k^{j, i}}, \qquad
	\mathcal{S}_{j, k}^{{\Int}}  := \mathcal{S}_{j, k} \setminus \partial \Omega_j,
	\end{equation*}
	
	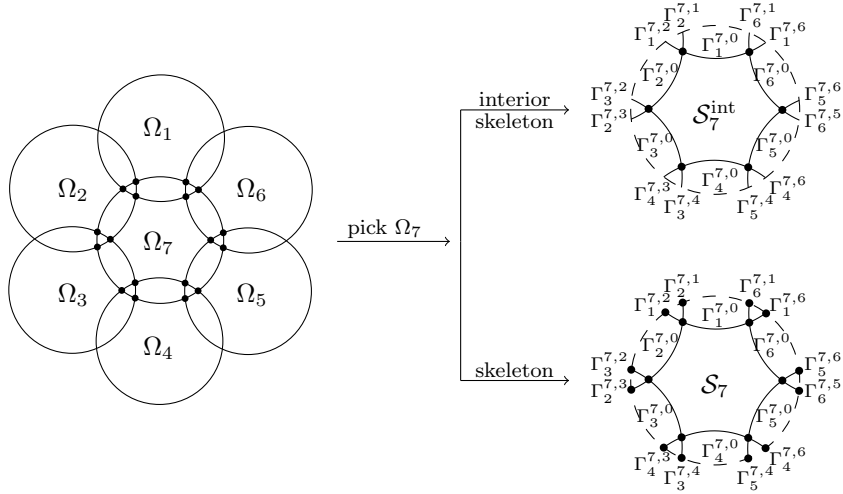
\begin{figure}[h]
		\centering
		\begin{tikzpicture}[scale=0.72]
		
		\begin{scope}[scale=0.75,xshift=10.5cm,yshift=4.2cm]
		\draw[black] (23.8,11.2) node {$\Omega_7$};
		\draw[black] (23.8,14.0) node {$\Omega_1$};
		\draw[black] (21.7,12.5) node {$\Omega_2$};
		\draw[black] (26.1,12.5) node {$\Omega_6$};
		\draw[black] (21.7,9.9) node {$\Omega_3$};
		\draw[black] (26.1,9.9) node {$\Omega_5$};
		\draw[black] (23.8,8.6) node {$\Omega_4$};
		\draw(26.02,12.5) circle (1.56cm);
		\draw(26,10) circle (1.56cm);
		\draw(23.82,8.77) circle (1.56cm);
		\draw(21.67,10.03) circle (1.56cm);
		\draw(21.69,12.53) circle (1.56cm);
		\draw(23.86,13.77) circle (1.56cm);
		\draw(23.84,11.26) circle (1.56cm);
		\fill [color=black] (22.29,11.46) circle (2.4pt);
		\fill [color=black] (22.29,11.09) circle (2.4pt);
		\fill [color=black] (22.61,11.28) circle (2.4pt);
		\fill [color=black] (22.89,10.02) circle (2.4pt);
		\fill [color=black] (23.22,10.21) circle (2.4pt);
		\fill [color=black] (23.22,9.83) circle (2.4pt);
		\fill [color=black] (24.45,9.82) circle (2.4pt);
		\fill [color=black] (24.45,10.2) circle (2.4pt);
		\fill [color=black] (24.77,10.01) circle (2.4pt);
		\fill [color=black] (25.39,11.07) circle (2.4pt);
		\fill [color=black] (25.08,11.26) circle (2.4pt);
		\fill [color=black] (25.39,11.44) circle (2.4pt);
		\fill [color=black] (24.78,12.5) circle (2.4pt);
		\fill [color=black] (24.47,12.69) circle (2.4pt);
		\fill [color=black] (24.47,12.33) circle (2.4pt);
		\fill [color=black] (23.24,12.7) circle (2.4pt);
		\fill [color=black] (23.24,12.34) circle (2.4pt);
		\fill [color=black] (22.92,12.52) circle (2.4pt);
		\end{scope}
		
		\begin{scope}[xshift=0.3cm]
		\draw [->](31,14)-- (33,14);
		\draw [->](31,9)-- (33,9);
		\draw (31,14)-- (31,11.57);
		\draw (31,11.57)-- (31,9);
		\draw [<-](30.8,11.57)-- (28.72,11.57);
		\begin{footnotesize}
		\draw[black] (29.6,11.80) node {pick $\Omega_7$};
		\draw[black] (32,14.2) node {interior};
		\draw[black] (32,13.8) node {skeleton};
		\draw[black] (32,9.2) node {skeleton};
		\end{footnotesize}
		\end{scope}
		
		\draw [dash pattern=on 5pt off 5pt] (36,9) circle (1.56cm);
		\draw [shift={(33.85,10.27)}] plot[domain=-1.18:0.11,variable=\t]({1*1.56*cos(\t r)+0*1.56*sin(\t r)},{0*1.56*cos(\t r)+1*1.56*sin(\t r)});
		\draw [shift={(33.83,7.77)}] plot[domain=-0.13:1.16,variable=\t]({1*1.56*cos(\t r)+0*1.56*sin(\t r)},{0*1.56*cos(\t r)+1*1.56*sin(\t r)});
		\draw [shift={(35.98,6.51)}] plot[domain=0.92:2.21,variable=\t]({1*1.56*cos(\t r)+0*1.56*sin(\t r)},{0*1.56*cos(\t r)+1*1.56*sin(\t r)});
		\draw [shift={(38.16,7.74)}] plot[domain=1.97:3.25,variable=\t]({1*1.56*cos(\t r)+0*1.56*sin(\t r)},{0*1.56*cos(\t r)+1*1.56*sin(\t r)});
		\draw [shift={(38.18,10.24)}] plot[domain=3.02:4.3,variable=\t]({1*1.56*cos(\t r)+0*1.56*sin(\t r)},{0*1.56*cos(\t r)+1*1.56*sin(\t r)});
		\draw [shift={(36.02,11.51)}] plot[domain=4.06:5.34,variable=\t]({1*1.56*cos(\t r)+0*1.56*sin(\t r)},{0*1.56*cos(\t r)+1*1.56*sin(\t r)});
		
		\draw[black] (36.0,8.90) node {$\mathcal{S}_{7}$};
		\begin{scriptsize}
		\draw[black] (36.1,10.20) node {$\Gamma_1^{7,0}$};
		\draw[black] (35.0,9.65) node {$\Gamma_2^{7,0}$};
		\draw[black] (37.05,9.65) node {$\Gamma_6^{7,0}$};
		\draw[black] (36.1,7.70) node {$\Gamma_4^{7,0}$};
		\draw[black] (34.9,8.35) node {$\Gamma_3^{7,0}$};
		\draw[black] (37.1,8.35) node {$\Gamma_5^{7,0}$};
		\draw[black] (35.4,10.75) node {$\Gamma_2^{7,1}$};
		\draw[black] (34.85,10.4) node {$\Gamma_1^{7,2}$};
		\draw[black] (36.8,10.75) node {$\Gamma_6^{7,1}$};
		\draw[black] (37.35,10.4) node {$\Gamma_1^{7,6}$};
		
		\draw[black] (38.0,8.80) node {$\Gamma_6^{7,5}$};
		\draw[black] (38.0,9.30) node {$\Gamma_5^{7,6}$};
		
		\draw[black] (34.05,9.30) node {$\Gamma_3^{7,2}$};
		\draw[black] (34.05,8.80) node {$\Gamma_2^{7,3}$};
		
		\draw[black] (35.4,7.2) node {$\Gamma_3^{7,4}$};
		\draw[black] (34.85,7.45) node {$\Gamma_4^{7,3}$};
		\draw[black] (36.75,7.2) node {$\Gamma_5^{7,4}$};
		\draw[black] (37.35,7.55) node {$\Gamma_4^{7,6}$};
		\end{scriptsize}
		
		\fill [color=black] (34.45,9.2) circle (2.1pt);
		\fill [color=black] (34.45,8.83) circle (2.1pt);
		\fill [color=black] (34.77,9.02) circle (2.1pt);
		\fill [color=black] (35.05,7.76) circle (2.1pt);
		\fill [color=black] (35.38,7.95) circle (2.1pt);
		\fill [color=black] (35.38,7.57) circle (2.1pt);
		\fill [color=black] (36.61,7.56) circle (2.1pt);
		\fill [color=black] (36.61,7.94) circle (2.1pt);
		\fill [color=black] (36.93,7.75) circle (2.1pt);
		\fill [color=black] (37.55,8.81) circle (2.1pt);
		\fill [color=black] (37.24,9) circle (2.1pt);
		\fill [color=black] (37.55,9.18) circle (2.1pt);
		\fill [color=black] (36.94,10.24) circle (2.1pt);
		\fill [color=black] (36.63,10.43) circle (2.1pt);
		\fill [color=black] (36.63,10.07) circle (2.1pt);
		\fill [color=black] (35.4,10.44) circle (2.1pt);
		\fill [color=black] (35.4,10.08) circle (2.1pt);
		\fill [color=black] (35.08,10.26) circle (2.1pt);
		
		\begin{scope}[yshift=5.0cm]
		\draw [dash pattern=on 5pt off 5pt] (36,9) circle (1.56cm);
		\draw [shift={(33.85,10.27)}] plot[domain=-1.18:0.11,variable=\t]({1*1.56*cos(\t r)+0*1.56*sin(\t r)},{0*1.56*cos(\t r)+1*1.56*sin(\t r)});
		\draw [shift={(33.83,7.77)}] plot[domain=-0.13:1.16,variable=\t]({1*1.56*cos(\t r)+0*1.56*sin(\t r)},{0*1.56*cos(\t r)+1*1.56*sin(\t r)});
		\draw [shift={(35.98,6.51)}] plot[domain=0.92:2.21,variable=\t]({1*1.56*cos(\t r)+0*1.56*sin(\t r)},{0*1.56*cos(\t r)+1*1.56*sin(\t r)});
		\draw [shift={(38.16,7.74)}] plot[domain=1.97:3.25,variable=\t]({1*1.56*cos(\t r)+0*1.56*sin(\t r)},{0*1.56*cos(\t r)+1*1.56*sin(\t r)});
		\draw [shift={(38.18,10.24)}] plot[domain=3.02:4.3,variable=\t]({1*1.56*cos(\t r)+0*1.56*sin(\t r)},{0*1.56*cos(\t r)+1*1.56*sin(\t r)});
		\draw [shift={(36.02,11.51)}] plot[domain=4.06:5.34,variable=\t]({1*1.56*cos(\t r)+0*1.56*sin(\t r)},{0*1.56*cos(\t r)+1*1.56*sin(\t r)});
		
		\draw[black] (36.0,8.90) node {$\mathcal{S}_{7}^{\Int}$};
		\begin{scriptsize}
		\draw[black] (36.1,10.20) node {$\Gamma_1^{7,0}$};
		\draw[black] (35.0,9.65) node {$\Gamma_2^{7,0}$};
		\draw[black] (37.05,9.65) node {$\Gamma_6^{7,0}$};
		\draw[black] (36.1,7.70) node {$\Gamma_4^{7,0}$};
		\draw[black] (34.9,8.35) node {$\Gamma_3^{7,0}$};
		\draw[black] (37.1,8.35) node {$\Gamma_5^{7,0}$};
		\draw[black] (35.4,10.75) node {$\Gamma_2^{7,1}$};
		\draw[black] (34.85,10.4) node {$\Gamma_1^{7,2}$};
		\draw[black] (36.8,10.75) node {$\Gamma_6^{7,1}$};
		\draw[black] (37.35,10.4) node {$\Gamma_1^{7,6}$};
		
		\draw[black] (38.0,8.80) node {$\Gamma_6^{7,5}$};
		\draw[black] (38.0,9.30) node {$\Gamma_5^{7,6}$};
		
		\draw[black] (34.05,9.30) node {$\Gamma_3^{7,2}$};
		\draw[black] (34.05,8.80) node {$\Gamma_2^{7,3}$};
		
		\draw[black] (35.4,7.2) node {$\Gamma_3^{7,4}$};
		\draw[black] (34.85,7.45) node {$\Gamma_4^{7,3}$};
		\draw[black] (36.75,7.2) node {$\Gamma_5^{7,4}$};
		\draw[black] (37.35,7.55) node {$\Gamma_4^{7,6}$};
		\end{scriptsize}
		
		\fill [color=black] (34.77,9.02) circle (2.1pt);
		\fill [color=black] (35.38,7.95) circle (2.1pt);
		\fill [color=black] (36.61,7.94) circle (2.1pt);
		\fill [color=black] (37.24,9) circle (2.1pt);
		\fill [color=black] (36.63,10.07) circle (2.1pt);
		\fill [color=black] (35.4,10.08) circle (2.1pt);
		\end{scope}
		\end{tikzpicture}
		\caption{The skeleton and interior skeleton corresponding to the subdomain $\Omega_7$ are shown. Notice that the difference between the skeleton and the interior skeleton of $\Omega_7$ given by the points of $\mathcal{S}_7$ that intersect $\partial \Omega_7$:
			$\mathcal{S}_7 \setminus \mathcal{S}_7^{\Int} =  \mathcal{S}_7 \cap \partial \Omega_7$.}
		\label{fig:3}
	\end{figure}
	
	Intuitively, for a fixed subdomain $\Omega_j$ and its neighbouring subdomain $\Omega_k$, the set $\mathcal{S}_{j, k}^{\Int}$ consists of the closure of all the interior boundaries of the subdomain $\Omega_k$ that intersect $\Omega_j$ \emph{excluding points on the boundary $\partial \Omega_j$}. Similarly, the set $\mathcal{S}_{j, k}$ consists of the closure of all the interior boundaries of the subdomain $\Omega_k$ that intersect {$\overline{\Omega}_j$}. It is now easy to see that for a fixed subdomain $\Omega_j$ it holds that
	\begin{equation*}
	\bigcup_{j \in {N}_k} \mathcal{S}_{j, k}= \Gamma_k^{{\Int}}.
	\end{equation*}
	Finally, for a fixed $j \in \{1, \ldots, N\}$ we define the sets $\mathcal{S}_{j}$ and $\mathcal{S}_{j}^{{\Int}}$ as
	\begin{equation*}
	\mathcal{S}_{j} := \bigcup_{k \in {N}_j} \mathcal{S}_{j, k},
	\quad
	\mathcal{S}_{j}^{{\Int}} := \bigcup_{k \in {N}_j} \mathcal{S}_{j, k}^{{\Int}},
	\end{equation*}
	and we say that $\mathcal{S}_j$ and $\mathcal{S}_j^{{\Int}}$ are the skeleton and the interior skeleton of the subdomain $\Omega_j$. An example of the skeleton and interior skeleton of a subdomain $\Omega_j$ is given in Figure \ref{fig:3}.
	
	\noindent\textbf{Notation:} Let $u_j \in L^{\infty}\big(\mathcal{S}_j\big)$ be a function. Then we define the infinity norm of $u_j$, denoted $\Vert u_j\Vert_{\infty}$, as \[\Vert u_j\Vert_{\infty}:= \text{ess}\sup_{\mathcal{S}_j} \vert u_j \vert.\] Additionally, let ${u}_j \in L^{\infty}\big(\mathcal{S}_j\big)$ for each  $j=1, \ldots, N$ be a family of functions and define $\bold{u}:= (u_1, u_2, \ldots, u_N)$. Then we define the infinity norm of $\bold{u}$, denoted $\Vert \bold{u}\Vert_{\infty}$ as \[\Vert \bold{u} \Vert_{\infty}:= \max_{j=1, \ldots, N} \text{ess}\sup_{\mathcal{S}_j} \vert u_j\vert.\]
	
	Finally, we remark that if $\bold{u}=(u_1, u_2, \ldots, u_N)$ is a continuous function, i.e., each $u_j, ~j=1, \ldots, N$ is continuous on $\mathcal{S}_j$ then we write $\bold{u} \in \Pi_{j=1}^N C^0(\mathcal{S}_j)$. Similarly, if each $u_j, ~j=1, \ldots, N$ is continuous on $\mathcal{S}^{\text{int}}_j$ then we write $\bold{u} \in \Pi_{j=1}^N C^0(\mathcal{S}^{\text{int}}_j)$.
	
	\subsubsection{Graph and layers of a domain}
	\begin{definition}[Graph of a domain $\Omega= \cup_{j=1}^N \Omega_j$]
		Consider a domain $\Omega= \cup_{j=1}^N \Omega_j$. We define the undirected graph $\mathcal{G}$ associated with the domain $\Omega$ as the set of vertices $\mathcal{V}$ given by $\mathcal{V}:= \{1, \ldots, N\}$,
		and the set of edges $E$ given by
		$E:= \left\{(i, j) \in \mathcal{V} \times \mathcal{V} \colon j \in N_i\right\}$. Furthermore, we say that the node $i \in \mathcal{V}$ is a {\rm{boundary node}} if $\partial \Omega_i \cap \partial \Omega$ is a set of measure greater than zero.
	\end{definition}
	
	The graph $\mathcal{G}$ associated with a domain $\Omega$ provides an easy and intuitive visualization of the connectivity of the domain. As we shall see, the convergence properties of the Schwarz method depend on the connectivity of the domain $\Omega$.

	\begin{definition}[Layers of a graph]
		Given a domain $\Omega= \cup_{j=1}^N \Omega_j$, consider the graph $\mathcal{G}$ associated with this domain. Then we define the layers of the graph $\mathcal{G}$ in an inductive manner as follows:
		\begin{enumerate}
			\item Layer $1$ is the set of all boundary nodes of $\mathcal{G}$. This set is denoted by $\mathcal{L}_1$.
			\item For any $j> 1$ we define the graph $\mathcal{G}_j$ iteratively as the set $\mathcal{W}_j:= \mathcal{V} - \cup_{k=1}^{j-1} \mathcal{L}_k$
			together with the associated set of edges. If $\mathcal{W}_j$ is non-empty, then Layer $j$ is the set $\mathcal{L}_j$ of all boundary nodes
			of $\mathcal{G}_j$.
		\end{enumerate}
		{Moreover, $N_{\max}$ denotes the total number of layers in the graph, and we say that the domain $\Omega$ has $N_{\max}$ layers.}
	\end{definition}

	An example of a domain and its decomposition into layers is shown in Figure \ref{fig:6}.
	\begin{figure}
		\centering
		\definecolor{zzzzzz}{rgb}{0.5,0.5,0.5}
		\definecolor{ffqqtt}{rgb}{1,0,0.2}
		\definecolor{qqqqcc}{rgb}{0,0,0.8}
		\definecolor{ttttff}{rgb}{0.2,0.2,1}
		\definecolor{ffqqqq}{rgb}{1,0,0}
		\definecolor{qqqqff}{rgb}{0,0,1}
		\begin{tikzpicture}[scale=1.7]
		\draw [dash pattern=on 3pt off 1pt,line width=0.9pt,color=zzzzzz] (0.8,1.49)-- (1.2,1.49);
		\draw [dash pattern=on 3pt off 1pt,line width=0.9pt,color=zzzzzz] (1.2,1.49)-- (1,1.84);
		\draw [dash pattern=on 3pt off 1pt,line width=0.9pt,color=zzzzzz] (1.2,1.49)-- (1.4,1.84);
		\draw [dash pattern=on 3pt off 1pt,line width=0.9pt,color=zzzzzz] (1.2,1.49)-- (1.6,1.49);
		\draw [dash pattern=on 3pt off 1pt,line width=0.9pt,color=zzzzzz] (1,1.15)-- (1.2,1.49);
		\draw [dash pattern=on 3pt off 1pt,line width=0.9pt,color=zzzzzz] (1,1.15)-- (1.4,1.15);
		\draw [dash pattern=on 3pt off 1pt,line width=0.9pt,color=zzzzzz] (1.4,1.15)-- (1.6,1.49);
		\draw [dash pattern=on 3pt off 1pt,line width=0.9pt,color=zzzzzz] (0.8,0.8)-- (1.2,0.8);
		\draw [dash pattern=on 3pt off 1pt,line width=0.9pt,color=zzzzzz] (1.2,0.8)-- (1,1.15);
		\draw [dash pattern=on 3pt off 1pt,line width=0.9pt,color=zzzzzz] (1,0.45)-- (1.2,0.8);
		\draw [dash pattern=on 3pt off 1pt,line width=0.9pt,color=zzzzzz] (1,0.45)-- (1.4,0.45);
		\draw [dash pattern=on 3pt off 1pt,line width=0.9pt,color=zzzzzz] (1.2,0.11)-- (1.4,0.45);
		\draw [dash pattern=on 3pt off 1pt,line width=0.9pt,color=zzzzzz] (1.4,0.45)-- (1.6,0.11);
		\draw [dash pattern=on 3pt off 1pt,line width=0.9pt,color=zzzzzz] (1.8,0.45)-- (1.6,0.11);
		\draw [dash pattern=on 3pt off 1pt,line width=0.9pt,color=zzzzzz] (1.8,0.45)-- (2,0.11);
		\draw [dash pattern=on 3pt off 1pt,line width=0.9pt,color=zzzzzz] (1.8,0.45)-- (2.2,0.45);
		\draw [dash pattern=on 3pt off 1pt,line width=0.9pt,color=zzzzzz] (2,0.8)-- (2.2,0.45);
		\draw [dash pattern=on 3pt off 1pt,line width=0.9pt,color=zzzzzz] (2,0.8)-- (2.4,0.8);
		\draw [dash pattern=on 3pt off 1pt,line width=0.9pt,color=zzzzzz] (1.8,1.15)-- (2.2,1.15);
		\draw [dash pattern=on 3pt off 1pt,line width=0.9pt,color=zzzzzz] (2.2,1.15)-- (2,0.8);
		\draw [dash pattern=on 3pt off 1pt,line width=0.9pt,color=zzzzzz] (1.2,0.8)-- (1.6,0.8);
		\draw [dash pattern=on 3pt off 1pt,line width=0.9pt,color=zzzzzz] (1.6,0.8)-- (1.4,0.45);
		\draw [dash pattern=on 3pt off 1pt,line width=0.9pt,color=zzzzzz] (1.6,0.8)-- (1.8,0.45);
		\draw [dash pattern=on 3pt off 1pt,line width=0.9pt,color=zzzzzz] (1.6,0.8)-- (2,0.8);
		\draw [dash pattern=on 3pt off 1pt,line width=0.9pt,color=zzzzzz] (1.6,0.8)-- (1.4,1.15);
		\draw [dash pattern=on 3pt off 1pt,line width=0.9pt,color=zzzzzz] (1.6,0.8)-- (1.8,1.15);
		\draw [dash pattern=on 3pt off 1pt,line width=0.9pt,color=zzzzzz] (1.6,1.49)-- (1.8,1.15);
		\draw [dash pattern=on 3pt off 1pt,line width=0.9pt,color=zzzzzz] (1.8,1.15)-- (2,1.49);
		\draw [dash pattern=on 3pt off 1pt,line width=0.9pt,color=zzzzzz] (2,1.49)-- (2.2,1.15);
		\draw [dash pattern=on 3pt off 1pt,line width=0.9pt,color=zzzzzz] (2.4,1.49)-- (2,1.49);
		\draw [dash pattern=on 3pt off 1pt,line width=0.9pt,color=zzzzzz] (2.2,1.84)-- (2.4,1.49);
		\draw [dash pattern=on 3pt off 1pt,line width=0.9pt,color=zzzzzz] (2.4,1.49)-- (2.6,1.84);
		\draw [color=ttttff] (0.8,0.8) circle (0.26cm);
		\draw [color=ttttff] (1,0.45) circle (0.26cm);
		\draw [color=ttttff] (1.2,0.11) circle (0.26cm);
		\draw [color=ffqqqq] (1.4,0.45) circle (0.26cm);
		\draw [color=ttttff] (1.6,0.11) circle (0.26cm);
		\draw [color=ttttff] (2,0.11) circle (0.26cm);
		\draw [color=ffqqqq] (1.8,0.45) circle (0.26cm);
		\draw [color=ttttff] (2.2,0.45) circle (0.26cm);
		\draw [color=ttttff] (2.4,0.8) circle (0.26cm);
		\draw [color=ffqqqq] (2,0.8) circle (0.26cm);
		\draw(1.6,0.8) circle (0.26cm);
		\draw [color=ffqqqq] (1.2,0.8) circle (0.26cm);
		\draw [color=ttttff] (1,1.15) circle (0.26cm);
		\draw [color=ttttff] (0.8,1.49) circle (0.26cm);
		\draw [color=ffqqqq] (1.2,1.49) circle (0.26cm);
		\draw [color=ttttff] (1,1.84) circle (0.26cm);
		\draw [color=ttttff] (1.4,1.84) circle (0.26cm);
		\draw [color=ttttff] (1.6,1.49) circle (0.26cm);
		\draw [color=ffqqqq] (1.4,1.15) circle (0.26cm);
		\draw [color=ffqqqq] (1.8,1.15) circle (0.26cm);
		\draw [color=ttttff] (2,1.49) circle (0.26cm);
		\draw [color=ttttff] (2.4,1.49) circle (0.26cm);
		\draw [color=ttttff] (2.2,1.84) circle (0.26cm);
		\draw [color=ttttff] (2.8,1.49) circle (0.26cm);
		\draw [color=ttttff] (2.2,1.15) circle (0.26cm);
		\draw [color=ttttff] (2.6,1.84) circle (0.26cm);
		\draw [color=ttttff] (3,1.15) circle (0.26cm);
		\draw [color=ttttff] (3.4,1.15) circle (0.26cm);
		\draw [color=ttttff] (0.4,0.8) circle (0.26cm);
		\draw [color=ttttff] (2.8,0.8) circle (0.26cm);
		\draw [line width=1.3pt,color=ttttff] (2.4,0.8)-- (2.8,0.8);
		\draw [line width=1.3pt,color=ttttff] (2.8,0.8)-- (3,1.15);
		\draw [line width=1.3pt,color=ttttff] (1,1.84)-- (0.8,1.49);
		\draw [line width=1.3pt,color=ttttff] (0.8,1.49)-- (1,1.15);
		\draw [line width=1.3pt,color=ttttff] (1,1.15)-- (0.8,0.8);
		\draw [line width=1.3pt,color=ttttff] (0.8,0.8)-- (1,0.45);
		\draw [line width=1.3pt,color=ttttff] (1,0.45)-- (1.2,0.11);
		\draw [line width=1.3pt,color=ttttff] (1.2,0.11)-- (1.6,0.11);
		\draw [line width=1.3pt,color=ttttff] (1.6,0.11)-- (2,0.11);
		\draw [line width=1.3pt,color=ttttff] (2,0.11)-- (2.2,0.45);
		\draw [line width=1.3pt,color=ttttff] (2.2,0.45)-- (2.4,0.8);
		\draw [line width=1.3pt,color=ttttff] (2.4,0.8)-- (2.2,1.15);
		\draw [line width=1.3pt,color=ttttff] (2.2,1.15)-- (2.4,1.49);
		\draw [line width=1.3pt,color=ttttff] (2.4,1.49)-- (2.8,1.49);
		\draw [line width=1.3pt,color=ttttff] (2.8,1.49)-- (2.6,1.84);
		\draw [line width=1.3pt,color=ttttff] (2.6,1.84)-- (2.2,1.84);
		\draw [line width=1.3pt,color=ttttff] (2.2,1.84)-- (2,1.49);
		\draw [line width=1.3pt,color=ttttff] (2,1.49)-- (1.6,1.49);
		\draw [line width=1.3pt,color=ttttff] (1.6,1.49)-- (1.4,1.84);
		\draw [line width=1.3pt,color=ttttff] (1.4,1.84)-- (1,1.84);
		\draw [line width=1.3pt,color=ttttff] (0.4,0.8)-- (0.8,0.8);
		\draw [line width=1.3pt,color=ttttff] (2.8,1.49)-- (3,1.15);
		\draw [line width=1.3pt,color=ttttff] (3,1.15)-- (3.4,1.15);
		\draw [line width=1.3pt,color=ffqqqq] (1.4,1.15)-- (1.2,0.8);
		\draw [line width=1.3pt,color=ffqqqq] (1.2,0.8)-- (1.4,0.45);
		\draw [line width=1.3pt,color=ffqqqq] (1.4,0.45)-- (1.8,0.45);
		\draw [line width=1.3pt,color=ffqqqq] (1.8,0.45)-- (2,0.8);
		\draw [line width=1.3pt,color=ffqqqq] (2,0.8)-- (1.8,1.15);
		\draw [line width=1.3pt,color=ffqqqq] (1.8,1.15)-- (1.4,1.15);
		\draw [line width=1.3pt,color=ffqqqq] (1.2,1.49)-- (1.4,1.15);
		
		\begin{footnotesize}
		\draw [<-] (-0.21,1.29)-- (-0.03,1.72);
		\draw[black] (-0.03,1.92) node {pending};
		\draw[black] (-0.03,1.80) node {subdomain};
		\draw [<-] (-3.2,0.63)-- (-3.16,0.28);
		\draw[black] (-3.16,0.21) node {pending};
		\draw[black] (-3.16,0.09) node {subdomain};
		\draw [<-] (-0.97,1.08)-- (-0.74,0.28);
		\draw[black] (-0.74,0.20) node {a ``hole''};
		\end{footnotesize}
		
		\draw(-0.8,0.8) circle (0.26cm);
		\draw(-2.8,0.8) circle (0.26cm);
		\draw(-2.6,0.45) circle (0.26cm);
		\draw(-2.4,0.11) circle (0.26cm);
		\draw(-2.2,0.45) circle (0.26cm);
		\draw(-2,0.11) circle (0.26cm);
		\draw(-1.6,0.11) circle (0.26cm);
		\draw(-1.8,0.45) circle (0.26cm);
		\draw(-1.4,0.45) circle (0.26cm);
		\draw(-1.2,0.8) circle (0.26cm);
		\draw(-1.6,0.8) circle (0.26cm);
		\draw(-2,0.8) circle (0.26cm);
		\draw(-2.4,0.8) circle (0.26cm);
		\draw(-2.6,1.15) circle (0.26cm);
		\draw(-2.8,1.49) circle (0.26cm);
		\draw(-2.4,1.49) circle (0.26cm);
		\draw(-2.6,1.84) circle (0.26cm);
		\draw(-2.2,1.84) circle (0.26cm);
		\draw(-2,1.49) circle (0.26cm);
		\draw(-2.2,1.15) circle (0.26cm);
		\draw(-1.8,1.15) circle (0.26cm);
		\draw(-1.6,1.49) circle (0.26cm);
		\draw(-1.2,1.49) circle (0.26cm);
		\draw(-1.4,1.84) circle (0.26cm);
		\draw(-0.8,1.49) circle (0.26cm);
		\draw(-1.4,1.15) circle (0.26cm);
		\draw(-1,1.84) circle (0.26cm);
		\draw(-0.6,1.15) circle (0.26cm);
		\draw(-0.2,1.15) circle (0.26cm);
		\draw(-3.2,0.8) circle (0.26cm);
		\draw (-3.2,0.8)-- (-2.8,0.8);
		\draw (-0.8,1.49)-- (-0.6,1.15);
		\draw (-0.6,1.15)-- (-0.2,1.15);
		\draw (-2.4,1.49)-- (-2.2,1.15);
		\draw (-2.8,1.49)-- (-2.4,1.49);
		\draw (-2.4,1.49)-- (-2.6,1.84);
		\draw (-2.4,1.49)-- (-2.2,1.84);
		\draw (-2.4,1.49)-- (-2,1.49);
		\draw (-2.6,1.15)-- (-2.4,1.49);
		\draw (-2.6,1.15)-- (-2.2,1.15);
		\draw (-2.2,1.15)-- (-2,1.49);
		\draw (-2.8,0.8)-- (-2.4,0.8);
		\draw (-2.4,0.8)-- (-2.6,1.15);
		\draw (-2.6,0.45)-- (-2.4,0.8);
		\draw (-2.6,0.45)-- (-2.2,0.45);
		\draw (-2.4,0.11)-- (-2.2,0.45);
		\draw (-2.2,0.45)-- (-2,0.11);
		\draw (-1.8,0.45)-- (-2,0.11);
		\draw (-1.8,0.45)-- (-1.6,0.11);
		\draw (-1.8,0.45)-- (-1.4,0.45);
		\draw (-1.6,0.8)-- (-1.4,0.45);
		\draw (-1.6,0.8)-- (-1.2,0.8);
		\draw (-1.8,1.15)-- (-1.4,1.15);
		\draw (-1.4,1.15)-- (-1.6,0.8);
		\draw (-2.4,0.8)-- (-2,0.8);
		\draw (-2,0.8)-- (-2.2,0.45);
		\draw (-2,0.8)-- (-1.8,0.45);
		\draw (-2,0.8)-- (-1.6,0.8);
		\draw (-2,0.8)-- (-2.2,1.15);
		\draw (-2,0.8)-- (-1.8,1.15);
		\draw (-2,1.49)-- (-1.8,1.15);
		\draw (-1.8,1.15)-- (-1.6,1.49);
		\draw (-1.6,1.49)-- (-1.4,1.15);
		\draw (-1.2,1.49)-- (-1.6,1.49);
		\draw (-1.4,1.84)-- (-1.2,1.49);
		\draw (-1.2,1.49)-- (-1,1.84);
		\draw (-2.6,1.15)-- (-2.8,1.49);
		\draw (-2.8,1.49)-- (-2.6,1.84);
		\draw (-2.6,1.84)-- (-2.2,1.84);
		\draw (-2.2,1.84)-- (-2,1.49);
		\draw (-2,1.49)-- (-1.6,1.49);
		\draw (-1.6,1.49)-- (-1.4,1.84);
		\draw (-1.4,1.84)-- (-1,1.84);
		\draw (-1,1.84)-- (-0.8,1.49);
		\draw (-0.8,1.49)-- (-1.2,1.49);
		\draw (-1.4,1.15)-- (-1.2,1.49);
		\draw (-1.4,1.15)-- (-1.2,0.8);
		\draw (-1.2,0.8)-- (-1.4,0.45);
		\draw (-1.4,0.45)-- (-1.6,0.11);
		\draw (-1.6,0.11)-- (-2,0.11);
		\draw (-2.8,0.8)-- (-2.6,0.45);
		\draw (-2.8,0.8)-- (-2.6,1.15);
		\draw (-2.4,0.8)-- (-2.2,1.15);
		\draw (-2.2,1.15)-- (-1.8,1.15);
		\draw (-1.8,1.15)-- (-1.6,0.8);
		\draw (-1.6,0.8)-- (-1.8,0.45);
		\draw (-2.6,0.45)-- (-2.4,0.11);
		\draw (-2.4,0.11)-- (-2,0.11);
		\draw (-2.2,0.45)-- (-1.8,0.45);
		\draw (-2.4,0.8)-- (-2.2,0.45);
		\draw (-1.2,0.8)-- (-0.8,0.8);
		\draw (-0.8,0.8)-- (-0.6,1.15);
		\begin{scriptsize}
		\fill [color=qqqqff] (0.8,0.8) circle (1.8pt);
		\fill [color=ffqqqq] (1.2,0.8) circle (1.8pt);
		\fill [color=qqqqff] (1,1.15) circle (1.8pt);
		\fill [color=black] (1.6,0.8) circle (1.8pt);
		\fill [color=ffqqqq] (1.4,1.15) circle (1.8pt);
		\fill [color=ffqqqq] (1.2,1.49) circle (1.8pt);
		\fill [color=ffqqqq] (1.8,1.15) circle (1.8pt);
		\fill [color=qqqqff] (1.6,1.49) circle (1.8pt);
		\fill [color=ffqqqq] (2,0.8) circle (1.8pt);
		\fill [color=qqqqff] (2.2,1.15) circle (1.8pt);
		\fill [color=qqqqff] (2,1.49) circle (1.8pt);
		\fill [color=qqqqff] (2.4,0.8) circle (1.8pt);
		\fill [color=qqqqff] (2.4,1.49) circle (1.8pt);
		\fill [color=qqqqff] (2.2,1.84) circle (1.8pt);
		\fill [color=qqqqff] (1.4,1.84) circle (1.8pt);
		\fill [color=qqqqff] (2.8,1.49) circle (1.8pt);
		\fill [color=qqqqff] (2.6,1.84) circle (1.8pt);
		\fill [color=qqqqff] (3,1.15) circle (1.8pt);
		\fill [color=qqqqff] (1,0.45) circle (1.8pt);
		\fill [color=ffqqqq] (1.4,0.45) circle (1.8pt);
		\fill [color=ffqqqq] (1.8,0.45) circle (1.8pt);
		\fill [color=qqqqff] (1.2,0.11) circle (1.8pt);
		\fill [color=qqqqff] (0.8,1.49) circle (1.8pt);
		\fill [color=qqqqff] (1,1.84) circle (1.8pt);
		\fill [color=qqqqff] (1.6,0.11) circle (1.8pt);
		\fill [color=qqqqff] (2.2,0.45) circle (1.8pt);
		\fill [color=qqqqff] (2,0.11) circle (1.8pt);
		\fill [color=qqqqff] (3.4,1.15) circle (1.8pt);
		\fill [color=qqqqff] (0.4,0.8) circle (1.8pt);
		\fill [color=black] (-2.8,0.8) circle (1.5pt);
		\fill [color=black] (-2.4,0.8) circle (1.5pt);
		\fill [color=black] (-2.6,1.15) circle (1.5pt);
		\fill [color=black] (-2,0.8) circle (1.5pt);
		\fill [color=black] (-2.2,1.15) circle (1.5pt);
		\fill [color=black] (-2.4,1.49) circle (1.5pt);
		\fill [color=black] (-1.8,1.15) circle (1.5pt);
		\fill [color=black] (-2,1.49) circle (1.5pt);
		\fill [color=black] (-1.6,0.8) circle (1.5pt);
		\fill [color=black] (-1.4,1.15) circle (1.5pt);
		\fill [color=black] (-1.6,1.49) circle (1.5pt);
		\fill [color=black] (-1.2,0.8) circle (1.5pt);
		\fill [color=black] (-1.2,1.49) circle (1.5pt);
		\fill [color=black] (-1.4,1.84) circle (1.5pt);
		\fill [color=black] (-2.2,1.84) circle (1.5pt);
		\fill [color=black] (-0.8,1.49) circle (1.5pt);
		\fill [color=black] (-1,1.84) circle (1.5pt);
		\fill [color=black] (-0.6,1.15) circle (1.5pt);
		\fill [color=black] (-2.6,0.45) circle (1.5pt);
		\fill [color=black] (-2.2,0.45) circle (1.5pt);
		\fill [color=black] (-1.8,0.45) circle (1.5pt);
		\fill [color=black] (-2.4,0.11) circle (1.5pt);
		\fill [color=black] (-2.8,1.49) circle (1.5pt);
		\fill [color=black] (-2.6,1.84) circle (1.5pt);
		\fill [color=black] (-2,0.11) circle (1.5pt);
		\fill [color=black] (-1.4,0.45) circle (1.5pt);
		\fill [color=black] (-1.6,0.11) circle (1.5pt);
		\fill [color=black] (-0.2,1.15) circle (1.5pt);
		\fill [color=black] (-3.2,0.8) circle (1.5pt);
		\fill [color=black] (-0.8,0.8) circle (1.5pt);
		\fill [color=black] (-0.6,1.15) circle (1.5pt);
		\fill [color=qqqqff] (2.8,0.8) circle (1.8pt);
		\fill [color=qqqqff] (3,1.15) circle (1.8pt);
		\end{scriptsize}
		\end{tikzpicture}
		\caption{Left: An example of a domain {$\Omega= \cup_{j=1}^N \Omega_j$ (as a collection of disks)} and its corresponding
			graph (black nodes and edges). Notice the presence of two ``pending subdomains''
			and a ``hole''. Right: Layers corresponding to the left figure. In particular,
			the blue nodes represent $\mathcal{L}_1$ (Layer 1), the red nodes $\mathcal{L}_2$ (Layer 2), and the black node $\mathcal{L}_3$ (Layer 3).}
		\label{fig:6}
	\end{figure}
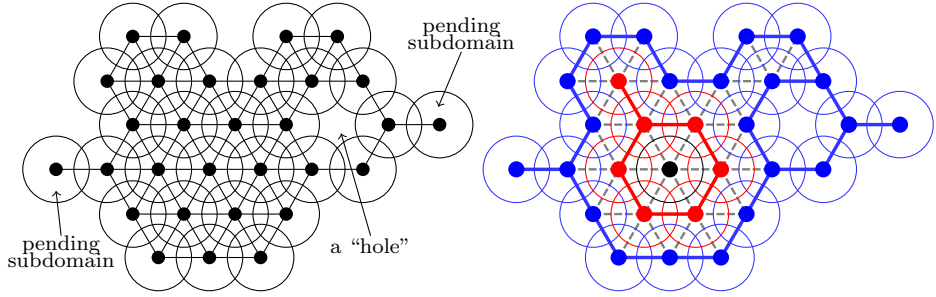
	
	\begin{definition}
		Let $\Omega= \cup_{j=1}^N \Omega_j$ be a domain with $N_{\max}$ layers. Then for any natural number $n\leq N_{\max}$ we define the sets $\mathcal{V}_n$ and $\mathcal{C}_n$ as follows:
		\begin{equation*}
		\begin{split}
		\mathcal{V}_n &:= \left\{ \bold{v} \in \Pi_{j=1}^N C^0(\mathcal{S}^{\Int}_j) \colon \Vert \bold{v}\Vert_{\infty}\leq1 \text{ and }\begin{cases}
		\bold{v}_j(x) < 1 \; &\forall x \in \mathcal{S}_j^{\Int}, \forall j \in \cup_{k=1}^n\mathcal{L}_k,\\
		\bold{v}_j =1 &~\quad \text{on } \mathcal{S}_j,\forall j \in \cup_{k=n+1}^{N_{\max}}\mathcal{L}_k.
		\end{cases}
		\right\}, \\
		\mathcal{C}_n &:= \Big\{ \bold{v} \in \Pi_{j=1}^N C^0(\mathcal{S}^{\Int}_j) \colon \Vert \bold{v}\Vert_{\infty}\leq1 \text{ and } \text{ess}\sup_{\mathcal{S}_j} \bold{v}_j < 1, ~\forall j \in \cup_{k=1}^n\mathcal{L}_k\Big\}.
		\end{split}
		\end{equation*}
	\end{definition}
	
	Intuitively, a non-negative function $\bold{v} \in \Pi_{j=1}^N C^0(\mathcal{S}_j)$ is in the set $\mathcal{V}_n$ if $\bold{v}_j(x) < 1$ for all points $x$ on the \emph{interior skeleton} of all subdomains $\Omega_j$ that belong to the first $n$ layers of the domain. Notice that there is no constraint on the behaviour of the function $\bold{v}_j$ at the endpoints of the skeleton $\mathcal{S}_j$. Therefore, in some sense, the function $\bold{v}$ has \emph{begun to experience a contraction} on all subdomains in the first $n$ layers, but it cannot yet be claimed that the function $\bold{v}$ has infinity norm strictly smaller than one on the first $n$ layers.
	
	On the other hand, a non-negative function $\bold{w}$ in $\Pi_{j=1}^N C^0(\mathcal{S}^{\rm int}_j)$ is in the set $\mathcal{C}_n$ if $\text{ess}\sup_{\mathcal{S}_j} \bold{w}_j < 1$ on all subdomains $\Omega_j$ that belong to the first $n$ layers of the domain. Therefore, the function $\bold{w}$ has \emph{already contracted} on all subdomains in the first $n$ layers and thus has infinity norm less than one on these subdomains. 
	
	\subsubsection{Partition of unity, extension and restriction operators}
	Let $j \in \{1, \ldots, N\}$, we define for each $k \in N_j$ a function $\chi_j^{k} \colon \partial \Omega_j \rightarrow \mathbb{R}$, continuous on $\text{int} \big(\Gamma_j^{\Int}\big)$, with the property that
	\begin{equation}\label{eq:PU}
	\chi_j^{k}:= \begin{cases}
	1 \quad &\text{on } \overline{\Gamma_j^{k, 0}} \setminus \Gamma_j^{\Ext},~~~~~~\\
	\in [0, 1] \quad &\text{on } \overline{{\Gamma_j^{k, i}}} \setminus \Gamma_j^{\Ext} \quad ~~~~~~\text{for } i \in N_{j, k} \text{ with } i\neq 0,\\
	0 \quad &\text{otherwise},
	\end{cases}
	\end{equation}
	and such that
	\begin{equation}\label{eq:new}
	\sum_{k \in {N}_j} \chi_j^{k}(x)=1 \quad \text{ for all } x \in \text{int}\big(\Gamma_{j}^{{\Int}}\big).
	\end{equation}
	We say that $\{\chi_j^{k}\}_{\substack{k \in {N}_j}}$ are the partition of unity functions on $\Gamma_{j}^{{\Int}}$.
	
	{
		\begin{remark}\label{new:remark}
			Let us consider a general domain $\Omega= \cup_{i=1}^N \Omega_j$. If for all $j =1, \ldots, N$, no set $\Gamma_j^{\rm ext}$ has an isolated point, then we can readily use Equation \eqref{eq:PU} to define the partition of unity functions. In the pathological case where some set $\Gamma_j^{\rm ext}$ has an isolated point (see Figure \ref{fig:forHassan}), Equation \eqref{eq:PU} does not provide a correct function definition. In this pathological situation, we can modify Definition \eqref{eq:PU} by setting at least one partition of unity function to be non-zero at this isolated point. This modification preserves the continuity requirement on the partition of unity functions.
		\end{remark}
	}
	\begin{figure}[h]
		\centering
		\begin{tikzpicture}[scale=0.75]
		\draw(29.5,12) circle (1.56cm);
		\draw(27.5,12) circle (1.56cm);
		\draw(28.5,11.64) circle (1.56cm);
		\begin{scriptsize}
		\fill (28.5,13.2) circle (1.5pt);
		\draw (28.5,13.4) node {$P$};
		\draw (28.5,10.4) node {$\Omega_1$};
		\draw (30.5,12.5) node {$\Omega_2$};
		\draw (26.5,12.5) node {$\Omega_3$};
		\end{scriptsize}
		
		\end{tikzpicture}
		\caption{ An example of a pathological geometry where the point $P$ is an isolated point of $\Gamma_1^{\rm ext}$. In this case, one must slightly modify the definitions of the partition of unity functions $\chi_1^2$ and $\chi_1^3$ at the point $P$ in order to ensure that both functions are well-defined and continuous on \text{int}$\big(\Gamma_1^{\rm int}\big)$.}
		\label{fig:forHassan}
	\end{figure}
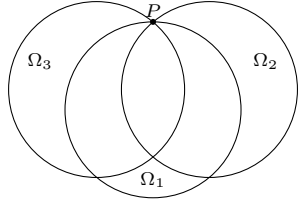
	
	\begin{remark}\label{rem:1}
		Consider a subdomain $\Omega_j$ and the partition of unity functions $\{\chi_j^{k}\}_{\substack{k \in {N}_j}}$. It follows from the definition of these functions that
		\begin{equation*}
		\sum_{k \in {N}_j} \chi_j^{k}(x)=0 \quad \text{ for all } x \in \Gamma_{j}^{\text{{\rm ext}}}.
		\end{equation*}
		
		Consequently, there is a subset of the boundary $\partial \Omega_j$ with non-zero Lebesgue measure on which all partition of unity functions are zero if and only if $\partial \Omega_j \cap \partial \Omega$ is a set of measure greater than zero.
	\end{remark}
	
	\begin{remark}\label{rem:0}
		Consider the partition of unity functions defined through Equation \eqref{eq:PU}. Two natural questions arise: 1) Is the assumption of continuity that we have imposed on the partition of unity functions truly necessary? 2) Does {the choice} of the partition of unity functions affect the iterates of the Parallel Schwarz method and the asymptotic contraction factor? It can indeed be verified that the proofs of all lemmas in Section \ref{sec:Technical} still hold if we drop the continuity assumption on the partition of unity functions. On the other hand, numerical experiments in Section \ref{sec:num} reveal that different choices of the partition of unity functions can lead to quantitatively slightly different Schwarz iterates.
	\end{remark}
	
	Next, we define our harmonic extension operators. To this end, let $\Gamma_j \subseteq \partial \Omega_j$ be a non-empty open set. Then we define the mapping $\mathcal{E}_j \colon L^2(\Gamma_j) \rightarrow C^0(\Omega_j)$ as the operator with the property that given any input function $u_j \in L^2(\Gamma_j)$, the output function $w_j:=\mathcal{E}_j (u_j) \in C^0(\Omega_j)$ is the unique solution to the {Dirichlet problem}
	\begin{align*}
	\Delta w_j &=0 &&\text{ in $\Omega_j$},\\
	w_j&=u_j &&\text{ on $\Gamma_j$},\\
	w_j&=0 &&\text{ on $\partial \Omega_j \setminus\Gamma_j$}.
	\end{align*}

	Finally, we define our restriction operators. Let $C^{0, \text{pc}}(\overline{\Omega_j})$ denote the set of functions on $\overline{\Omega_j}$ that are continuous on $\Omega_j$ and \emph{piecewise continuous} on the boundary $\partial \Omega_j$. {Then, we define the mapping $\mathcal{R}_j \colon C^{0, \text{pc}}(\overline{\Omega_j})\rightarrow C^0(\mathcal{S}_j^{\text{int}}) \cap L^{\infty}(\mathcal{S}_j)$ as}
	\begin{equation*}
	\mathcal{R}_j \colon u \mapsto \mathcal{R}_j(u) = u \vert_{\mathcal{S}_j},
	\end{equation*}
	and we say that $\mathcal{R}_j$ is the restriction operator on $\Omega_j$.
	
	Intuitively, the map $\mathcal{R}_j$ takes as input a continuous function ${v}_j$ defined on $\Omega_j$ with piecewise continuous values on the boundary $\partial \Omega_j$ and gives as output the restriction of this function on the skeleton $\mathcal{S}_j$. Notice that we cannot claim a priori that the output function $\mathcal{R}_j({v}_j)$ is continuous on the entire skeleton $\mathcal{S}_j$ since it is possible that there is a jump discontinuity at the endpoints of the skeleton $\mathcal{S}_j$ which lie on the boundary $\partial \Omega_j$.
	
	In what follows, we will often consider the composition $\mathcal{R}_j \big(\mathcal{E}_j (\lambda)\big)$ where $\lambda \in L^2(\Gamma_j)$ and $\Gamma_j \subset \partial \Omega_j$. We remark that this composition is well defined if $\lambda$ is a piecewise continuous function. A schematic representation of the mappings $\mathcal{E}_j$ and $\mathcal{R}_j$ is given in Figure \ref{fig:4}.
	\begin{figure}[h]
		\centering
		\begin{tikzpicture}[scale=0.6]
		\draw [fill=black,fill opacity=0.4] (-1,5) circle (1.56cm);
		\draw [dash pattern=on 3pt off 3pt] (5,5) circle (1.56cm);
		\draw [shift={(2.85,6.27)}] plot[domain=-1.18:0.11,variable=\t]({1*1.56*cos(\t r)+0*1.56*sin(\t r)},{0*1.56*cos(\t r)+1*1.56*sin(\t r)});
		\draw [shift={(2.83,3.77)}] plot[domain=-0.13:1.16,variable=\t]({1*1.56*cos(\t r)+0*1.56*sin(\t r)},{0*1.56*cos(\t r)+1*1.56*sin(\t r)});
		\draw [shift={(4.98,2.51)}] plot[domain=0.92:2.21,variable=\t]({1*1.56*cos(\t r)+0*1.56*sin(\t r)},{0*1.56*cos(\t r)+1*1.56*sin(\t r)});
		\draw [shift={(7.16,3.74)}] plot[domain=1.97:3.25,variable=\t]({1*1.56*cos(\t r)+0*1.56*sin(\t r)},{0*1.56*cos(\t r)+1*1.56*sin(\t r)});
		\draw [shift={(7.18,6.24)}] plot[domain=3.02:4.3,variable=\t]({1*1.56*cos(\t r)+0*1.56*sin(\t r)},{0*1.56*cos(\t r)+1*1.56*sin(\t r)});
		\draw [shift={(5.02,7.51)}] plot[domain=4.06:5.34,variable=\t]({1*1.56*cos(\t r)+0*1.56*sin(\t r)},{0*1.56*cos(\t r)+1*1.56*sin(\t r)});
		\draw [shift={(-7,5)},line width=2.8pt,color=gray]  plot[domain=1.15:1.98,variable=\t]({1*1.56*cos(\t r)+0*1.56*sin(\t r)},{0*1.56*cos(\t r)+1*1.56*sin(\t r)});
		\draw(-7,5) circle (1.56cm);
		\draw[black] (-4,5.3) node {$\mathcal{E}_j$};
		\draw[|->] (-5,5)-- (-3,5);
		\draw[black] (2,5.3) node {$\mathcal{R}_j$};
		\draw[|->] (1,5)-- (3,5);
		\draw[black] (-7,3) node {$\partial \Omega_j$};
		\draw[black] (-1,3) node {$\overline{\Omega}_j$};
		\draw[black] (5,3) node {$\mathcal{S}_j$};
		\draw[black] (-7,6) node {$u_j$};
		\draw[black] (-7,7) node {$\Gamma_j$};
		\draw[black] (-1,5) node {$w_j:=\mathcal{E}_j(u_j)$};
		\draw[black] (5,5) node {$\mathcal{R}_j(w_j)$};
		\fill [color=black] (3.45,5.2) circle (2.1pt);
		\fill [color=black] (3.45,4.83) circle (2.1pt);
		\fill [color=black] (3.77,5.02) circle (2.1pt);
		\fill [color=black] (4.05,3.76) circle (2.1pt);
		\fill [color=black] (4.38,3.95) circle (2.1pt);
		\fill [color=black] (4.38,3.57) circle (2.1pt);
		\fill [color=black] (5.61,3.56) circle (2.1pt);
		\fill [color=black] (5.61,3.94) circle (2.1pt);
		\fill [color=black] (5.93,3.75) circle (2.1pt);
		\fill [color=black] (6.55,4.81) circle (2.1pt);
		\fill [color=black] (6.24,5) circle (2.1pt);
		\fill [color=black] (6.55,5.18) circle (2.1pt);
		\fill [color=black] (5.94,6.24) circle (2.1pt);
		\fill [color=black] (5.63,6.43) circle (2.1pt);
		\fill [color=black] (5.63,6.07) circle (2.1pt);
		\fill [color=black] (4.4,6.44) circle (2.1pt);
		\fill [color=black] (4.4,6.08) circle (2.1pt);
		\fill [color=black] (4.08,6.26) circle (2.1pt);
		\end{tikzpicture}
		\caption{Representation of the maps $\mathcal{E}_j$ and $\mathcal{R}_j$:
			a function $u_j: \Gamma_j \rightarrow \mathbb{R}$ is extended
			harmonically in $\Omega_j$ by $\mathcal{E}_j$.
			The harmonic extension $w_j:=\mathcal{E}_j(u_j)$ is then restricted
			to the skeleton $\mathcal{S}_j$ by the map $\mathcal{R}_j$.}
		\label{fig:4}
	\end{figure}
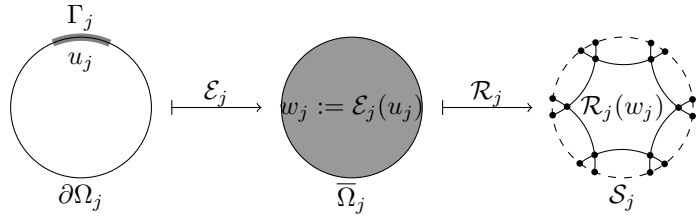
	
	\newpage
	\subsection{The Schwarz method and the operator formulation}\label{sec:2.2}
	We wish to apply the {parallel Schwarz method (PSM)} in order to obtain the solution to Equation \eqref{eq:1}. To this end, we consider the natural decomposition of the domain $\Omega$ into the $N$ subdomains~$\{\Omega_i\}_{i=1}^N$. Then for each $j \in \{1, \ldots, N\}$ we find the function $u_j \in L^{\infty}(\Omega_j)$ such that 
	\begin{align}\nonumber
	\Delta u_j &=0 \quad &&\text{in }\Omega_j,\\ 
	u_j&=g \quad && \text{on } \Gamma_j^{\text{ext}},\label{eq:2} \\ 
	u_j&=\sum_{i \in N_j}u_i\chi_j^{i} \quad &&\text{in } \partial \Omega_j \setminus \Gamma_j^{{\Ext}}. \nonumber
	\end{align}
	Intuitively {and according to \eqref{eq:2},} we look for harmonic functions $u_j$ on each subdomain 
	$\Omega_j$ that satisfy appropriate boundary conditions. {On the exterior boundary we impose the true
		data $g$, while on the interior boundary we impose Dirichlet data, which is traced from the neighbouring subdomain solutions and multiplied by our partition of unity functions.}
	
	{The equivalence between the global Laplace problem \eqref{eq:1} and the domain decomposition problem \eqref{eq:2} will be proved later in Section \ref{sec:ConvergenceResults} Theorem \ref{lemma:2.1}.}
	
	Using {\eqref{eq:2}}, we obtain the following implementation of the PSM: Let $u_0 \colon \overline{\Omega} \rightarrow \mathbb{R}$ be some continuous initialization. For each $j \in \{1, \ldots, N\}$ find a sequence of functions $\{u^n_j\}_{n \in \mathbb{N}} \in L^{\infty}(\Omega_j)$ such that $u_j^0= u_0\vert_{\Omega_j}$ and for each $n \in \mathbb{N}$ it holds that
	\begin{align}
	\nonumber \Delta u^{n+1}_j &=0 \quad && \text{in } \Omega_j,\\ 
	u^{n+1}_j&=g \quad &&\text{on }\Gamma_j^{\text{ext}}, \label{eq:3} \\ \nonumber
	u^{n+1}_j&=\sum_{i \in N_j}u^n_i\chi_j^{i} \quad &&\text{in } \partial \Omega_j \setminus \Gamma_j^{{\Ext}}.
	\end{align}
	
	In order to analyse the sequence of functions $\{u^n_j\}_{n \in \mathbb{N}}$ we consider the error equation associated with \eqref{eq:3}. To this end, we define for each $j \in \{1, \ldots, N\}$ and each $n \in \mathbb{N}$ the error functions $e_j^n:= u_j-u_j^{n}$. It follows that for each $j \in \{1, \ldots, N\}$ the sequence of error functions $\{e_j^n\}_{n \in \mathbb{N}}$ satisfies the equation
	\begin{align}
	\nonumber \Delta e^{n+1}_j &=0 \quad &&\text{in }\Omega_j,\\ 
	e^{n+1}_j&=0 \quad &&\text{on } \Gamma_j^{\text{ext}}, \label{eq:4} \\ \nonumber
	e^{n+1}_j&=\sum_{i \in N_j}e^n_i\chi_j^{i} \quad &&\text{in }\partial \Omega_j \setminus \Gamma_j^{{\Ext}}.
	\end{align}
	
	The key step is now to recognize that a convergence analysis of \eqref{eq:4} is easier to perform using the notion of the skeleton that we introduced earlier. {To do so,} for each $n \in \mathbb{N}$ we define an $N$-dimensional vector $\bold{e}^n$ of error functions as
	\begin{equation}\label{def:Error}
	\bold{e}^n:= \begin{bmatrix}
	\bold{e}^n_1\\
	\bold{e}^n_2\\
	\vdots\\
	\bold{e}^n_N
	\end{bmatrix}.
	\end{equation}
	Here, each element $\bold{e}^n_j$ is defined as
	\begin{equation*}
	\bold{e}^n_j:= e_j^n \vert_{\mathcal{S}_j}.
	\end{equation*}
	In other words, the entry $\bold{e}^n_j$ of the vector $\bold{e}^n$ is the restriction of the error function $e_j^n$ on the skeleton $\mathcal{S}_j$ of the subdomain $\Omega_j$. 
	
	Next, we introduce the iteration operator corresponding to the above choice of the error vector $\bold{e}^n$. To do so, we define the $N \times N$ matrix $T$ by setting
	\begin{equation}\label{Def:Iteration}
	T_{ij}:= \begin{cases}
	P_{ij} \quad &\text{if } j \in N_i,\\
	0 \quad &\text{otherwise. }
	\end{cases}
	\end{equation}
	
	The entries $P_{ij}$ of the iteration operator $T$ are operators defined as follows: For each fixed $i \in \{1, \ldots, N\}$ and $j \in N_i$ the mapping $P_{ij} \colon C^0(\mathcal{S}_{j, i}) \rightarrow C^0(\mathcal{S}^{\Int}_i) \cap L^{\infty}(\mathcal{S}_i)$ is a linear operator such that for all ${v}_{j, i} \in C^0(\mathcal{S}_{j, i})$ it holds that
	\begin{equation*}
	P_{ij} \bold{v} = \mathcal{R}_i\Bigg(\mathcal{E}_i\Big( {v}_{j, i}\chi_i^{j}\vert_{\mathcal{S}_{j, i}}\Big)\Bigg).
	\end{equation*}
	In other words, for a given subdomain $\Omega_i$ and a given neighbour $\Omega_j$, the mapping $P_{ij}$
	
	\begin{enumerate}
		\item takes as input some function ${v}_{j, i}$ defined on the skeleton $\mathcal{S}_{j, i}$, i.e., the part of the interior boundary of subdomain $\Omega_i$ that is {contained in} $\Omega_j$, and multiplies it with the partition of unity function $\chi_i^j$,
		
		\item extends the function ${v} \chi_i^j \vert_{\mathcal{S}_{j, i}}$ harmonically inside the domain $\Omega_i$,
		
		\item and then yields as output the restriction of this harmonic extension on the skeleton $\mathcal{S}_i$.
	\end{enumerate}
	
	We remark that the definition of the iteration operator $T$ implies that it is block-sparse. Indeed, the $i^{\text{th}}$ row of $T$ contains non-zero entries exactly at columns $j \in N_i$. 
	
	\begin{example}
		Let us consider the situation of a domain $\Omega$ consisting of seven subdomains, i.e., $\Omega= {\cup_{j=1}^7} \Omega_j$ as {shown} in Figure \ref{fig:3}.
		In this setting the iteration operator $T$ is a $7 \times 7$ matrix given by
		\begin{equation*}
		T= \begin{bmatrix}
		0& P_{12}& 0& 0& 0 & P_{16} &P_{17}\\
		P_{21}& 0& P_{23}& 0& 0 & 0 &P_{27}\\
		0& P_{32}& 0& P_{34}& 0 & 0 &P_{37}\\
		0& 0& P_{43}& 0& P_{45} & 0 &P_{47}\\
		0& 0& 0& P_{54}& 0 & P_{56} &P_{57}\\
		P_{61}& 0& 0& 0& P_{65} & 0 &P_{67}\\
		P_{71}& P_{72}& P_{73}& P_{74}& P_{75} & P_{76} &0\\
		\end{bmatrix}.
		\end{equation*}
	\end{example}
	
	{Using \eqref{def:Error} and \eqref{Def:Iteration} we can rewrite \eqref{eq:4} as follows}
	\begin{align}\label{eq:5}
	\bold{e}^{n+1} = T \bold{e}^n \qquad \text{for each } n \in \mathbb{N}.
	\end{align}
	
	\vspace{1cm}
	\begin{lemma}\label{lem:2.8}
		Equation \eqref{eq:4} is equivalent to Equation \eqref{eq:5}.
	\end{lemma}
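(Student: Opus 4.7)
The plan is to prove the equivalence in the natural direction $\eqref{eq:4} \Rightarrow \eqref{eq:5}$ by exploiting the linearity of the harmonic Dirichlet problem and then observe that the converse follows because $\eqref{eq:5}$ together with harmonic extension uniquely reconstructs a collection of functions solving $\eqref{eq:4}$. The key structural fact to keep in mind is that for every $i \in \{1,\ldots,N\}$ and every $j \in N_i$, the skeleton piece $\mathcal{S}_{j,i}$ is simultaneously a subset of $\partial \Omega_i$ (so it is a legitimate domain of definition for a boundary datum fed into $\mathcal{E}_i$) and a subset of $\mathcal{S}_j$ (so the restriction $\bold{e}_j^n\big|_{\mathcal{S}_{j,i}}$ makes sense and coincides with $e_j^n\big|_{\mathcal{S}_{j,i}}$).

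For the forward direction, I would fix $i$ and start from the boundary value problem satisfied by $e_i^{n+1}$ in $\eqref{eq:4}$. On the external boundary the datum is zero, and on the internal boundary it is $\sum_{j \in N_i} e_j^n \chi_i^j$, which by Remark \ref{rem:1} extends continuously by zero onto $\Gamma_i^{\Ext}$. The crucial observation is that, by the support property in $\eqref{eq:PU}$, each summand $e_j^n \chi_i^j$ is supported inside $\mathcal{S}_{j,i}$. Appealing to the linearity of the Dirichlet problem (uniqueness plus superposition of solutions with disjoint boundary contributions), I can then write
\begin{equation*}
e_i^{n+1} \;=\; \sum_{j \in N_i} \mathcal{E}_i\bigl( e_j^n \chi_i^j \big|_{\mathcal{S}_{j,i}}\bigr) \qquad \text{in } \Omega_i.
\end{equation*}
Restricting this identity to $\mathcal{S}_i$ via $\mathcal{R}_i$, and replacing $e_j^n\big|_{\mathcal{S}_{j,i}}$ by $\bold{e}_j^n\big|_{\mathcal{S}_{j,i}}$ (which is permitted since $\mathcal{S}_{j,i} \subset \mathcal{S}_j$), I recognize each summand as $P_{ij}\bold{e}_j^n$ by the defining formula of $P_{ij}$. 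Summing over $j \in N_i$ and recalling that $T_{ij} = 0$ whenever $j \notin N_i$, this yields exactly the $i$-th row of $\bold{e}^{n+1} = T\bold{e}^n$.

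For the converse direction, given a sequence $\{\bold{e}^n\}$ produced by $\eqref{eq:5}$, I would reconstruct full subdomain functions by defining $e_i^{n+1}$ via the harmonic Dirichlet problem on $\Omega_i$ with boundary data $0$ on $\Gamma_i^{\Ext}$ and $\sum_{j\in N_i} e_j^n \chi_i^j$ on $\partial\Omega_i\setminus\Gamma_i^{\Ext}$; the forward computation just performed then shows that the restriction of the so-defined $e_i^{n+1}$ to $\mathcal{S}_i$ coincides with $(T\bold{e}^n)_i = \bold{e}_i^{n+1}$, thereby closing the equivalence. The only step I expect to require care is justifying the superposition of harmonic extensions: the composite boundary datum on $\partial\Omega_i$ is only piecewise continuous (possibly with jumps at the endpoints of the various $\mathcal{S}_{j,i}$), so one must invoke the Dirichlet problem in a framework that tolerates such data, for instance the Perron solution or the $L^\infty$ setting used throughout the paper, and check that linearity still applies. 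Beyond this technicality, the argument is purely a matter of carefully matching the support conventions of the partition of unity functions with the definitions of the skeletons $\mathcal{S}_{j,i}$ and of the operators $\mathcal{E}_i$, $\mathcal{R}_i$, and $P_{ij}$.
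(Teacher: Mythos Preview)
Your proposal is correct and follows essentially the same approach as the paper: both arguments reduce the equivalence to the linearity of $\mathcal{E}_j$ and $\mathcal{R}_j$ together with the support property of $\chi_i^j$ on $\mathcal{S}_{j,i}$, arriving at the identity $\bold{e}^n_j = \mathcal{R}_j\big(\mathcal{E}_j\big(\sum_{k\in N_j}\bold{e}^{n-1}_k|_{\mathcal{S}_{k,j}}\,\chi_j^k|_{\mathcal{S}_{k,j}}\big)\big)$. The only difference is that the paper starts from \eqref{eq:5} and unpacks the definitions of $T$ and $P_{jk}$ to reach \eqref{eq:4} ``by direct inspection,'' whereas you proceed in the reverse order and spell out the superposition argument and the converse reconstruction more carefully; your added remarks on the piecewise-continuous boundary data and the inclusion $\mathcal{S}_{j,i}\subset\mathcal{S}_j$ are justified concerns that the paper leaves implicit.
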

	\begin{proof}
		We show that \eqref{eq:5} can equivalently be rewritten as \eqref{eq:4}. To this end, let $n \in \mathbb{N}$. Then Equation \eqref{eq:5} can be written as
		\begin{align*}
		\bold{e}^n_j&= \sum_{k=1}^N T_{jk} \bold{e}^{n-1}_k \qquad \forall  j \in \{1, \ldots, N\},\\
		\intertext{which is equivalent to}
		\bold{e}^n_j&= \sum_{k \in N_j} P_{jk} \bold{e}^{n-1}_k \qquad \forall  j \in \{1, \ldots, N\}.
		\end{align*}
		Using the definition of the operators $P_{ik}$ we obtain for all $j \in \{1, \ldots, N\}$
		\begin{equation}\label{eq:2bb}
		\bold{e}^n_j
		= \sum_{k \in N_j} \mathcal{R}_j\Bigg(\mathcal{E}_j\Big(\bold{e}^{n-1}_k\vert_{\mathcal{S}_{k, j}}\, \chi_j^{k} \vert_{\mathcal{S}_{k, j}}\Big)\Bigg) 
		= \mathcal{R}_j\Bigg(\mathcal{E}_j\Big( \sum_{k \in N_j}\bold{e}^{n-1}_k \vert_{\mathcal{S}_{k, j}} \,\chi_j^{k} \vert_{\mathcal{S}_{k, j}}\Big)\Bigg) .
		\end{equation}
		A direct inspection now shows that Equation \eqref{eq:2bb} is equivalent to Equation \eqref{eq:4}.
	\end{proof}
	
	It therefore follows that in order to analyse the convergence of the sequence of error functions $\{e^n_j\}_{n \in \mathbb{N}}, ~~ j \in \{1, \ldots, N\}$ defined through Equation \eqref{eq:4}, we must {study} the convergence of the error vectors $\{\bold{e}^n\}_{n \in \mathbb{N}}$ defined through the matrix equation \eqref{eq:5}. More specifically, we must analyse the structure of $T$. This is the subject of the next Subsection \ref{sec:conv}.\\
	
	We conclude this section by returning to the geometric constraint A3) that we imposed on the subdomains $\Omega_i, ~i=1, \ldots, N$ in Section \ref{sec:2.1.1}. The following remark demonstrates the necessity of this constraint.

	\subsection{Convergence analysis}\label{sec:conv}

	\subsubsection{Technical lemmas}\label{sec:Technical}
	The goal of this section is to introduce some technical results that we use to prove our main theorems. We begin with the following first result.
	
	\begin{lemma}\label{lem:1}
		Let $j \in \{1, \ldots, N\}$ and consider the subdomain $\Omega_j$, let $\widetilde{u} \in L^{\infty}(\partial \Omega_j)$ be a non-negative, piecewise continuous function such that  $\text{\rm ess}\sup_{\partial \Omega_j} \widetilde{u} \leq 1$.
		Assume that there exists at least one neighbouring index $k \in N_j$ of the subdomain $\Omega_j$ such that $\widetilde{u}< 1 ~\text{\rm in }\mathcal{S}^{\rm{int}}_{k, j}$,
		and define the function $u^{\rm{new}} \in L^{\infty}(\mathcal{S}_j)$ as
		$u^{\rm{new}}:= \mathcal{R}_j \left(\mathcal{E}_j (\widetilde{u})\right)$.
		Then it holds that
		\begin{equation*}
		0 \leq \text{\rm ess}\sup_{\mathcal{S}_j} u^{\rm{new}} \leq 1, \quad
		u^{\rm{new}}(x) < 1 ~\forall x\in\mathcal{S}_j^{{\Int}}.
		\end{equation*}
	\end{lemma}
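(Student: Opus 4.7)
The plan is to reduce everything to the maximum principle for the harmonic function $w := \mathcal{E}_j(\widetilde{u})$ in $\Omega_j$, using the fact that $u^{\text{new}} = \mathcal{R}_j(w)$ is just the pointwise restriction of $w$ to the skeleton $\mathcal{S}_j \subset \overline{\Omega_j}$. First, I would invoke the weak maximum principle for the Dirichlet problem defining $\mathcal{E}_j$: since $0 \leq \widetilde{u} \leq 1$ almost everywhere on $\partial \Omega_j$ (piecewise continuously), the solution satisfies $0 \leq w(x) \leq 1$ for every $x \in \Omega_j$. Restricting to $\mathcal{S}_j$, the portion of $\mathcal{S}_j$ lying inside $\Omega_j$ inherits this bound pointwise, while the portion lying on $\partial \Omega_j$ inherits it in the essential supremum sense from the boundary data. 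This already yields $0 \leq \text{ess}\sup_{\mathcal{S}_j} u^{\text{new}} \leq 1$.

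For the strict inequality on $\mathcal{S}_j^{\Int}$, the key is that $\mathcal{S}_j^{\Int} \subset \Omega_j$ by construction (the interior skeleton is exactly $\mathcal{S}_j \setminus \partial \Omega_j$). I would argue by contradiction via the strong maximum principle for harmonic functions: if $w(x_0) = 1$ at some $x_0 \in \Omega_j$, then $w$ would be identically $1$ in the connected open set $\Omega_j$, which forces the boundary trace $\widetilde{u} \equiv 1$ a.e.\ on $\partial \Omega_j$. But by hypothesis there exists $k \in N_j$ with $\widetilde{u} < 1$ on $\mathcal{S}_{k,j}^{\Int}$, and this set is a relatively open, nonempty subset of $\partial \Omega_j$ (by the simple intersection condition A1, the arc of $\partial\Omega_j$ lying in $\Omega_k$ but not on $\partial\Omega_k$ has positive measure on the circle $\partial \Omega_j$). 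Hence $\widetilde{u} \equiv 1$ a.e.\ cannot hold, producing a contradiction. Therefore $w(x) < 1$ for every $x \in \Omega_j$, and so $u^{\text{new}}(x) < 1$ for every $x \in \mathcal{S}_j^{\Int}$.

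The only subtlety I anticipate is justifying the strong maximum principle and the sharp boundary trace for $w$ when the Dirichlet data $\widetilde{u}$ is merely piecewise continuous in $L^\infty$ rather than continuous. Interior strict inequality for harmonic functions is purely an interior statement, so it goes through without modification once we know $w \in C^2(\Omega_j)$ and $w \leq 1$ in $\Omega_j$. The trace argument is classical: at Lebesgue points of $\widetilde{u}$ the extension agrees with $\widetilde{u}$, so $w \equiv 1$ in $\Omega_j$ forces $\widetilde{u} = 1$ at a.e.\ point of $\partial \Omega_j$, which is incompatible with $\widetilde{u} < 1$ on a set of positive surface measure. Everything else is then a direct bookkeeping step using the definitions of $\mathcal{R}_j$, $\mathcal{S}_j$, and $\mathcal{S}_j^{\Int}$.
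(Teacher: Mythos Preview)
Your proof is correct and follows essentially the same approach as the paper: the paper's proof is a single sentence invoking the maximum principle for harmonic functions, and your argument is a careful expansion of exactly that---the weak maximum principle gives the bounds $0\le w\le 1$ in $\Omega_j$, and the strong maximum principle (together with the fact that $\mathcal{S}_j^{\Int}\subset\Omega_j$ and that $\mathcal{S}_{k,j}^{\Int}$ has positive measure on $\partial\Omega_j$) rules out $w$ attaining the value $1$ at an interior point. The paper's passing mention of the partition of unity functions is not actually needed for the lemma as stated; your version is cleaner in this respect.
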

	
	An example of the application of Lemma \ref{lem:1} is given in Figure \ref{fig:5}.
	\begin{figure}
		\centering
		\definecolor{zzzzzz}{rgb}{0.6,0.6,0.6}
		\begin{tikzpicture}[scale=0.5]
		\draw [shift={(23.85,11.26)},line width=2.8pt,color=zzzzzz]  plot[domain=1.15:1.98,variable=\t]({1*1.56*cos(\t r)+0*1.56*sin(\t r)},{0*1.56*cos(\t r)+1*1.56*sin(\t r)});
		\draw [dash pattern=on 5pt off 5pt] (26.02,12.5) circle (1.56cm);
		\draw [dash pattern=on 5pt off 5pt] (26,10) circle (1.56cm);
		\draw [dash pattern=on 5pt off 5pt] (23.82,8.77) circle (1.56cm);
		\draw [dash pattern=on 5pt off 5pt] (21.67,10.03) circle (1.56cm);
		\draw [dash pattern=on 5pt off 5pt] (21.69,12.53) circle (1.56cm);
		\draw [dash pattern=on 5pt off 5pt] (23.86,13.77) circle (1.56cm);
		\draw(23.84,11.26) circle (1.56cm);
		\draw[black] (23.8,11.2) node {$\partial \Omega_j$};
		\draw[black] (23.8,13.9) node {$\Omega_k$};
		\draw[black] (29.25,11.7) node {$\mathcal{R}_j\circ\mathcal{E}_j$};
		\draw[black] (33.0,11.5) node {$\mathcal{S}_j$};
		\draw [<-] (30.4,11.4)-- (28.2,11.4);
		\draw [dash pattern=on 5pt off 5pt] (33.02,11.44) circle (1.56cm);
		\draw [shift={(30.87,12.71)},line width=2.8pt,color=zzzzzz]  plot[domain=-0.95:0.10,variable=\t]({1*1.56*cos(\t r)+0*1.56*sin(\t r)},{0*1.56*cos(\t r)+1*1.56*sin(\t r)});
		\draw [shift={(30.85,10.21)},line width=2.8pt,color=zzzzzz]  plot[domain=0.10:0.93,variable=\t]({1*1.56*cos(\t r)+0*1.56*sin(\t r)},{0*1.56*cos(\t r)+1*1.56*sin(\t r)});
		\draw [shift={(33,8.95)},line width=2.8pt,color=zzzzzz]  plot[domain=1.15:1.98,variable=\t]({1*1.56*cos(\t r)+0*1.56*sin(\t r)},{0*1.56*cos(\t r)+1*1.56*sin(\t r)});
		\draw [shift={(35.18,10.18)},line width=2.8pt,color=zzzzzz]  plot[domain=2.16:3.02,variable=\t]({1*1.56*cos(\t r)+0*1.56*sin(\t r)},{0*1.56*cos(\t r)+1*1.56*sin(\t r)});
		\draw [shift={(35.2,12.68)},line width=2.8pt,color=zzzzzz]  plot[domain=3.02:4.07,variable=\t]({1*1.56*cos(\t r)+0*1.56*sin(\t r)},{0*1.56*cos(\t r)+1*1.56*sin(\t r)});
		\draw [shift={(33.04,13.95)},line width=2.8pt,color=zzzzzz]  plot[domain=4.29:5.10,variable=\t]({1*1.56*cos(\t r)+0*1.56*sin(\t r)},{0*1.56*cos(\t r)+1*1.56*sin(\t r)});
		
		\draw [shift={(30.87,12.71)},color=black]  plot[domain=-1.18:0.11,variable=\t]({1*1.56*cos(\t r)+0*1.56*sin(\t r)},{0*1.56*cos(\t r)+1*1.56*sin(\t r)});
		\draw [shift={(30.85,10.21)},color=black]  plot[domain=-0.13:1.16,variable=\t]({1*1.56*cos(\t r)+0*1.56*sin(\t r)},{0*1.56*cos(\t r)+1*1.56*sin(\t r)});
		\draw [shift={(33,8.95)},color=black]  plot[domain=0.92:2.21,variable=\t]({1*1.56*cos(\t r)+0*1.56*sin(\t r)},{0*1.56*cos(\t r)+1*1.56*sin(\t r)});
		\draw [shift={(35.18,10.18)},color=black]  plot[domain=1.97:3.25,variable=\t]({1*1.56*cos(\t r)+0*1.56*sin(\t r)},{0*1.56*cos(\t r)+1*1.56*sin(\t r)});
		\draw [shift={(35.2,12.68)},color=black]  plot[domain=3.02:4.3,variable=\t]({1*1.56*cos(\t r)+0*1.56*sin(\t r)},{0*1.56*cos(\t r)+1*1.56*sin(\t r)});
		\draw [shift={(33.04,13.95)},color=black]  plot[domain=4.06:5.34,variable=\t]({1*1.56*cos(\t r)+0*1.56*sin(\t r)},{0*1.56*cos(\t r)+1*1.56*sin(\t r)});
		
		\fill [color=black] (22.29,11.46) circle (2.4pt);
		\fill [color=black] (22.29,11.09) circle (2.4pt);
		\fill [color=black] (22.61,11.28) circle (2.4pt);
		\fill [color=black] (22.89,10.02) circle (2.4pt);
		\fill [color=black] (23.22,10.21) circle (2.4pt);
		\fill [color=black] (23.22,9.83) circle (2.4pt);
		\fill [color=black] (24.45,9.82) circle (2.4pt);
		\fill [color=black] (24.45,10.2) circle (2.4pt);
		\fill [color=black] (24.77,10.01) circle (2.4pt);
		\fill [color=black] (25.39,11.07) circle (2.4pt);
		\fill [color=black] (25.08,11.26) circle (2.4pt);
		\fill [color=black] (25.39,11.44) circle (2.4pt);
		\fill [color=black] (24.78,12.5) circle (2.4pt);
		\fill [color=black] (24.47,12.69) circle (2.4pt);
		\fill [color=black] (24.47,12.33) circle (2.4pt);
		\fill [color=black] (23.24,12.7) circle (2.4pt);
		\fill [color=black] (23.24,12.34) circle (2.4pt);
		\fill [color=black] (22.92,12.52) circle (2.4pt);
		\fill [color=black] (31.47,11.64) circle (2.4pt);
		\fill [color=black] (31.47,11.27) circle (2.4pt);
		\fill [color=black] (31.79,11.46) circle (2.4pt);
		\fill [color=black] (32.07,10.2) circle (2.4pt);
		\fill [color=black] (32.4,10.39) circle (2.4pt);
		\fill [color=black] (32.4,10.01) circle (2.4pt);
		\fill [color=black] (33.63,10) circle (2.4pt);
		\fill [color=black] (33.63,10.38) circle (2.4pt);
		\fill [color=black] (33.95,10.19) circle (2.4pt);
		\fill [color=black] (34.57,11.25) circle (2.4pt);
		\fill [color=black] (34.26,11.44) circle (2.4pt);
		\fill [color=black] (34.57,11.62) circle (2.4pt);
		\fill [color=black] (33.96,12.68) circle (2.4pt);
		\fill [color=black] (33.65,12.87) circle (2.4pt);
		\fill [color=black] (33.65,12.51) circle (2.4pt);
		\fill [color=black] (32.42,12.88) circle (2.4pt);
		\fill [color=black] (32.42,12.52) circle (2.4pt);
		\fill [color=black] (32.1,12.7) circle (2.4pt);
		\end{tikzpicture}
		\caption{Example of the application of Lemma \ref{lem:1}.
			A function $v: \partial \Omega_j \rightarrow \mathbb{R}$ such that
			$v(x) \in [0,1]$ for all $x \in \partial \Omega_j$ with $v(x)<1$
			for $x \in \Gamma_j^{k,0}$ for some $k$ (thick gray arc in the left picture)
			is mapped by the operator $\mathcal{R}_j\circ\mathcal{E}_j$
			on to a function $w:=\mathcal{R}_j(\mathcal{E}_j(v))$
			defined on $\mathcal{S}_j$ and satisfying $w(x) \in [0,1]$ for all
			$x \in \mathcal{S}_j$, $w(x) < 1$ for all $x \in \mathcal{S}_j^{\Int}$.}
		\label{fig:5}
	\end{figure}
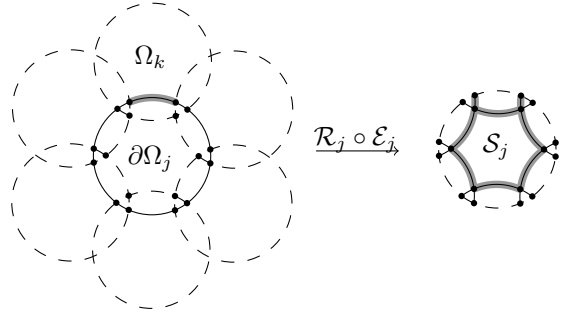
	
	\begin{proof}
		The proof follows from the maximum principle for harmonic functions (see, e.g., \cite{CiaramellaGander2}) combined with the definition of the partition of unity functions introduced above.
	\end{proof}

	Next, we {prove a fundamental} lemma concerning the norm of the iteration operator $T$.
	
	\begin{lemma}\label{lem:Hassan}
		Let $\bold{1} \in \Pi_{j=1}^N C^0\left(\mathcal{S}_j\right)$ denote an $N$-dimensional vector with the property that every $j^{\text{th}}$ entry of the vector is a function identically equal to 1 on the skeleton $\mathcal{S}_j$. Then for any natural number $n \in \mathbb{N}$ it holds that
		\begin{equation*}
		\Vert T^n\Vert_{\rm{OP}, \infty}:= \sup_{\substack{\bold{v} \in \Pi_{j=1}^N C^0(\mathcal{S}^{\rm int}_j)\\ \Vert \bold{v}\Vert_{\infty}=1}}\Vert T^n \bold{v}\Vert_{\infty} = \Vert T^n \bold{1}\Vert_{\infty}.
		\end{equation*}
	\end{lemma}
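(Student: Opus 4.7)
The strategy is to exploit two key structural properties of the iteration operator $T$: linearity and monotonicity with respect to pointwise order. Linearity of $T$ is immediate from the definition, since each entry $P_{ij}$ is a composition of linear maps (multiplication by the fixed partition-of-unity function $\chi_i^j$, the harmonic extension $\mathcal{E}_i$, and the restriction $\mathcal{R}_i$). Monotonicity is the crucial input: if $\mathbf{v} = (v_1, \ldots, v_N)$ satisfies $v_j \geq 0$ a.e.\ on $\mathcal{S}_j$ for every $j$, then $(T\mathbf{v})_j \geq 0$ a.e.\ on $\mathcal{S}_j$ as well. This follows because each partition-of-unity function $\chi_i^j$ is non-negative by \eqref{eq:PU}, so the Dirichlet data $v_{j,i} \chi_i^j|_{\mathcal{S}_{j,i}}$ (extended by $0$ on the remainder of $\partial\Omega_i$) is non-negative; the maximum principle applied to $\mathcal{E}_i$ then gives a non-negative harmonic extension, whose restriction to $\mathcal{S}_i$ is also non-negative. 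An induction then yields that $T^n$ is both linear and monotone for every $n \in \mathbb{N}$.

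With these two properties in hand, the identity follows by a standard positive-operator argument. Given any admissible test vector $\mathbf{v} \in \prod_{j=1}^N C^0(\mathcal{S}_j^{\text{int}})$ with $\|\mathbf{v}\|_\infty = 1$, the inequalities $\mathbf{1} - \mathbf{v} \geq 0$ and $\mathbf{1} + \mathbf{v} \geq 0$ hold pointwise a.e., so by linearity and monotonicity
\begin{equation*}
T^n \mathbf{1} - T^n \mathbf{v} \;=\; T^n(\mathbf{1} - \mathbf{v}) \;\geq\; 0, \qquad T^n \mathbf{1} + T^n \mathbf{v} \;=\; T^n(\mathbf{1} + \mathbf{v}) \;\geq\; 0,
\end{equation*}
which implies $|T^n \mathbf{v}| \leq T^n \mathbf{1}$ pointwise a.e.\ on each $\mathcal{S}_j$. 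Taking essential suprema gives $\|T^n \mathbf{v}\|_\infty \leq \|T^n \mathbf{1}\|_\infty$, and taking the supremum over all admissible $\mathbf{v}$ yields the inequality $\|T^n\|_{\mathrm{OP},\infty} \leq \|T^n \mathbf{1}\|_\infty$. The reverse inequality is immediate: $\mathbf{1}$ itself lies in $\prod_{j=1}^N C^0(\mathcal{S}_j^{\text{int}})$ and satisfies $\|\mathbf{1}\|_\infty = 1$, so it is an admissible test vector realizing $\|T^n \mathbf{1}\|_\infty$ in the supremum.

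The only delicate point is the verification of monotonicity of $\mathcal{E}_i$ for the correct boundary data. This is where the minimum principle for harmonic functions (attained on $\partial\Omega_i$) is invoked: since the Dirichlet data is non-negative everywhere on $\partial\Omega_i$ (equal to $v_{j,i}\chi_i^j \geq 0$ on $\mathcal{S}_{j,i}$ and $0$ on the rest of $\partial\Omega_i$), the extension is non-negative throughout $\Omega_i$. Because $T\mathbf{v}$ is continuous on each interior skeleton $\mathcal{S}_j^{\text{int}}$, the pointwise bound extends to an essential-sup bound on all of $\mathcal{S}_j$, so no measure-theoretic subtlety arises at the finitely many skeleton endpoints.
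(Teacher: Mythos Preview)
Your proof is correct and follows essentially the same approach as the paper: both establish monotonicity of $T$ via the maximum principle for harmonic functions, iterate to $T^n$, and conclude that $\mathbf{1}$ realizes the operator norm. Your presentation is in fact slightly more explicit than the paper's, since you apply monotonicity to both $\mathbf{1}-\mathbf{v}$ and $\mathbf{1}+\mathbf{v}$ to obtain the two-sided bound $|T^n\mathbf{v}|\le T^n\mathbf{1}$ directly, whereas the paper states only the one-sided inequality $T^n\mathbf{v}\le T^n\mathbf{1}$ and leaves the application to $-\mathbf{v}$ implicit.
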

	\begin{proof}
		
		We first prove an intermediate result. Let $\bold{v}, \bold{u} \in \Pi_{j=1}^N C^0(\mathcal{S}^{\rm int}_j)$ be {two} functions such that 
		\begin{equation*}
		{\bold{v} \leq \bold{u}}  ~\text{ on } \cup_{j=1}^N\mathcal{S}_j.
		\end{equation*}
		
		We prove that
		\begin{equation}\label{eq:hassan}
		(T \bold{v}) \leq (T \bold{u}) \qquad \text{on } \cup_{j=1}^N \mathcal{S}_j.
		\end{equation}
		
		Define ${\bold{w}}:= T (\bold{u}-\bold{v})$. Then by definition of $T$ we obtain that for each $j \in \{1, \ldots, N\}$ it holds that
		\begin{align*}
		\bold{w}_{j}= \sum_{i \in N_j} P_{ji} (\bold{u}_i -\bold{v}_i)&=\sum_{i \in N_j}\mathcal{R}_j\Bigg(\mathcal{E}_j\Big( \big(\bold{u}_i-\bold{v}_i\big)\vert_{\mathcal{S}_{j, i}} \, \chi_j^{i}\vert_{\mathcal{S}_{j, i}} \Big)\Bigg)\\
		&=\mathcal{R}_j\Bigg(\mathcal{E}_j\Big(\sum_{i \in N_j} \big(\bold{u}_i-\bold{v}_i\big)\vert_{\mathcal{S}_{j, i}}\,\chi_j^{i} \vert_{\mathcal{S}_{j, i}}\Big)\Bigg).
		\end{align*}
		
		In other words, the function ${\bold{w}}_{j}$ is simply the restriction onto the skeleton $\mathcal{S}_j$ of the harmonic solution to a Dirichlet problem with boundary data from $\bold{u}-\bold{v}$. 
		
		Recall that we have by hypothesis that for each $j \in \{1, \ldots, N\}$ it holds that $\bold{u}_j-\bold{v}_j \geq 0$ on $\mathcal{S}_j$. It follows from the maximum principle that for each $j \in \{1, \ldots, N\}$ we have
		\begin{equation*}
		{\bold{w}}_j \geq 0 \qquad \text{on } \mathcal{S}_j.
		\end{equation*}
		Therefore, we obtain that
		\begin{equation*}
		{\bold{w}}= \left(T(\bold{u}-\bold{v})\right)\geq 0 \qquad \text{on }\cup_{j=1}^N \mathcal{S}_j.
		\end{equation*}
		This completes the proof for Equation \eqref{eq:hassan}.
		
		We now proceed to the proof of Lemma \ref{lem:Hassan}. 
		Note that it suffices to prove that for all natural numbers $n \in \mathbb{N}$ and all functions $\bold{v} \in \Pi_{j=1}^N C^0(\mathcal{S}^{\rm int}_j)$ such that $\Vert \bold{v} \Vert_{\infty}=1$ it holds that
		\begin{equation}\label{eq:induction1}
		(T^n \bold{v}) \leq (T^n \bold{1}) \qquad \text{on } \cup_{j=1}^N \mathcal{S}_j.
		\end{equation}
		
		The proof follows easily by induction from Equation \eqref{eq:hassan}. Indeed, the base case follows immediately by picking $\bold{u}=\bold{1}$. Next, assume that \eqref{eq:induction1} holds for some $k \in \mathbb{N}$. We define functions $\widetilde{\bold{v}}:=T^k \bold{v}$ and $\widetilde{\bold{u}}:= T^k \bold{1}$ and recognize that the induction hypothesis implies that
		\begin{equation*}
		\widetilde{\bold{v}} \leq \widetilde{\bold{u}} \qquad \text{on } \cup_{j=1}^N \mathcal{S}_j.
		\end{equation*}
		
		Therefore, applying once again Equation \eqref{eq:hassan} to the functions $\widetilde{\bold{v}}$  and $\widetilde{\bold{u}}$ yields the required result. This completes the proof.
	\end{proof}
	
	We are now ready to state our second fundamental result. 
	\begin{lemma}\label{lem:Vr}
		Let $\Omega= \cup_{j=1}^N \Omega_j$ be a domain with $N_{\max}$ layers. Then for all natural numbers $n \leq N_{\max}$ it holds that
		$T^n \bold{1} \in \mathcal{V}_n$.
	\end{lemma}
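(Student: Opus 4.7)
The plan is to proceed by induction on $n$. For the base case $n=1$, I verify directly that $T\bold{1} \in \mathcal{V}_1$. For any $j \in \mathcal{L}_1$, the set $\Gamma_j^{\Ext}$ has positive measure and Remark \ref{rem:1} tells us the summed partition-of-unity data vanishes there; since the boundary data for $\mathcal{E}_j$ equals $\sum_{i \in N_j} \chi_j^i = 1$ on $\text{int}(\Gamma_j^{\Int})$ and $0$ on $\Gamma_j^{\Ext}$, the strong maximum principle forces the harmonic extension to be strictly less than $1$ throughout $\Omega_j$, and in particular on $\mathcal{S}_j^{\Int}$. For $j$ in any later layer, $\Gamma_j^{\Ext}$ has measure zero, so the Dirichlet data equals $1$ a.e.\ on $\partial \Omega_j$ and the extension is the constant function $1$, matching the required condition on $\mathcal{S}_j$.

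For the inductive step, write $\bold{v}^n := T^n \bold{1}$, assume $\bold{v}^n \in \mathcal{V}_n$, and aim to prove $\bold{v}^{n+1} := T\bold{v}^n \in \mathcal{V}_{n+1}$. The norm bound $\|\bold{v}^{n+1}\|_\infty \leq 1$ follows immediately from Lemma \ref{lem:Hassan}, since $\|T^{n+1}\bold{1}\|_\infty = \|T^{n+1}\|_{\rm OP,\infty}$ is governed by the unit input. The remaining two conditions are handled by splitting on the layer of $j$.

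Case A: $j \in \mathcal{L}_\ell$ with $\ell \leq n+1$. Either $j \in \mathcal{L}_1$, in which case the Dirichlet data for $\bold{v}^{n+1}_j$ vanishes on the positive-measure set $\Gamma_j^{\Ext}$ while remaining $\leq 1$ elsewhere, or $\ell \geq 2$, in which case the definition of layers supplies a neighbour $k^* \in \cup_{m=1}^{\ell-1} \mathcal{L}_m \subseteq \cup_{m=1}^{n} \mathcal{L}_m$. In the second subcase, the induction hypothesis yields $\bold{v}^n_{k^*} < 1$ on $\mathcal{S}_{k^*}^{\Int}$, which by assumption A1) contains the open set $\Gamma_j^{k^*,0}$ of positive measure. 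On $\Gamma_j^{k^*,0}$ we have $\chi_j^{k^*} \equiv 1$ and $\chi_j^i \equiv 0$ for $i \neq k^*$, so the total Dirichlet data for $\mathcal{E}_j$ equals $\bold{v}^n_{k^*} < 1$ there while remaining $\leq 1$ elsewhere. In both subcases the strong maximum principle (the essential content of Lemma \ref{lem:1}) forces $\mathcal{E}_j$ of the data to be strictly less than $1$ on the open interior of $\Omega_j$, which contains $\mathcal{S}_j^{\Int}$.

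Case B: $j \in \mathcal{L}_\ell$ with $\ell \geq n+2$. From the inductive definition of layers, $j$ has no exterior boundary and no neighbour in $\cup_{m=1}^n \mathcal{L}_m$, for otherwise $j$ would already be a boundary node of $\mathcal{G}_{n+1}$ and thus lie in some $\mathcal{L}_m$ with $m \leq n+1$. Consequently every $i \in N_j$ belongs to $\cup_{m=n+1}^{N_{\max}} \mathcal{L}_m$, and the induction hypothesis gives $\bold{v}^n_i = 1$ on $\mathcal{S}_i$. The data for $\mathcal{E}_j$ reduces to $\sum_{i \in N_j} \chi_j^i$, which equals $1$ on $\text{int}(\Gamma_j^{\Int})$ and $0$ on the measure-zero set $\Gamma_j^{\Ext}$; the harmonic extension is therefore constant $1$ on $\Omega_j$, giving $\bold{v}^{n+1}_j \equiv 1$ on $\mathcal{S}_j$. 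The principal obstacle is the combinatorial layer-structural claim invoked in Case B; once unpacked from the iterative layer definition, the remaining analytic content is just the strong maximum principle repeatedly applied via Lemma \ref{lem:1}.
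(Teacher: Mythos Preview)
Your proof is correct and follows essentially the same route as the paper: induction on $n$, a case split according to the layer of $j$, the strong maximum principle via Lemma~\ref{lem:1} for subdomains in the first $n+1$ layers, and the fact that $\sum_{i \in N_j} \chi_j^i = 1$ on $\text{int}(\Gamma_j^{\Int})$ for subdomains in deeper layers. The paper's proof is organised identically; the only differences are cosmetic---you spell out the combinatorial layer claim in Case~B (which the paper dismisses with ``It follows from the definition of the layers'') and you invoke A1) explicitly to guarantee that $\Gamma_j^{k^*,0}$ has positive measure, whereas the paper leaves this implicit.
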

	\begin{proof}
		The proof proceeds by induction.
		We consider the base case $n=1$ and show that $T \bold{1} \in \mathcal{V}_1$.
		To this end, let $\bold{v}= T \bold{1}$. We first consider a subdomain $\Omega_j$ such that the node $j$ is in $\mathcal{L}_1$. Clearly, we have that $\partial \Omega_j \cap \partial \Omega \neq \emptyset$. Therefore applying Lemma \ref{lem:1} yields that $\bold{v}_j(x) < 1$ for all $x \in \mathcal{S}_j^{\Int}$.
		Next, consider a subdomain $\Omega_j$ such that node $j$ is in $\mathcal{L}_2$ or higher. It follows from the definition of the layers that $\Omega_j$ does not correspond to a boundary node, and thus $\partial \Omega_j \cap \partial \Omega = \emptyset$. Using the definition of the iteration operator $T$, Equation \eqref{eq:new}, and Remark \ref{rem:1} we have that
		\begin{equation} \label{eq:calc}
		\begin{split}
		{\bold{v}}_{j}
		&= \sum_{i \in N_j} P_{ji} (\bold{1})_i
		=\sum_{i \in N_j}\mathcal{R}_j\Bigg(\mathcal{E}_j\Big((\bold{1})_i \vert_{\mathcal{S}_{j, i}}\, \chi_j^{i} \vert_{\mathcal{S}_{j, i}}\Big)\Bigg)\\
		&=\sum_{i \in N_j}\mathcal{R}_j\Bigg(\mathcal{E}_j\Big(\chi_j^{i} \vert_{\mathcal{S}_{j, i}}\Big)\Bigg)
		=\mathcal{R}_j\Bigg(\mathcal{E}_j\Big(\sum_{i \in N_j}\chi_j^{i} \vert_{\mathcal{S}_{j, i}}\Big)\Bigg)\\
		&=\mathcal{R}_j\big(\mathcal{E}_j (1)\big)=(\bold{1})_j.
		\end{split}
		\end{equation}
		We therefore obtain that ${\bold{v}}_j= (\bold{1})_j$. This completes the proof for the base case.
		
		Assume now that the result holds for some natural number $n=k < N_{\max}$, i.e., that $T^k \bold{1} \in \mathcal{V}_k$. We must prove that $T^{k+1} \bold{1} \in \mathcal{V}_{k+1}$.
		
		Let $\bold{v}:=T^{k}\bold{1}$, let $\bold{w}:= T^{k+1}\bold{1}$ and consider an arbitrary subdomain $\Omega_j$. If node $j$ is in $\mathcal{L}_1$, then Lemma \ref{lem:1} yields that $\bold{v}_j(x) < 1$ for all $x \in \mathcal{S}_j^{\Int}$. On the other hand, if node $j$ is in $\mathcal{L}_m$ where $m \in \{2, \ldots, k+1\}$, then it follows from the definition of the layers that there must exist a neighbouring node $i^\prime \in N_j$ such that node $i^\prime$ is in $\mathcal{L}_{m-1}$. The definition of the iteration operator $T$ therefore yields
		\begin{align*}
		{\bold{w}}_{j}=\sum_{i \in N_j} P_{ji} \bold{v}_i&= \mathcal{R}_j\Bigg(\mathcal{E}_j\Big(\bold{v}_{i^\prime}\vert_{\mathcal{S}_{j, i^\prime}}\, \chi_j^{i^\prime} \vert_{\mathcal{S}_{j, i^\prime}}\Big)\Bigg)+ \sum_{\substack{i \in N_j\\ i \neq i^\prime}}\mathcal{R}_j\Bigg(\mathcal{E}_j\Big( \bold{v}_i\vert_{\mathcal{S}_{j, i}}\,\chi_j^{i} \vert_{\mathcal{S}_{j, i}}\Big)\Bigg)\\
		&=\mathcal{R}_j\Bigg(\mathcal{E}_j\Big(\bold{v}_{i^\prime}\vert_{\mathcal{S}_{j, i^\prime}}\,\chi_j^{i^\prime}\vert_{\mathcal{S}_{j, i^\prime}}+\sum_{\substack{i \in N_j\\ i \neq i^\prime}}\bold{v}_i\vert_{\mathcal{S}_{j, i}}\,\chi_j^{i}\vert_{\mathcal{S}_{j, i}} \Big)\Bigg).
		\end{align*}
		
		The induction hypothesis implies that $\bold{v}_{i^\prime}(x) < 1$ for all $x \in \mathcal{S}_{i^{\prime}}^{\Int}$. This implies in particular that $\bold{v}_{i^\prime}(x) < 1$ for all $x \in \mathcal{S}_{j, i^{\prime}}^{\Int}$. Since the function $\bold{w}_j$ is simply the restriction on to the skeleton $\mathcal{S}_j$ of the harmonic extension of boundary data from the function $\bold{v}$, we can apply Lemma \ref{lem:1} to obtain that $\bold{w}_j (x) < 1$ for all $x \in \mathcal{S}_j^{{\Int}}$.

		If $k+1 = N_{\max}$ then we are done. If not, then consider a subdomain $\Omega_j$ such that the node $j$ is in $\mathcal{L}_{\widetilde{m}}$ where $\widetilde{m} \in \{k+2, \ldots, N_{\max}\}$. We must show that $\bold{w}_j =1$ on $\mathcal{S}_j$. 
		
		It follows from the definition of the layers that all neighbouring nodes $\ell \in N_j$ must belong to $\mathcal{L}_{m^\prime}$ where $m^\prime \in \{k+1, \ldots, N_{\max}\}$. Therefore, the induction hypothesis implies that for each $\ell \in N_j$ it holds that $\bold{v}_\ell = 1$ on $\mathcal{S}_{j, \ell}$. 
		
		Using the definition of the iteration operator $T$ and Remark \ref{rem:1}, and proceeding in the same manner as in Equation \eqref{eq:calc}
		we obtain that  ${\bold{w}_j}=(\bold{1})_j$, and we have thus shown that $T^{k+1} \bold{1} = \bold{w} \in \mathcal{V}_{k+1}$. This completes the proof by induction.
	\end{proof}
	
	Our next goal is to obtain an analogous result for the set $\mathcal{C}_n$. To this end, we require the use of the following key lemma.
	
	\begin{lemma}\label{lem:2}
		Let $\Omega= \cup_{i=1}^N\Omega_i$ be a domain with $N_{\max}$ layers, let $\bold{v} \in \Pi_{j=1}^N C^0\big(\mathcal{S}^{\rm int}_j\big)$ be such that $\Vert \bold{v}_j \Vert_{\infty} \leq 1$, and let $\bold{w}=T\bold{v}$. Consider a subdomain $\Omega_j$ such that for all neighbouring indices $i \in N_j$ it holds that $\bold{v}_i(x) < 1$  for all $x \in \mathcal{S}_i^{\Int}$. Then it holds that
		\begin{align*}
		\Vert \bold{w}_j\Vert_{L^{\infty}(\mathcal{S}_j)}:=\text{\rm ess}\sup_{\mathcal{S}_j} \bold{w}_j < 1.
		\end{align*}
	\end{lemma}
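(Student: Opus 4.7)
The plan is to express $\bold{w}_j$ as the restriction on $\mathcal{S}_j$ of a single harmonic function on the disk $\Omega_j$, use the hypothesis together with the partition-of-unity identity \eqref{eq:new} to force this harmonic function strictly below $1$ inside $\Omega_j$, and finally upgrade the resulting pointwise strictness to a uniform bound on $\mathcal{S}_j$.

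Using linearity of $\mathcal{E}_j$ and the definition of $T$,
\[
\bold{w}_j \;=\; \sum_{i \in N_j} P_{ji}(\bold{v}_i)
\;=\; \mathcal{R}_j\!\left(\mathcal{E}_j\Bigl(\sum_{i \in N_j}\bold{v}_i|_{\mathcal{S}_{i,j}}\,\chi_j^{i}|_{\mathcal{S}_{i,j}}\Bigr)\right)
\;=\; \mathcal{R}_j(u),
\]
where $u$ is harmonic on $\Omega_j$ with boundary data $g := \sum_{i \in N_j}\bold{v}_i\chi_j^{i}$ on $\Gamma_j^{\Int}$ and $g=0$ on $\Gamma_j^{\Ext}$. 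Equation \eqref{eq:new} together with the hypothesis $\bold{v}_i(x) < 1$ for every $x \in \mathcal{S}_i^{\Int}$ and every $i \in N_j$ forces $0 \leq g \leq 1$ on $\partial\Omega_j$, with strict inequality $g(y) < 1$ at every $y$ lying outside the finite (hence measure-zero) set $\bigcup_{i \in N_j}(\mathcal{S}_{i,j}\cap\partial\Omega_i)$ of circle--circle intersections. Since $1-g \geq 0$ does not vanish essentially on $\partial\Omega_j$, the strong maximum principle---equivalently the Poisson representation on the disk $\Omega_j$---yields $u(x) < 1$ strictly for every $x \in \Omega_j$. Because $\mathcal{S}_j^{\Int} \subset \Omega_j$, this already gives $\bold{w}_j(x) < 1$ pointwise for every $x \in \mathcal{S}_j^{\Int}$, which is precisely the conclusion of Lemma \ref{lem:1}.

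The main obstacle is promoting this pointwise strictness to the uniform bound $\text{ess}\sup_{\mathcal{S}_j}\bold{w}_j < 1$, since $\mathcal{S}_j^{\Int}$ is not compactly contained in $\Omega_j$: its closure meets $\partial\Omega_j$ at the finite set $F := \mathcal{S}_j \cap \partial\Omega_j$ of circle--circle intersections, which is exactly where $g$ may be discontinuous. I would argue by contradiction and compactness: if $\text{ess}\sup_{\mathcal{S}_j}\bold{w}_j = 1$, pick a sequence $\{x_n\} \subset \mathcal{S}_j^{\Int}$ with $u(x_n) \to 1$ and, after extraction, $x_n \to x_\infty \in \mathcal{S}_j$. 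If $x_\infty \in \mathcal{S}_j^{\Int} \subset \Omega_j$, continuity of $u$ and the strong maximum principle give an immediate contradiction. The remaining case $x_\infty \in F$ is the genuinely hard one, and is where strengthening Lemma \ref{lem:1}'s single-neighbour hypothesis to ``all neighbours'' becomes essential. The key geometric fact is that each skeleton arc $\mathcal{S}_{j,k}$ meets $\partial\Omega_j$ transversally (two distinct disks cross at a non-zero angle), so the approach $x_n \to x_\infty$ is non-tangential from inside $\Omega_j$. A Fatou-type angular-limit argument on the disk then identifies the limit of $u(x_n)$ as a convex combination of the boundary limits of $g$ at $x_\infty$ from the two sides along $\partial\Omega_j$; the ``all neighbours'' hypothesis, together with continuity of each partition-of-unity function and of each $\bold{v}_i$ on $\mathcal{S}_i^{\Int}$, controls both of these one-sided limits strictly below $1$, delivering the desired contradiction. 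Tracking this boundary analysis rigorously, in particular handling the multiple partition-of-unity contributions at a triple-intersection point $x_\infty$ under assumption A2), is the most delicate part of the proof.
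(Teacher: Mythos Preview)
Your approach mirrors the paper's: express $\bold{w}_j$ as the skeleton restriction of a single harmonic extension with boundary data $h$ satisfying $h=0$ on $\Gamma_j^{\Ext}$ and $h<1$ on $\text{int}(\Gamma_j^{\Int})$, apply the maximum principle to obtain $\bold{w}_j<1$ on $\mathcal{S}_j^{\Int}$, and then handle the endpoints $\mathcal{S}_j\cap\partial\Omega_j$ by an angular-limit (Schwarz-lemma) argument exploiting the transversal crossing of circles. The paper separates the endpoint analysis into two explicit cases---$\hat{x}\in\Gamma_j^{\Ext}$, where the Schwarz lemma is invoked using that one boundary side is identically $0$, versus $\hat{x}\in\text{int}(\Gamma_j^{\Int})$, where it shows directly that $h$ is \emph{continuous} with $h(\hat{x})<1$ (the continuity of the $\chi_j^i$ forces $\chi_j^k(\hat{x})=0$ at a crossing with $\partial\Omega_k$, so the potentially discontinuous $\bold{v}_k$-contribution drops out)---whereas you attempt to treat both uniformly via one-sided limits.

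One point to sharpen: at an exterior endpoint $\hat{x}\in\Gamma_j^{\Ext}\cap\partial\Omega_k$, your assertion that \emph{both} one-sided boundary limits of $g$ lie strictly below $1$ is not warranted. The hypothesis $\bold{v}_k<1$ on $\mathcal{S}_k^{\Int}$ is only pointwise, so $\bold{v}_k(y)\to 1$ as $y\to\hat{x}\in\partial\Omega_k$ along $\partial\Omega_j$ is not excluded, and on that side $\chi_j^k\equiv 1$. The argument is nonetheless rescued precisely because the \emph{other} one-sided limit is $0$ (coming from $\Gamma_j^{\Ext}$), so the angular convex combination is automatically $<1$; this is exactly how the paper's Schwarz-lemma step works, and you should make this explicit rather than claiming strictness on both sides.
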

	\begin{proof}
		By the definition of the iteration operator we have that
		\begin{align*}
		{\bold{w}}_{j}= \sum_{i \in N_j} P_{j i} \bold{v}_i=\sum_{i \in N_j}\mathcal{R}_j\Bigg(\mathcal{E}_j\Big(\bold{v}_i\vert_{\mathcal{S}_{j, i}}\,\chi_j^{i} \vert_{\mathcal{S}_{j, i}}\Big)\Bigg)=\mathcal{R}_j\Bigg(\mathcal{E}_j\Big(\sum_{i \in N_j}\bold{v}_i\vert_{\mathcal{S}_{j, i}}\, \chi_j^{i} \vert_{\mathcal{S}_{j, i}}\Big)\Bigg).
		\end{align*}
		Therefore, we define the function $h \colon \partial \Omega_j\rightarrow \mathbb{R}$ as 
		\begin{align*}
		h(x):= \begin{cases} \sum_{i \in N_j}\bold{v}_i\vert_{\mathcal{S}_{j, i}}(x)\, \chi_j^{i} \vert_{\mathcal{S}_{j, i}}(x) \qquad &\text{if } x \in \text{int}(\Gamma_j^{\text{int}}),\\
		0 \qquad &\text{otherwise}.
		\end{cases}
		\end{align*}
		
		Next, we observe that the definition of the partition of unity functions and the assumptions of Lemma~\ref{lem:2} imply two key properties of the function $h$. 
		\begin{enumerate}[(i)]
			\item It holds that
			\begin{align*}
			h(x)=0 \text{ for all } x \in \Gamma_j^{\Ext}, \qquad h(x)\leq 1 \text{ for all } x \in \Gamma_j^{\Int}.
			\end{align*}
			
			\item It holds that $h$ is a continuous function on $\text{int}\big(\Gamma_j^{\Int}\big)$. 
		\end{enumerate}
		\vspace{0.2cm}
		As a first step, we prove a third key property of the function $h$:
		\begin{enumerate}
			\item[(iii)] It holds that $h(x) < 1$ for all $x \in \text{int}\big(\Gamma_j^{\Int}\big)$.
		\end{enumerate}
		
		\vspace{2mm}
		To this end, let $x \in \text{int}(\Gamma_j^{\Int}) = \text{int}\Big(\overline{\cup_{k \in N_j} \cup_{i \in N_{jk}} \Gamma_{j}^{k, i}}\Big)$. We distinguish two cases:
		\begin{enumerate}
			
			\item $x \in \overline{\Gamma_j^{k, 0}}$ for some neighbouring index $k \in N_j$. We again have two cases:
			
			\begin{itemize} 
				\item $x \in \Gamma_j^{k, 0}$. Recalling the definition of the interior skeleton, we obtain also that $x \in \mathcal{S}_{k, j}^{\Int} \subset \mathcal{S}_k^{\Int}$. Thus it holds that $h(x)= \bold{v}_k(x) \chi_j^k(x)$. We recall that by the definition of the partition of unity functions, it holds that $\chi_{j}^k(x) =1$ for $x \in \Gamma_j^{k, 0}$.  Furthermore, we have by assumption that $\bold{v}_k(x) < 1 $ for $x \in \mathcal{S}_{k, j}^{\Int} \subset \mathcal{S}_k^{\Int}$. We therefore conclude that $h(x) < 1$.
				
				\item $x \notin \Gamma_j^{k, 0}$, i.e., $x$ is a boundary point of the closed set $\overline{\Gamma_j^{k, 0}}$. Now, either $x \in {\Gamma_j^{\Ext}}$ or there exists some neighbouring index $\ell \in N_{jk}$ such that $x \in \overline{\Gamma_j^{k, \ell}}$. In other words, there are exactly two possibilities: either $x$ is a boundary point of the exterior boundary of $\Omega_j$ or $x$ is a boundary point of some triple intersection. Since $x \in \text{int}(\Gamma_j^{\Int})$, we have excluded the first case. The second case $x \in \overline{\Gamma_j^{k, \ell}}$ is covered below. 
			\end{itemize}
			\item $x \in \overline{\Gamma_j^{k, \ell}}$ for some neighbouring indices $k, \ell \in N_j$. Recalling once again the definition of the skeletons, we obtain that $x \in \mathcal{S}_{k, j} \subset \mathcal{S}_{k}$ and $x \in \mathcal{S}_{\ell, j} \subset \mathcal{S}_{\ell}$. It therefore follows that
			\begin{align}\label{eq:newnew}
			h(x)=\bold{v}_k(x)\chi_j^{k}(x) + \bold{v}_{\ell}(x)\chi_j^{\ell}(x).
			\end{align}
			We again have two cases:
			\begin{itemize}
				\item $x \in \mathcal{S}_{k}^{\Int}$. We have by assumption that $\bold{v}_k(x) < 1 $ for $x \in \mathcal{S}_k^{\Int}$. On the other hand, we also know that $\bold{v}_{\ell}(x) \leq 1 $ for $x \in \mathcal{S}_{\ell}$. Therefore, Equation \eqref{eq:newnew} implies that $h(x) < 1$.
				\item $x \notin \mathcal{S}_{k}^{\Int}$. Since $x \in \mathcal{S}_{k}$, we must have that $x \in 
				\partial \Omega_k$. It is readily seen that this in turn implies that $x \in \mathcal{S}_{\ell}^{\Int}$. Therefore, we obtain that $\bold{v}_{\ell}(x) < 1$, and $\bold{v}_k(x) \leq 1 $. Hence, Equation \eqref{eq:newnew} again implies that $h(x) < 1$.
			\end{itemize}
		\end{enumerate}
		
		We conclude that $h(x) < 1$ for all $x \in \text{int}(\Gamma_j^{\Int})$ and therefore Property (iii) of the function $h$ also holds.\\

		Consider now the skeleton $\mathcal{S}_j$ and let $\mathcal{S}_{j, k}, ~ k \in N_j$ be an arbitrary arc of the skeleton. We must show that $\text{ess}\sup_{\mathcal{S}_{j, k}} \bold{w}_j < 1$. This is a slightly delicate argument since the function $\bold{w}_j$ need not be continuous on the closed set $\mathcal{S}_{j, k}$ due to possible jump discontinuities at the endpoints. We therefore proceed in two steps:
		
		\begin{enumerate}
			\item First we show that $\bold{w}_j(x) < 1$ for all $x \in \mathcal{S}_{j, k}^{\Int}$, i.e., that the function $\bold{w}_j$ is strictly smaller than one in the \emph{interior} of the skeleton arc $\mathcal{S}_{j, k}$. Property (iii) of the function $h$ yields that $h(x) < 1 ~ \forall x \in \Gamma_j^{\Int} \supset \mathcal{S}_{k, j}$. Therefore Lemma \ref{lem:1} yields that $\bold{w}_j(x) < 1$ for all $x \in \mathcal{S}_j^{\Int} \supset \mathcal{S}_{j, k}^{\Int}$.
			
			\item Next we show that $\lim_{\substack{x \in \mathcal{S}_{j, k} \\ x \to \partial \Omega_j} }\bold{w}_j(x) < 1$. In other words we must show that the limit of the function $\bold{w}_j$ along the skeleton arc $\mathcal{S}_{j, k}$, as one approaches the endpoints \emph{is strictly smaller than one}. We emphasise that this step is necessary since the function $\bold{w}_j$, a priori, may contain a jump discontinuity at the endpoints of the skeleton \footnote{This situation can arise precisely when the subdomains weakly overlap (see \cite{Lions2})}. 
			
			To this end, let $\hat{x} \in \partial \Omega_j$ denote any endpoint of the skeleton arc $\mathcal{S}_{j, k}$. Once again we have two cases:
			\begin{itemize}
				\item Suppose $\hat{x} \in \Gamma_j^{\Ext}$. Recall that $h=0$ on $\Gamma_j^{\Ext}$ from Property (i) and $h(x) < 1$ for all $x \in \text{int}(\Gamma_j^{\Int})$ from Property (iii). Thus, the Schwarz lemma (see, e.g., \cite[Pages 632-635]{Krylov}, \cite{CiaramellaGander2} and \cite[Section 3]{Lions2}) implies that 
				\begin{align*}
				\lim_{\substack{x \in \mathcal{S}_{j, k} \\ x \to \hat{x}} }\bold{w}_j(x) = \alpha< 1,
				\end{align*}
				where $\alpha$ is a constant that depends on the angle at which the skeleton arc $ \mathcal{S}_{j, k}$ intersects the boundary $\partial \Omega_j$ at the point $\hat{x}$. 
				
				\item Suppose $\hat{x} \notin \Gamma_j^{\Ext}$. Then $\hat{x} \in \text{int}\big(\Gamma_j^{\Int}\big)$. Property (ii) of the function $h$ implies that $h$ is a continuous function on $\text{int}\big(\Gamma_j^{\Int}\big)$. This in turn implies that the harmonic extension $\mathcal{E}_j(h)$ is continuous in a neighbourhood of the point $\hat{x}$. This yields in particular that
				\begin{align*}
				\lim_{\substack{x \in \mathcal{S}_{j, k} \\ x \to \hat{x}} }\bold{w}_j(x) = \bold{w}_j(\hat{x})= h(\hat{x}) <1.
				\end{align*}
			\end{itemize}
			
			Hence, the claim holds in both cases.
		\end{enumerate}
		
		It therefore follows that $\text{ess}\sup_{\mathcal{S}_{j, k}} \bold{w}_j< 1$. Since the skeleton arc $\mathcal{S}_{j, k}$ was arbitrary, we obtain that
		\begin{align*}
		\text{ess}\sup_{\mathcal{S}_{j}} \bold{w}_j < 1.
		\end{align*}
	\end{proof}
	
	Consider the setting of Lemma \ref{lem:2}. The careful reader will observe that the proof of Lemma \ref{lem:2} required the use of all three key properties of the boundary data $h$. In particular, we explicitly used the fact that $h$ is a continuous function on the interior boundary $\Gamma_j^{\text{int}}$. The continuity of the function $h$ is itself a consequence of our earlier assumption that the partition of unity functions are continuous on the interior boundary $\Gamma_j^{\text{int}}$. One might therefore wonder if the proof of Lemma \ref{lem:2} still holds if the continuity of the partition of unity functions is not imposed. It turns out that this continuity assumption is not necessary to prove Lemma \ref{lem:2}, and we may instead use the Schwarz lemma. However, as we demonstrate in Section \ref{sec:num} using some numerical examples, a choice of discontinuous partition of unity functions may lead to a quantitatively slightly worse contraction of the error at each iteration. We remark that the ddCOSMO implementation uses continuous partition of unity functions in practice.
	
	We conclude this subsection by observing that Lemma \ref{lem:2} has the following consequence.
	
	\begin{lemma}\label{lem:3}
		Let $\Omega= \cup_{j=1}^N \Omega_j$ be a domain with $N_{\max}$ layers, let $n \leq N_{{\max}}-2$ be a natural number, and let $\bold{u} \in \mathcal{V}_{n}$. Then it holds that
		\begin{equation}\label{eq:Aachen1}
		T^2 \bold{u} \in \mathcal{C}_{n}.
		\end{equation}
	\end{lemma}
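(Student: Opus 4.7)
My approach will be to split $T^2\bold{u} = T(T\bold{u})$ and first upgrade the hypothesis $\bold{u}\in\mathcal{V}_n$ to the intermediate statement $T\bold{u}\in\mathcal{V}_{n+1}$; a single further application of $T$, analysed via Lemma \ref{lem:2}, then converts the pointwise-strict bound on the interior skeleton into the essential-supremum strict bound demanded by $\mathcal{C}_n$. The hypothesis $n\leq N_{\max}-2$ is exactly what is needed for $\mathcal{V}_{n+1}$ to be meaningful, since it guarantees the existence of a layer $\mathcal{L}_{n+2}$ on which functions in $\mathcal{V}_{n+1}$ are required to equal $1$. The whole scheme rests on the combinatorial observation that every neighbour of a node in $\mathcal{L}_k$ sits in $\mathcal{L}_{k-1}\cup\mathcal{L}_k\cup\mathcal{L}_{k+1}$, a direct consequence of the iterative peeling definition of the layers; in particular, neighbours of a node in $\cup_{k=1}^n\mathcal{L}_k$ lie in $\cup_{k=1}^{n+1}\mathcal{L}_k$, and neighbours of a node in $\cup_{k=n+2}^{N_{\max}}\mathcal{L}_k$ lie in $\cup_{k=n+1}^{N_{\max}}\mathcal{L}_k$.

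For the first step I would argue case by case on the layer of $j$. For $j\in\cup_{k=n+2}^{N_{\max}}\mathcal{L}_k$, the neighbour structure and the $\mathcal{V}_n$ hypothesis give $\bold{u}_i\equiv 1$ on $\mathcal{S}_i$ for every $i\in N_j$, and the partition-of-unity calculation already carried out in Equation \eqref{eq:calc} in the proof of Lemma \ref{lem:Vr} yields $(T\bold{u})_j\equiv 1$ on $\mathcal{S}_j$. For $j\in\cup_{k=1}^{n+1}\mathcal{L}_k$ I would apply the strong maximum principle to the harmonic extension $\mathcal{E}_j(\widetilde{u})$ of the boundary datum $\widetilde{u}:=\sum_{i\in N_j}\bold{u}_i\chi_j^i$ (which vanishes on $\Gamma_j^{\Ext}$ by Remark \ref{rem:1}). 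The task is to exhibit a subset of $\partial\Omega_j$ of positive measure on which $\widetilde{u}<1$: if $j\in\mathcal{L}_1$ this set is $\Gamma_j^{\Ext}$ itself, while if $j\in\mathcal{L}_k$ with $2\leq k\leq n+1$ the layer definition forces the existence of a neighbour $i'\in\mathcal{L}_{k-1}\subseteq\cup_{\ell=1}^n\mathcal{L}_\ell$, and on the arc $\Gamma_j^{i',0}$ (of positive measure by Assumption A1 and lying inside $\mathcal{S}_{i'}^{\Int}$) the partition of unity localises $\widetilde{u}$ to $\bold{u}_{i'}$, so that $\bold{u}\in\mathcal{V}_n$ yields $\widetilde{u}=\bold{u}_{i'}<1$ there. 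In either case the strong maximum principle forces $\mathcal{E}_j(\widetilde{u})<1$ throughout the open set $\Omega_j$, and hence $(T\bold{u})_j(x)<1$ for every $x\in\mathcal{S}_j^{\Int}\subset\Omega_j$, establishing $T\bold{u}\in\mathcal{V}_{n+1}$.

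For the second step I would set $\bold{v}:=T\bold{u}\in\mathcal{V}_{n+1}$ and fix any $j\in\cup_{k=1}^n\mathcal{L}_k$. By the neighbour structure every $i\in N_j$ belongs to $\cup_{k=1}^{n+1}\mathcal{L}_k$, where the $\mathcal{V}_{n+1}$ property provides $\bold{v}_i(x)<1$ for all $x\in\mathcal{S}_i^{\Int}$. The hypotheses of Lemma \ref{lem:2} are therefore met at $j$, and the lemma delivers $\text{ess}\sup_{\mathcal{S}_j}(T\bold{v})_j<1$; since $j$ was an arbitrary node of the first $n$ layers, this yields precisely $T^2\bold{u}\in\mathcal{C}_n$.

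The step I expect to be the main obstacle is the intermediate case in the first step, namely $j\in\mathcal{L}_k$ with $2\leq k\leq n+1$. One must both identify the correct neighbour $i'$ sitting one layer closer to the exterior and verify that the arc $\Gamma_j^{i',0}$, on which the partition of unity collapses $\widetilde{u}$ to $\bold{u}_{i'}$, actually lies inside the interior skeleton $\mathcal{S}_{i'}^{\Int}$ of $\Omega_{i'}$, so that the strict $\mathcal{V}_n$ bound is available there. Once this geometric bookkeeping is in place the rest is routine, the moral being that each application of $T$ pushes the ``contraction'' one layer inward, so two applications are enough to upgrade a $\mathcal{V}_n$-bound into a $\mathcal{C}_n$-bound.
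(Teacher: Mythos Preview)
Your proposal is correct and follows essentially the same approach as the paper: first observe that $\bold{v}:=T\bold{u}\in\mathcal{V}_{n+1}$, then apply Lemma~\ref{lem:2} at each $j\in\cup_{k=1}^{n}\mathcal{L}_k$ using the fact that every neighbour of such a $j$ lies in $\cup_{k=1}^{n+1}\mathcal{L}_k$. The only difference is one of exposition: the paper simply asserts $\bold{v}\in\mathcal{V}_{n+1}$ (tacitly reusing the inductive step from the proof of Lemma~\ref{lem:Vr}), whereas you spell out that step explicitly, including the case analysis on the layer of $j$ and the identification of the arc $\Gamma_j^{i',0}\subset\mathcal{S}_{i'}^{\Int}$ on which the strict bound is available.
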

	\begin{proof}
		Let $\bold{v}=T\bold{u}$ and let $\bold{w}=T^2\bold{u}$. Since $\bold{u} \in \mathcal{V}_n$ and $\Vert T \Vert_{\rm{OP}, \infty} \leq 1$, we obtain that $\Vert \bold{w}\Vert_{\infty} \leq \Vert \bold{u}\Vert_{\infty} \leq 1$. Therefore, we need only show that $\text{ess}\sup_{\mathcal{S}_j}\bold{w}_j < 1$ for all $j \in \cup_{k=1}^{n}\mathcal{L}_k$.
		
		Let $j \in \cup_{k=1}^{n}\mathcal{L}_k$ and let $\ell \in N_j$ be any neighbouring index. By the definition of the layers, we know that $\ell \in \cup_{k=1}^{n+1}\mathcal{L}_k$. On the other hand, since $\bold{v} \in \mathcal{V}_{n+1}$, we know that $\bold{v}_{\ell}(x) < 1 $ for all $x \in \mathcal{S}_{\ell}^{\Int}$. Applying Lemma \ref{lem:2} immediately yields that $\text{ess}\sup_{\mathcal{S}_j}\bold{w}_j < 1$. Since $j \in \cup_{k=1}^{n}\mathcal{L}_k$ was arbitrary, we conclude that $\bold{w} \in C_n$.
	\end{proof}

	\subsubsection{Convergence results}\label{sec:ConvergenceResults}
	
	We are now ready to state our main results.
	
	\begin{theorem}\label{thm:1}
		Let $\Omega= \cup_{j=1}^N \Omega_j$ be a domain with $N_{\max}$ layers and let $\bold{u} \in \mathcal{V}_{N_{\max}}$. Then it holds that
		\begin{equation}\label{eq:Main1}
		T \bold{u} \in \mathcal{C}_{N_{\max}}.
		\end{equation}
	\end{theorem}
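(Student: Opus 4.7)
My plan is to observe that Theorem \ref{thm:1} is essentially a direct corollary of Lemma \ref{lem:2} once we unpack what membership in $\mathcal{V}_{N_{\max}}$ means. The total number of layers is $N_{\max}$, so the set of indices $\cup_{k=1}^{N_{\max}} \mathcal{L}_k$ coincides with $\{1,\ldots,N\}$ and the second clause in the definition of $\mathcal{V}_{N_{\max}}$ is vacuous. Hence, saying $\bold{u}\in\mathcal{V}_{N_{\max}}$ means precisely $\Vert\bold{u}\Vert_\infty\leq 1$ together with $\bold{u}_i(x)<1$ for every $i\in\{1,\ldots,N\}$ and every $x\in\mathcal{S}_i^{\Int}$.

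Next I would verify the norm bound $\Vert T\bold{u}\Vert_\infty\leq 1$. This is an immediate consequence of Lemma \ref{lem:Hassan} combined with Lemma \ref{lem:Vr} for $n=1$: the latter gives $T\bold{1}\in\mathcal{V}_1$, hence $\Vert T\bold{1}\Vert_\infty\leq 1$, and Lemma \ref{lem:Hassan} then yields $\Vert T\Vert_{\rm OP,\infty}\leq 1$. Consequently $\Vert T\bold{u}\Vert_\infty\leq\Vert\bold{u}\Vert_\infty\leq 1$.

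The main step is to apply Lemma \ref{lem:2} subdomain by subdomain. Fix an arbitrary index $j\in\{1,\ldots,N\}$ and set $\bold{w}:=T\bold{u}$. For any neighbour $i\in N_j$, the first paragraph tells us that $\bold{u}_i(x)<1$ for every $x\in\mathcal{S}_i^{\Int}$, so the hypothesis of Lemma \ref{lem:2} is satisfied at the subdomain $\Omega_j$. The lemma therefore yields
\begin{equation*}
\text{\rm ess}\sup_{\mathcal{S}_j}\bold{w}_j<1.
\end{equation*}
Because $j$ was arbitrary, this inequality holds for every $j\in\{1,\ldots,N\}=\cup_{k=1}^{N_{\max}}\mathcal{L}_k$, and combined with the norm bound from the previous paragraph this is exactly the definition of membership in $\mathcal{C}_{N_{\max}}$. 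Hence $T\bold{u}\in\mathcal{C}_{N_{\max}}$, as claimed.

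There is really no serious obstacle here: the theorem is a bookkeeping statement built on top of Lemma \ref{lem:2}. The only thing one has to be careful about is the vacuous second clause in the definition of $\mathcal{V}_{N_{\max}}$ and the fact that Lemma \ref{lem:2} already handles the delicate issue of the endpoints of the skeleton arcs via the Schwarz lemma argument, so no additional care at the boundary of $\mathcal{S}_j$ is required in this proof.
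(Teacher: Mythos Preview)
Your proof is correct and follows essentially the same route as the paper: both arguments reduce the theorem to a direct application of Lemma~\ref{lem:2} at every subdomain $\Omega_j$, using that membership in $\mathcal{V}_{N_{\max}}$ forces $\bold{u}_i(x)<1$ on $\mathcal{S}_i^{\Int}$ for \emph{all} $i$. You are slightly more explicit than the paper in separately verifying the norm bound $\Vert T\bold{u}\Vert_\infty\leq 1$ and in noting that the second clause of $\mathcal{V}_{N_{\max}}$ is vacuous, but these are expository refinements rather than a different approach.
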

	\begin{proof}
		Let $\bold{w}=T\bold{u}$. We must show that for all $j \in \{1, \ldots, N\}$ it holds that $\text{ess}\sup_{\mathcal{S}_j}\bold{w}_j < 1$.
		
		To this end, let $j \in \{1, \ldots, N\}$. Since $\bold{u} \in \mathcal{V}_{N_{\max}}$ it follows that that for all neighbouring indices $i \in N_j$ it holds that $\bold{u}_i(x) < 1$ for all $x \in \mathcal{S}_i^{\Int}$. Applying Lemma \ref{lem:2} immediately yields that $\text{ess}\sup_{\mathcal{S}_j}\bold{w}_j < 1$. Since $j \in \{1, \ldots, N\}$ was arbitrary, this completes the proof.
	\end{proof}

	Theorem \ref{thm:1} has the following important consequence.
	\begin{corollary}\label{cor:1.1}
		Let $\Omega= \cup_{j=1}^N \Omega_j$ be a domain with $N_{\max}$ layers. Then it holds that
		\begin{equation*}
		\Vert T^{N_{\max}+1} \Vert_{\rm{OP}, \infty} < 1.
		\end{equation*}
	\end{corollary}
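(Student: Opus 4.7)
The plan is to chain together the three main ingredients already proved: Lemma \ref{lem:Hassan}, Lemma \ref{lem:Vr}, and Theorem \ref{thm:1}. The point of Corollary \ref{cor:1.1} is essentially to repackage these results into a quantitative contraction statement about the operator norm of a power of $T$.

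First I would invoke Lemma \ref{lem:Hassan} with $n = N_{\max}+1$ to reduce the operator norm to a single value, namely
\[
\Vert T^{N_{\max}+1}\Vert_{\text{OP},\infty} = \Vert T^{N_{\max}+1}\bold{1}\Vert_{\infty}.
\]
This is the crucial monotonicity-based reduction: instead of analysing the supremum over the unit ball in $\Pi_{j=1}^N C^0(\mathcal{S}_j^{\text{int}})$, we only need to control the action of $T^{N_{\max}+1}$ on the constant vector $\bold{1}$.

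Next, I would apply Lemma \ref{lem:Vr} with $n = N_{\max}$ to obtain $T^{N_{\max}}\bold{1} \in \mathcal{V}_{N_{\max}}$. Then I apply Theorem \ref{thm:1} to $\bold{u} := T^{N_{\max}}\bold{1}$, which gives
\[
T^{N_{\max}+1}\bold{1} = T\bigl(T^{N_{\max}}\bold{1}\bigr) \in \mathcal{C}_{N_{\max}}.
\]
Since the layers $\mathcal{L}_1,\ldots,\mathcal{L}_{N_{\max}}$ exhaust the vertex set $\mathcal{V} = \{1,\ldots,N\}$, membership in $\mathcal{C}_{N_{\max}}$ by definition means $\text{ess}\sup_{\mathcal{S}_j}\bigl(T^{N_{\max}+1}\bold{1}\bigr)_j < 1$ for every $j \in \{1,\ldots,N\}$.

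Finally, since $\Vert T^{N_{\max}+1}\bold{1}\Vert_{\infty}$ is the maximum over the finite index set $j=1,\ldots,N$ of essential suprema, each of which is strictly less than $1$, the maximum itself is strictly less than $1$. Combining with the first step completes the argument. No step here is truly an obstacle; the only point that requires a moment of care is observing that strict inequality for each of finitely many $j$ passes to the maximum, which is trivial precisely because $N$ is finite (whereas the corresponding statement on each individual $\mathcal{S}_j$ would fail without the finite-max reduction since a pointwise strict inequality does not in general give a strict essential supremum).
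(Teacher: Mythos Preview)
Your proof is correct and follows essentially the same approach as the paper's own argument. The paper's proof is slightly more compressed—it invokes Theorem~\ref{thm:1} directly to assert $T^{N_{\max}+1}\bold{1}\in\mathcal{C}_{N_{\max}}$ without explicitly naming Lemma~\ref{lem:Vr}, whereas you spell out that intermediate step; both routes are logically identical.
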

	\begin{proof}
		Lemma \ref{lem:Hassan} implies that $\Vert T^{N_{\max}+1} \Vert_{\rm{OP}, \infty}= \Vert T^{N_{\max}+1} \bold{1}\Vert_{\infty}$.
		
		Theorem \ref{thm:1} implies that $T^{N_{\max}+1} \bold{1} \in \mathcal{C}_{N_{\max}}$. By definition of the set $\mathcal{C}_{N_{\max}}$ we obtain that $\Vert T^{N_{\max}+1} \Vert_{\rm{OP}, \infty}= \Vert T^{N_{\max}+1} \bold{1}\Vert_{\infty} < 1$.
	\end{proof}
	
	\begin{remark}
		Consider the setting of Theorem \ref{thm:1}. The relation \eqref{eq:Main1} is sharp for $N_{\text{max}} >1$. See also Example \ref{ex:chain} for the case $N_{\max}=1$.
	\end{remark}
	
	\begin{remark}
		Consider the error equation \eqref{eq:5}. Corollary \ref{cor:1.1} implies that 
		\begin{equation*}
		\lim_{n \to \infty} \Vert\bold{e}^{n+1}\Vert_{\infty}= \lim_{n \to \infty} \Vert T \bold{e}^{n}\Vert_{\infty}= \lim_{n \to \infty} \Vert T^{n+1} \bold{e}^0 \Vert_{\infty}\leq \lim_{n \to \infty} \Vert T^{n+1}\Vert_{\rm{OP}, \infty} \Vert\bold{e}^0\Vert_{\infty}= 0.
		\end{equation*}
	\end{remark}
	
	Theorem \ref{thm:1} also allows us to prove that the global Laplace problem \eqref{eq:1} and the domain decomposition problem \eqref{eq:2} are indeed equivalent.
	\newpage
	\begin{theorem}\label{lemma:2.1}
		Equations \eqref{eq:1} and \eqref{eq:2} are equivalent. Therefore, the PSM converges to the solution of the global Laplace problem \eqref{eq:1}.
	\end{theorem}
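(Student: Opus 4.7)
The plan is to decompose the statement into an equivalence result and a convergence result, treating the equivalence as the real content and extracting the PSM convergence directly from Corollary \ref{cor:1.1}. I would first prove the easy forward direction: given a solution $u$ of \eqref{eq:1}, I would set $u_j := u\vert_{\Omega_j}$ and verify each condition in \eqref{eq:2}. Harmonicity and the exterior boundary condition are inherited trivially. For the interior boundary condition, I would invoke the partition of unity identity \eqref{eq:new}: for $x \in \partial \Omega_j \setminus \Gamma_j^{\rm ext}$,
\begin{equation*}
u_j(x) = u(x) = u(x)\sum_{k \in N_j}\chi_j^k(x) = \sum_{k \in N_j} u_k(x)\,\chi_j^k(x),
\end{equation*}
where $u_k(x)$ is well defined on the support of $\chi_j^k$ because that support lies in $\overline{\Omega_k}$.

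The converse direction would reduce to proving uniqueness of solutions to \eqref{eq:2}. Given two solutions $(u_j)$ and $(\tilde u_j)$, I would form $e_j := u_j - \tilde u_j$, which satisfies the homogeneous version of \eqref{eq:2}. Setting $\bold e_j := e_j\vert_{\mathcal S_j}$ and repeating the computation of Lemma \ref{lem:2.8}, I would obtain the time-independent fixed-point equation $\bold e = T\bold e$. Iterating gives $\bold e = T^{N_{\max}+1}\bold e$, and therefore
\begin{equation*}
\Vert \bold e\Vert_\infty \leq \Vert T^{N_{\max}+1}\Vert_{\rm OP,\infty}\,\Vert\bold e\Vert_\infty,
\end{equation*}
so that Corollary \ref{cor:1.1} forces $\Vert\bold e\Vert_\infty = 0$.

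The key remaining step is to upgrade $\bold e \equiv 0$ on the skeletons to $e_j \equiv 0$ on the whole of each subdomain. Here I would use the geometric observation that for every $k \in N_j$ the skeleton piece $\mathcal S_{k,j}$ covers $\partial \Omega_j \cap \Omega_k$ (this is exactly how the sets $\Gamma_j^{k,i}$ were chosen to partition the interior boundary). Consequently $\bold e_k \equiv 0$ implies $e_k \equiv 0$ on $\partial \Omega_j \cap \Omega_k$, so the interior boundary datum $\sum_{k \in N_j} e_k\,\chi_j^k$ vanishes on $\partial \Omega_j \setminus \Gamma_j^{\rm ext}$. Combined with $e_j = 0$ on $\Gamma_j^{\rm ext}$, the maximum principle gives $e_j \equiv 0$ in $\Omega_j$. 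Since \eqref{eq:1} has a unique solution $u^\ast$ (by classical theory) whose restrictions solve \eqref{eq:2}, uniqueness forces any solution of \eqref{eq:2} to coincide with $(u^\ast\vert_{\Omega_j})_{j=1}^N$, closing the equivalence.

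Convergence of the PSM is then a short epilogue: for any continuous initialization the error vector $\bold e^n = T^n \bold e^0$ on the skeletons satisfies $\Vert\bold e^n\Vert_\infty \to 0$ by the remark following Corollary \ref{cor:1.1} (writing $n = q(N_{\max}+1) + r$ and using submultiplicativity), and the same skeleton-to-subdomain upgrade used in the uniqueness step promotes this to convergence of $u_j^n$ to $u_j$ uniformly on each $\Omega_j$. The main subtle point of the entire argument is precisely this upgrade: everything else is either a direct application of Corollary \ref{cor:1.1} or a one-line manipulation of the partition of unity.
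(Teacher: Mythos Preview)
Your proposal is correct and follows essentially the same approach as the paper: restrict the global solution to verify the forward direction, and for the converse reduce uniqueness of \eqref{eq:2} to the fixed-point identity $\bold e = T\bold e$, iterate to $\bold e = T^{N_{\max}+1}\bold e$, and invoke the norm bound from Corollary~\ref{cor:1.1} (the paper cites Theorem~\ref{thm:1}, which amounts to the same thing). Your treatment is in fact more explicit than the paper's on the skeleton-to-subdomain upgrade---the paper simply asserts ``$\bold w \equiv 0$ which implies $v_j = \tilde v_j$'' without spelling out why vanishing on the skeletons forces vanishing of the full local errors, whereas you correctly identify and justify this step via the covering property $\mathcal S_{k,j} \supset \partial\Omega_j \cap \Omega_k$ and the maximum principle.
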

	\begin{proof}
		It is well known that there exists a unique solution $u \in C^0(\Omega)$ to Equation \eqref{eq:1}. A direct calculation then shows that the restrictions of this function $u\vert_{\Omega_j}, ~j \in \{1, \ldots, N\}$ on each subdomain $\Omega_j$ also satisfy Equation \eqref{eq:2}. 
		Therefore, it suffices to show that Equation \eqref{eq:2} must have a unique solution. 
		
		We argue by contradiction. For each $j \in \{1, \ldots, N\}$, let $v_j, \widetilde{v}_j \in C^0(\Omega_j)$ be {two distinct} solutions to
		\eqref{eq:2} on the subdomain $\Omega_j$ and let $w_j:= v_j - \widetilde{v}_j$. It follows that
		\begin{align} \nonumber
		\Delta w_j &=0 \quad \hspace{2.2cm} \text{in }\Omega_j,\\
		w_j&=0 \quad \hspace{2.2cm}\text{on } \Gamma_j^{\text{ext}}, \label{eq:equivalence}\\ 
		w_j&=\sum_{i \in N_j}w_i\chi_j^{i} \quad \hspace{0.85cm}\text{on }\Gamma_j^{{\Int}}. \nonumber
		\end{align}
		
		Next, we define the function $\bold{w} \in \Pi_{j=1}^N C^0(\mathcal{S}^{\rm int}_j)$ by taking, for each $j=1, \ldots, N$, the restriction of the function $w_j$ on the skeleton $\mathcal{S}_j$. It follows from Equation \eqref{eq:equivalence} and Lemma \ref{lem:2.8} that $\bold{w}$ satisfies the fixed-point equation $\bold{w}= T \bold{w}$. This implies in particular that 
		\begin{align}\label{eq:fixed}
		\bold{w}= T^{N_{\max}+1} \bold{w},
		\end{align}
		where $N_{\max}$ is the number of layers in the domain $\Omega$. Equation \eqref{eq:fixed} now yields that $\Vert T^{N_{\max}+1} \Vert_{\text{OP}, \infty} \geq 1$, which contradicts Theorem \ref{thm:1}. Therefore, we must have that $\bold{w} \equiv0$ which implies that $v_j= \widetilde{v}_j$ for all $j=1, \ldots, N$, and thus Equation \eqref{eq:2} has a unique solution.
	\end{proof}
	
	\subsubsection{Discussion on convergence results and related examples} \label{sec:Examples}
	
	In this section, we discuss the convergence results obtained in Section \ref{sec:conv} and use two examples to explain the heuristic behind them and demonstrate how they can be considered an extension of existing results given in the literature \cite{CiaramellaGander2,CiaramellaGander}.
	In particular, in Example \ref{ex:globular} we show how our results are capable of precisely tracking the propagation of the contraction across the different layers comprising the domain $\Omega$ in the course of the iterations. In Example \ref{ex:chain}, we consider a problem defined on a linear chain of collinear subdomains and demonstrate how our results can be considered an extension of existing convergence results in the literature.
	
	\begin{example}\label{ex:globular}
		We first consider a domain $\Omega$ consisting of the union of 51 subdomains grouped in 4 layers as shown in Figure \ref{fig:ex_glob_1} (left). We will describe visually the results obtained in Section \ref{sec:conv} as they apply to this particular choice of domain.
		
		\begin{figure}
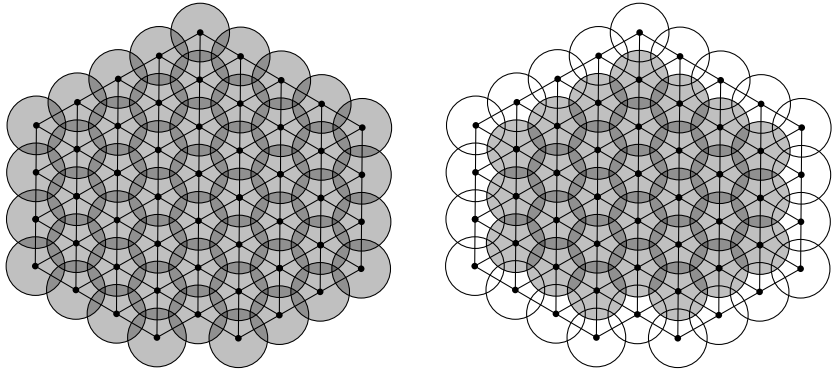

			\centering
			\input{fig1.tex}
			\input{fig2.tex}
			\caption{Left: Domain $\Omega$ and corresponding graph.
				Initial guess $\bold{u}^0=1$ (grey) on $\partial \Omega_j$ for $j=1,\dots,N$.
				Right: Result after the first iteration. The subdomains in white (layer 1)
				are the ones that began to experience a contraction, that is
				$\bold{u}^1 \in \mathcal{V}_1$.}
			\label{fig:ex_glob_1}
		\end{figure}
		
		To do so, we consider as initialization the function $\bold{u}^0 = \bold{1}$
		and follow the propagation of the contraction of the error through the course of the first five iterations of the PSM.
		We represent in grey the subdomains where the current approximation satisfies $\bold{u}^n_j=\bold{1}_j$.
		Moreover, we depict in white all the subdomains where the current approximation satisfies
		$\bold{u}^n_j(x)<1$ for all $x \in \mathcal{S}_j^{\Int}$
		and in red the subdomains such that $\text{ess}\sup_{\mathcal{S}_j}\bold{u}^n_j<1$.
		Since the initialisation is $\bold{u}^0 = \bold{1}$, at the iteration $0$ all the subdomains are grey;
		Figure \ref{fig:ex_glob_1} (left). After the first iteration, the current approximation
		is given by $\bold{u}^1 = T \bold{u}^0$. Thanks to the external Dirichlet boundary condition,
		the subdomains in the first layer $\mathcal{L}_1$ have begun to experience a contraction.
		Hence, $\bold{u}^1 \in \mathcal{V}_1$ and the first layer is then white, while all the other layers 
		are still grey; see Figure \ref{fig:ex_glob_1} (right).
		
		After the second iteration, the current approximation is $\bold{u}^2 = T \bold{u}^1$. The contraction propagates and thus the subdomains in layer 2 are now white while the ones in layers 3 and 4 are still grey. Hence, $\bold{u}^2 \in \mathcal{V}_2$ in agreement with Lemma \ref{lem:Vr}; see Figure \ref{fig:ex_glob_2} (left).
		\begin{figure}
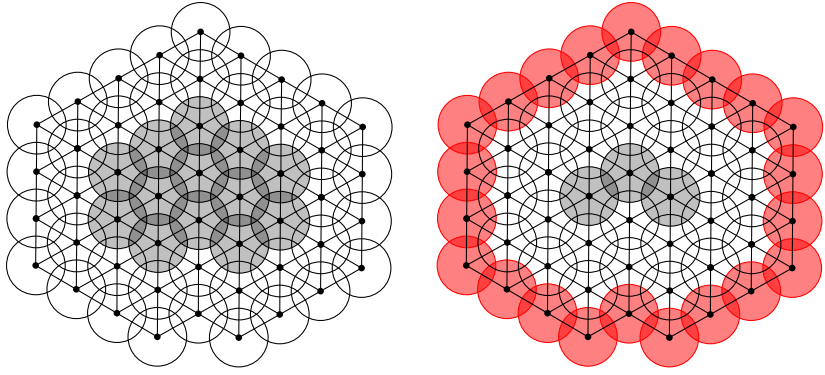

			\centering
			\input{fig3.tex}
			\input{fig4.tex}
			\caption{Left: Result after the second iteration. The subdomains in layer 2 have begun to experience a contraction and hence are represented in white so that $\bold{u}^2 \in \mathcal{V}_2$.
				Right: Result after the third iteration. The contraction has now propagated to the third layer, which is displayed in white so that $\bold{u}^3 \in \mathcal{V}_3$.
				Furthermore, the subdomains in Layer 1 are represented in red
				since they are contracting everywhere on the skeleton, i.e., $\text{ess}\sup_{\mathcal{S}_j}\bold{u}^3_j<1$ for all $j \in \mathcal{L}_1$.}
			\label{fig:ex_glob_2}
		\end{figure}
		
		The third iteration yields $\bold{u}^3 = T \bold{u}^2$ and reveals new behaviour. On the one hand, the contraction is still propagating towards the last layer. Layer 3 has thus begun to experience a contraction and is displayed in white indicating that $\bold{u}^3 \in \mathcal{V}_3$. On the other hand, since all the subdomains in Layer 1 were surrounded by white subdomains, Layer 1 is now displayed in red in accordance with Lemma \ref{lem:2}. Hence $\bold{u}^3 \in \mathcal{C}_1$.	Only the three subdomains in layer 4 are still grey; see Figure \ref{fig:ex_glob_2} (right).
		
		The fourth iteration yields $\bold{u}^4 = T \bold{u}^3$.
		The contraction has propagated to Layer 4, which is displayed in white, and $\bold{u}^4 \in \mathcal{V}_4$ in accordance with Lemma \ref{lem:Vr}.
		Furthermore, all the subdomains in Layer 2 which were previously surrounded by white or red subdomains at iteration 3 are now displayed in red in accordance with Lemma \ref{lem:2}. Thus, $\bold{u}^4 \in \mathcal{C}_2$; see Figure \ref{fig:ex_glob_3} (left).
		\begin{figure}
			\centering
			\input{fig5.tex}
			\input{fig6.tex}
			\caption{Left: Result after the fourth iteration.
				The contraction has propagated to the fourth layer, which is displayed in white. Since all subdomains have begun to experience a contraction, we have $\bold{u}^4 \in \mathcal{V}_4$. Moreover, the subdomains in Layer 1 and Layer 2 are contracting everywhere on the skeleton, i.e.,, $\text{ess}\sup_{\mathcal{S}_j}\bold{u}^4_j<1$ for all $j \in \mathcal{L}_1\cap\mathcal{L}_2$.
				Thus, Layers 1 and 2 are displayed in red.
				Right: Result after the fifth iteration. All the subdomains are
				contracting everywhere in their skeletons and are therefore displayed in red. It therefore holds that $\bold{u}^5 \in \mathcal{C}_4$ and after $N_{\max}+1=5$ iterations
				we finally observe a contraction in the infinity norm.}
			\label{fig:ex_glob_3}
		\end{figure}
		
		Finally, we consider the fifth iteration, which yields $\bold{u}^5 = T \bold{u}^4$.
		Since all the subdomains in Layers 3 and 4 were white at the previous iteration, they are now displayed in red (see Figure \ref{fig:ex_glob_3} (right)) indicating that $\bold{u}^5 \in \mathcal{C}_5$. In other words,  we observe a contraction in the infinity norm after precisely $N_{\max}+1=4+1=5$ iterations. This is in agreement with Theorem \ref{thm:1}.
	\end{example}
	
	\begin{example}\label{ex:chain}
		Next, we consider a domain $\Omega$ consisting of a linear chain of collinear subdomains (unit disks) as shown in Figure \ref{fig:linear}.
		\begin{figure}
			\centering
			\begin{tikzpicture}[scale=0.6, every node/.style={scale=0.85}]
			\draw[black] (-2.75,0.75) circle (1);
			\node at (-2.75,0.75) {$\Omega_1$};
			
			\draw[black,dashed] (-1.25,0.75) circle (1);
			\node at (-1.25,0.75) {$\cdots$};
			\draw[black,dashed] (0.25,0.75) circle (1);
			\node at (0.25,0.75) {$\cdots$};
			
			\draw[black] (1.75,0.75) circle (1);
			\node at (1.75,0.75) {$\Omega_j$};
			
			\draw[black,dashed] (3.25,0.75) circle (1);
			\node at (3.25,0.75) {$\cdots$};
			\draw[black,dashed] (4.75,0.75) circle (1);
			\node at (4.75,0.75) {$\cdots$};
			
			\draw[black] (6.25,0.75) circle (1);
			\node at (6.30,0.75) {$\Omega_N$};
			\end{tikzpicture}
			\caption{Example of a linear chain of $N$ collinear subdomains (unit disks).}
			\label{fig:linear}
		\end{figure}
		The Schwarz operator $T$ corresponding to this geometry is the block-tridiagonal operator
		\begin{equation*}
		T = \begin{bmatrix}
		0& P_{12}& & &  & \\
		P_{21}& 0& P_{23}& &  & \\
		& P_{32}& 0& P_{34}&  & \\
		& & \ddots & \ddots & \ddots &\\
		& & & P_{N-1N-2}& 0 & P_{N-1N}\\
		& & & & P_{NN-1} & 0 \\
		\end{bmatrix}.
		\end{equation*}
		Notice how this operator resembles the Schwarz iteration operator constructed in Fourier space in the literature \cite[Section 3.1]{CiaramellaGander4}; see also~\cite{CiaramellaGander}.
		In \cite{CiaramellaGander4} the Schwarz operator is denoted by $T_{1D}$ and it has been
		shown that $\| T_{1D} \|_{\infty}=1$, where $\| \cdot \|_{\infty}$ is the usual infinity matrix norm. This is in agreement with Lemmas \ref{lem:Hassan} and \ref{lem:Vr}. 
		
		It is clear from Figure \ref{ex:chain} that the considered chain is composed of a single layer since $\partial \Omega_j \cap \partial \Omega \neq \emptyset$ for all $j=1,\dots,N$. Lemma \ref{lem:Vr} implies that $T \bold{1} \in \mathcal{V}_1$ and Theorem \ref{thm:1} guarantees that $T^2 \bold{1} \in \mathcal{C}_1$, which implies that
		one observes a contraction in the infinity norm after two iterations. This result is conservative for chains of subdomains composed of only one layer. In fact, it has been proved in \cite{CiaramellaGander2}
		that the PSM for the solution of one-layer chains of subdomains contracts at each iteration;
		see also~\cite{CiaramellaGander3}.
		This result can be obtained also using our framework. Indeed, using Lemma \ref{lem:Hassan}, we can compute
		\begin{equation}\label{eq:chain1}
		\medmuskip=0.5mu
		\thinmuskip=0.5mu
		\thickmuskip=0.5mu
		\nulldelimiterspace=1.2pt
		\scriptspace=1.2pt    
		\arraycolsep1.2em
		\begin{split}
		&\Vert T \Vert_{\rm{OP}, \infty} 
		= \| T \bold{1} \|_{\infty} 
		= \max_{j = 1 , \dots , N} \text{ess }\sup_{\mathcal{S}_j} | (T \bold{1})_j| \\
		&=\max \Bigl\{ \, \text{ess }\sup_{\mathcal{S}_1} | (P_{12} \bold{1}_2) | \, , \, 
		\max_{j = 2 , \dots , N-1} \text{ess }\sup_{\mathcal{S}_j} | (P_{jj-1} \bold{1}_{j-1} + P_{jj+1} \bold{1}_{j+1}) |  \, , \, 
		\text{ess }\sup_{\mathcal{S}_N} | (P_{NN-1} \bold{1}_{N-1}) | \, \Bigr\}.
		\end{split}
		\end{equation}
		The definition of the operators $P_{j,k}$ implies that the functions
		\begin{equation*}
		\begin{split}
		v_1 &:= (P_{12} \bold{1}_{2}), \\
		v_j &:= (P_{jj-1} \bold{1}_{j-1} + P_{jj+1} \bold{1}_{j+1}), \; j=2,\dots,N-1,\\
		v_N &:= (P_{N-1N} \bold{1}_{N}), \\
		\end{split}
		\end{equation*}
		solve the problems
		\begin{equation*}
		\Delta v_j =0 \; \text{in $\Omega_j$}, \quad
		v_j =1 \; \text{on $\Gamma_j^{\Int}$}, \quad
		v_j =0 \; \text{on $\Gamma_j^{{\rm ext}}$},
		\end{equation*}
		for each $j=1,\dots,N$.
		Notice that \eqref{eq:chain1} is exactly given in \cite[Theorem 7, formula (24)]{CiaramellaGander2}.
		Following the same arguments as in \cite{CiaramellaGander2}, we notice that
		\begin{equation*}
		\text{ess}~\sup_{\mathcal{S}_1} v_1 = \text{ess }\sup_{\mathcal{S}_N} v_N \leq \text{ess }\sup_{\mathcal{S}_j} v_j < 1,
		\end{equation*}
		for any $j=1,\dots,N$, where the strict inequality follows by the maximum principle. 
		The value $\text{ess }\sup_{\mathcal{S}_j} v_j$ can then be computed explicitly as
		in \cite[Sections 4 and 5.1]{CiaramellaGander2}.
		We therefore obtain that $\Vert T \Vert_{\text{OP}, \infty} = \rho(\alpha) < 1$,
		where $\rho(\alpha)$ is exactly the estimate of the contraction factor given in \cite[Section 5.1]{CiaramellaGander2}.
		
		We also remark that it is possible in the same way to analyse other chains of fixed-sized subdomains characterized by only one layer such as the examples given in~\cite{CiaramellaGander2,CiaramellaGander3}.
	\end{example}
	
	\section{Extensions to more general settings}\label{sec:Extensions}
	
	Recall that we began Section \ref{sec:Geom} by imposing constraints A1) and A2) on the geometry of the domain $\Omega$. The preceding analysis confirms our claim that the constraint A1) is imposed purely to assist clarity of exposition, and can be dropped by adopting minor changes in, for instance, the definition of the index set $N_{jk}, j =1, \ldots, N, k \in N_j$.
	
	On the other hand, the constraint A2) which limits the number of simultaneously intersecting disks to three, significantly restricts the generality of our analysis. A second important limitation of our analysis is the fact that it is, a priori, restricted to disks in two dimensions and it is not yet clear if our results can be extended to domains in three dimensions consisting of the union of intersecting balls. The goal of this section is to address these shortcomings and extend the preceding analysis.
	
	\subsection{Extension to arbitrary number of intersections}\label{sec:Extensions2}
	We assume the geometric setting introduced in Section \ref{sec:Geom} and remove the constraints A1) and A2). In other words, we have assumed that the disks $\Omega_i, ~ i=1, \ldots, N$ may have any type and any number of simultaneous intersections. Our goal is now to outline step-by-step the changes that must be made to the analysis of Section \ref{sec:2} so that the convergence results still hold.

	\subsubsection{Notation}
	
	Let $j \in \{1, \ldots, N\}$. Given the disk $\Omega_j$ we define as before the sets
	\begin{equation*}
	\Gamma_j^\text{ext}:= \partial \Omega_j \cap \partial \Omega,
	\quad
	\Gamma_j^{\Int}:= \overline{\partial \Omega_j \setminus \Gamma_j^\text{ext}}.
	\end{equation*}
	
	The first complication arises when we attempt to define index sets characterising the number and types of intersections. In Section \ref{sec:Geom}, our task was straightforward because we assumed that the domain $\Omega$ consisted of at most triple intersecting subdomains but we have no longer imposed this constraint. In order to deal with the current more general setting, we define these index sets in an inductive manner.
	
	\begin{enumerate}
		\item[Step 1)] For each ${j_0} =1, \ldots, N$, we define the index set of neighbours ${N}_{j_0}$ as
		\begin{equation*}
		{N}_{j_0}:= \{j_1 \in \mathbb{N} \colon ( j_1\neq j_0) \land (\Omega_{j_1} \cap \Omega_{j_0} \neq \emptyset)\}.
		\end{equation*}
		
		\item[Step 2)] For each ${j_0} =1, \ldots, N$ and each $j_1 \in N_{j_0}$, we define the index set of triple intersections as
		\begin{align*}
		{N}_{j_0 j_1} &:= \Big\{j_2\in \{1, \ldots, N\} \colon\\
		& (j_2\neq j_1, {j_0}) \land (\Omega_{j_2}\cap \Omega_{j_1}  \cap \Omega_{j_0} \text{ is a set of non-zero measure}) \Big\}.
		\end{align*}
		If $N_{j_0j_1}$ is an empty set for every $ j_1 \in N_{j_0}$ then we stop the procedure for this specific choice of $j_0$, and we set the natural number $M_{j_0}=1$.
		
		\item[Step m)] For each $\big(j_0, j_1, \ldots, j_{m-1}\big) \in \{1, \ldots, N\} \times N_{j_0} \times N_{j_0j_1}\times \ldots \times N_{j_0j_1\ldots j_{m-2}}$ we define the index set of $m+1$-ple intersections{, i.e., the set of all intersections generated by $m+1$ simultaneously intersecting subdomains} as
		\begin{align*}
		{N}_{j_0 j_1\ldots j_{m-1}} &:= \Big\{j_m\in \{1, \ldots, N\} \colon \\
		&( j_m\neq j_{m-1}, \ldots, j_1, j_0) \land \Big(\bigcap\limits_{i=0}^{i=m} \Omega_{j_i} \text{ is a set of non-zero measure}\Big)\Big\}.
		\end{align*}
		
		If $N_{j_0 j_1\ldots j_{m-1}}$ is an empty set for every $ j_{m-1} \in N_{j_0j_1\ldots j_{m-2}}$, then we stop the procedure for this specific choice of $j_0$, and we set the natural number $M_{j_0}=m-1$.
	\end{enumerate}
	
	In addition, we define the constant $M=\max_{j_0} M_{j_0}$. Intuitively, the natural number $M_{j_0}+1$ denotes the maximum number of subdomains that simultaneously intersect the boundary of subdomain $\Omega_{j_0}$. Moreover, the natural number $M+1$ denotes the maximum number of simultaneously intersecting subdomains in the entire domain $\Omega$. We say that our domain consists of at most $M+1$-ple intersections, i.e., the intersections in our domain are generated by at most $M+1$ simultaneously intersecting subdomains.
	
	\vspace{4mm}
	The next complication arises now that we attempt to define decompositions of the interior boundary $\Gamma_{j_0}^{\Int}, ~ {j_0}=1, \ldots, N$. Let $j_0\in \{1, \ldots, N\}$. Then we define for every natural number $n \in \{1, \ldots, M_{j_0} \}$ the set $X^n_{j_0}$ of $M-$tuples as
	\begin{align*}
	X^n_{j_0}= \Big\{\alpha \in \mathbb{N}^M \colon \alpha \in \big(\Pi_{i=0}^{n-1} N_{j_0\ldots j_i} \big)\times \{0\}^{M-n} \Big\}.
	\end{align*}
	
	Intuitively, the set $X^1_{j_0}$ characterises the simple intersections of subdomain $\Omega_{j_0}$, the set $X^2_{j_0}$ characterises the triple intersections of subdomain $\Omega_{j_0}$ and so on. In particular, the set $X^{M_{j_0}}_{j_0}$ characterises the intersections of the highest order, i.e., the $M_{j_0}+1$-ple intersections of subdomain $\Omega_{j_0}$. It is natural to define also for every ${j_0} \in \{1, \ldots, N\}$ the set $Z_{j_0}:= \bigcup_{i=1}^{M_{j_0}} X_{j_0}^{i}$.
	
	\vspace{4mm}
	Let $j \in \{1, \ldots, N\}$. For any multi-index $\alpha \in Z_j$, we define the natural number $M_{\alpha}$ as 
	\begin{align*}
	M_{\alpha}= \text{card}\left\{ \alpha_i \neq 0\right\}.
	\end{align*}
	
	In other words, $M_{\alpha}$ denotes the number of non-zero entries in a given multi-index $\alpha \in Z_j$. We can now define for all multi-indices $\alpha \in Z_{j}$ the set $\Gamma_{j}^{\alpha} \subset \partial \Omega_j$ as
	\begin{align*}
	\Gamma_{j}^{\alpha}:=
	\text{int} \Big\{ x \in \partial \Omega_j \colon \big(x \in \Omega_{\alpha_j} \forall j=1, \ldots, M_{\alpha}\big) \land \big(x \notin \Omega_{\ell} \forall \ell \in N_{j} \text{ with } \ell \notin \alpha\big)\Big\},
	\end{align*}
	
	where we use an abuse of notation to write $\ell \notin \alpha$, which simply means that $\ell \neq \alpha_i$ for all $i =1, \ldots, M$.
	
	These definitions now imply that 
	\begin{equation*}
	\Gamma_{j}^{{\Int}}= \overline{\bigcup\limits_{\alpha \in Z_{j}} \Gamma_{j}^{\alpha}}.
	\end{equation*}

	It is now straightforward to define also the skeletons associated with the subdomain $\Omega_j$. 
	Indeed, let $j \in \{1, \ldots, N\}$ and $k \in {N}_j$ be fixed. Then we define the sets $\mathcal{S}_{j, k}^{{\Int}}$ and $\mathcal{S}_{j, k}$ as
	\begin{equation*}
	\mathcal{S}_{j, k}:= \overline{\bigcup_{\substack{\alpha \in Z_{j}, \\ \alpha_1=j}}\Gamma_k^{\alpha}}, 
	\qquad
	\mathcal{S}^{\Int}_{j, k}:= \mathcal{S}_{j, k}\setminus \partial \Omega_{j}.
	\end{equation*}
	
	Similarly, we define the skeleton $\mathcal{S}_{j}$ and the interior skeleton $\mathcal{S}^{\text{int}}_{j}$ as
	\begin{equation*}
	\mathcal{S}_{j}:= \bigcup_{k \in {N}_{j}} \mathcal{S}_{j, k},
	\qquad
	\mathcal{S}_{j}^{{\Int}}:= \bigcup_{k \in {N}_{j}} \mathcal{S}_{j, k}^{{\Int}}.
	\end{equation*}
	
	It can be observed that with this notation
	
	\begin{itemize}
		\item The definition of the graph and layers of a domain remain unchanged.
		
		\item The definition of the sets $\mathcal{V}_n$ and $\mathcal{C}_n$ remains unchanged.
	\end{itemize}
	
	\vspace{4mm}
	Of course, it becomes necessary to modify the definition of the partition of unity functions. Indeed, let $j \in \{1, \ldots, N\}$. Then we define for each $k \in N_{j}$ a function $\chi_{j}^{k} \colon \partial \Omega_j \rightarrow \mathbb{R}$, continuous on $\text{int} \big(\Gamma_{j}^{\Int}\big)$, with the property that
	\begin{equation}
	\chi_{j}^{k}:= \begin{cases}
	1 \quad &\text{on } \overline{\Gamma_{j}^{\alpha}} \setminus \Gamma_{j}^{\Ext}~~~ \text{for } \alpha \in Z_{j} \text{ such that } \alpha= (k, 0, \ldots, 0),\\[0.5em]
	\in [0, 1] \quad &\text{on } \overline{{\Gamma_{j}^{\alpha}}} \setminus \Gamma_{j}^{\Ext}  ~~~\text{for } \alpha \in Z_{j} \text{ such that } (\alpha_1=k) \land  \big(\alpha \neq (k,0, \ldots, 0)\big),\\[0.5em]
	0 \quad &\text{otherwise},
	\end{cases}
	\end{equation}
	and such that
	\begin{equation*}
	\sum_{k \in {N}_{j}} \chi_{j}^{k}(x)=1 \quad \text{ for all } x \in \text{int}\big(\Gamma_{j}^{{\Int}}\big).
	\end{equation*}
	
	The harmonic extension operator $\mathcal{E}_{j}$ and the restriction operator $\mathcal{R}_{j}$ can now be defined analogously to Section \ref{sec:2}, keeping in mind the notation we have just introduced.

	\subsubsection{Operator formulation and convergence analysis}
	
	Let us first consider the developments of Section \ref{sec:2.2}. The definition of the error functions $\bold{e}^n, ~ n \in \mathbb{N}$ in Equation \ref{def:Error} remains unchanged. Indeed, each element $\bold{e}_{j}^n, ~ j =1 , \ldots, N, ~ n \in \mathbb{N}$ is now defined as
	\begin{align*}
	\bold{e}_{j}^n:= e^n_j \vert_{\mathcal{S}_j}.
	\end{align*}
	
	Finally, we consider the lemmas and theorems we have stated in Sections \ref{sec:conv} and \ref{sec:ConvergenceResults}. Once again, we observe that the majority of the technical lemmas still hold under the current setting. In fact only the proof of Lemma \ref{lem:2} needs to be modified but the changes are minor and occur at only one point.
	
	Indeed, consider the setting and proof of Lemma \ref{lem:2}. We must modify the proof of Property (iii) of the boundary function $h$: i.e., the proof that $h(x) < 1$ for all $x \in \text{int} \big(\Gamma_j^{\Int}\big)$.
	
	\vspace{3mm}
	Let $x \in \text{int}(\Gamma_j^{\Int})$. As before, we distinguish two cases.
	\begin{enumerate}
		
		\item Suppose $x \in \overline{\Gamma_{j}^{\alpha}}$ where $\alpha \in Z_{j}$ satisfies $\alpha_2=\alpha_3, \ldots, =\alpha_{M}=0$ and $\alpha_1 = k$ for some $k \in N_j$. In other words, we consider the case where $x$ belongs to the part of the boundary $\partial \Omega_j$ that is a simple intersection. We distinguish two cases.
		
		\begin{itemize} 
			\item $x \in \Gamma_j^{\alpha}$. Recalling the definition of the interior skeleton, we obtain also that $x \in \mathcal{S}_{k, j}^{\Int} \subset \mathcal{S}_k^{\Int}$. Thus it holds that $h(x)= \bold{v}_k(x) \chi_j^k(x)$. Using the fact that the partition of unity functions are all bounded by one, and the fact that we have by assumption $\bold{v}_k(x) < 1 $ for $x \in \mathcal{S}_{k, j}^{\Int} \subset \mathcal{S}_k^{\Int}$, we conclude that $h(x) < 1$.
			
			\item $x \notin \Gamma_j^{\alpha}$, i.e., $x$ is a boundary point of the closed set $\overline{\Gamma_j^{\alpha^{j}}}$. Now, either $x \in {\Gamma_j^{\Ext}}$ or there exists some neighbouring index $\ell \in N_{jk}$ such that $x \in \overline{\Gamma_j^{\beta}}$ where $\beta \in Z_{j}$ satisfies $\beta_1= k$ and $\beta_2=\ell$.  Since $x \in \text{int}(\Gamma_j^{\Int})$, we have excluded the first case. The second case is covered below. 
		\end{itemize}
		
		\item Suppose $x \in \overline{\Gamma_{j}^{\alpha}}$ where $\alpha \in Z_{j}$ satisfies $\alpha_1 = k, \alpha_2=\ell$ for some $k \in N_j$ and some $\ell \in N_{jk}$. In other words, we consider the case where $x$ belongs to the part of the boundary $\partial \Omega_j$ that is a triple intersection or higher.  It now holds that
		\begin{align}\label{eq:extension1}
		h(x)=\bold{v}_{k}\chi_j^{k}(x) + \bold{v}_{\ell}(x)\chi_j^{\ell}(x) + \sum_{i=3}^{M_{\alpha}} \bold{v}_{\alpha_i}(x)\chi_j^{\alpha_i}(x).
		\end{align}
		
		Notice now that $\text{for all } x \in \overline{\Gamma_{j}^{\alpha}}$ the partition of unity functions satisfy $\sum_{i=1}^{M_{\alpha}} \chi_j^{\alpha_i}(x)=1 $, and we have by the assumptions of Lemma \ref{lem:2} that $\bold{v}_{\alpha_i}(x) \leq 1$ for all $i=1, \ldots, M_{\alpha}$. In fact, even more is true. Since $x \in \overline{\Gamma_{j}^{\alpha^{j}}} \cap \text{int}\big(\Gamma_{j}^{{\Int}}\big)$ it is not difficult to see that either $x \in \Omega_k$ or $x \in \Omega_{\ell}$, i.e., $x$ belongs to the interior of one of the two neighbouring subdomains $\Omega_k$ and $\Omega_{\ell}$. This being the case, we obtain from Lemma \ref{lem:1} that either $\bold{v}_k(x) < 1$ or $\bold{v}_{\ell}(x) < 1$. Equation \eqref{eq:extension1} then implies that $h(x) < 1$. This completes the claim.
	\end{enumerate}
	
	We therefore conclude that $h(x) < 1$ for all $x \in \text{int}(\Gamma_j^{\Int})$ and therefore Property (iii) of the function $h$ holds also in the current more general setting. The remainder of the proof is identical.

	It can now be readily checked that the proofs of the remaining lemma and theorems in Section \ref{sec:conv} remain essentially unchanged. We conclude this sub-section by remarking that the reader can now understand why we chose to begin our analysis with certain simplifying assumptions: unfortunately, in a completely general setting, the notation turns increasingly complex and it becomes easy to lose focus on the main ideas of our analysis.

	\subsection{Extensions to three dimensions}\label{sec:Extensions3}
	
	We have assumed throughout this article that the domain $\Omega$ is a subset of $\mathbb{R}^2$ and is decomposed into subdomains $\{\Omega_i\}_{i=1}^N$ consisting of disks. It is pertinent to recall however, that the original ddCOSMO model involves solving the Laplace equation on a computational domain in $\mathbb{R}^3$, which is decomposed into subdomains $\{\Omega_i\}_{i=1}^N$ consisting of balls. We therefore briefly discuss the possibility of extending the preceding analysis to this three dimensional setting.
	
	Let us first assume that the domain $\Omega \subset \mathbb{R}^3$ can be decomposed as the union of intersecting balls $\Omega= \cup_{i=1}^N \Omega_i$. As in Section \ref{sec:Extensions2}, we assume that no two subdomains intersect at a single point. A careful study of Sections \ref{sec:2} and \ref{sec:Extensions2} now reveals that this three-dimensional geometric setting does not require any change in notation. In fact, the only lemma that uses the fact that each $\Omega_i, ~ i=1, \ldots, N \subset \mathbb{R}^2$ is Lemma \ref{lem:2}. 
	
	To be more concrete, assume the setting of Lemma \ref{lem:2}. The final step of the proof consists of showing that
	\begin{align*}
	\lim_{\substack{x \in \mathcal{S}_{j, k}\\ x \to \partial \Omega_j}} \bold{w}_j(x) < 1.
	\end{align*}
	
	In order to prove this result, we use the Schwarz lemma. 
	This lemma is referenced by name in \cite[Pages 632-635]{Krylov}, and a detailed proof is provided in the case of two dimensions. Another proof of this lemma, which uses complex analysis and is therefore also valid only in two dimensions is given in \cite{CiaramellaGander2}. Additionally, the Schwarz lemma is also referenced- this time in higher dimensions- by P.L. Lions in his classical paper on domain decomposition in \cite{Lions2}[Section 3]. Lions claims that ``... this follows easily from potential theory'' but omits providing a proof.  Unfortunately, apart from Lion's single comment, we have been unable to find any proof for the Schwarz lemma in three dimensions. Moreover, the fact that the proof in \cite{CiaramellaGander2} uses tools from complex analysis prevents a straightforward extension to three dimensions.
	
	We therefore emphasise that the only bottleneck in extending the analysis of Sections \ref{sec:2} and \ref{sec:Extensions2} to the case of a domain $\Omega \subset \mathbb{R}^3$ composed of intersecting balls is the lack of a proof of the Schwarz lemma in three dimensions.

	\subsection{Extensions to other types of subdomains}
	The analysis we have presented thus far assumes that the domain $\Omega$ consists either of a union of intersecting disks in $\mathbb{R}^2$ or a union of intersecting balls in $\mathbb{R}^3$ (see Section \ref{sec:Extensions3}). In this subsection, we briefly discuss the possibility of extending our analysis to domains composed of more general types of subdomains. 
	
	Indeed, the results presented in this manuscript will hold for a different choice of subdomains if
	\begin{itemize}
		\item[(B1)] For this new choice of subdomains, it is possible to introduce definitions and notations analogous to the ones introduced in Section \ref{sec:2} such that the operator formulation of the Schwarz method as stated in Section \ref{sec:2.2} is well-posed.
		
		\item[(B2)] For this new choice of subdomains, the \emph{technical results} presented in Section \ref{sec:conv} still hold.
	\end{itemize}
	
	Clearly, the well-posedness of the operator formulation of the Schwarz method as given in Section \ref{sec:2.2} requires that the boundary value problem \eqref{eq:3} be well-posed on each subdomain $\Omega_j, ~j=1, \ldots, N$. This condition is satisfied if the subdomains $\{\Omega_j\}_{j=1}^N$ are, for instance, assumed to be \emph{Lipschitz}. Moreover, it is not difficult to see that the definitions and notations introduced in Section \ref{sec:2} involving, for instance, interior and exterior boundary sets, skeletons and partition of unity functions, can easily be extended to Lipschitz subdomains $\{\Omega_j\}_{j=1}^N$. Consequently, condition (B1) holds for the case of Lipschitz subdomains.
	
	Unfortunately, the \emph{analysis} we have presented in Section \ref{sec:conv} does not hold for general Lipschitz subdomains. In order to illustrate this point, let us consider the geometry displayed in Figure \ref{fig:forHassan2}.
	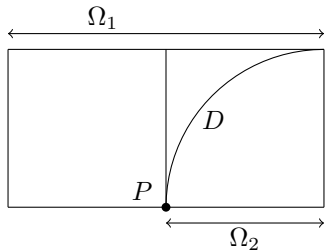
\begin{figure}[h]
		\centering
		\begin{tikzpicture}[scale=2.1]
		\draw (0.5,0.5)-- (0.5,1.5);
		\draw (0.5,1.5)-- (2.5,1.5);
		\draw (2.5,1.5)-- (2.5,0.5);
		\draw (2.5,0.5)-- (0.5,0.5);
		\draw (1.5,0.5)-- (1.5,1.5);
		\draw [domain=0:90] plot ({2.5-cos(\x)}, {0.5+sin(\x)});
		\draw[<->] (0.5,1.6)-- (2.5,1.6);
		\draw[<->] (1.5,0.4)-- (2.5,0.4);
		\draw (2.0,0.3) node {$\Omega_2$};
		\draw (1.1,1.7) node {$\Omega_1$};
		\fill [black] (1.5,0.5) circle (0.8pt);
		\draw (1.35,0.6) node {$P$};
		\draw (1.8,1.05) node {$D$};
		\end{tikzpicture}
		\caption{A domain $\Omega$ consisting of the union of two subdomains $\Omega_1$ (left) and $\Omega_2$ (right). Notice that although $\Omega_1$ and $\Omega_2$ are both Lipschitz, the overlap $\Omega_1 \cap \Omega_2$ is non-Lipschitz. Lemma \ref{lem:2} fails for this choice of subdomains.}
		\label{fig:forHassan2}
	\end{figure}

	We define the sets $\Omega_1 \subset \mathbb{R}^2$ and $\Omega_2\subset \mathbb{R}^2$ as
	\begin{align*}
	\Omega_1&:= (-1,0] \times (0,1) \cup \left\{ (x, y) \in \mathbb{R}^2 \colon x \in (0, 1), y\in (\sqrt{x}, 1)\right\},\\
	\Omega_2&:= (0, 1) \times (0, 1),
	\end{align*}
	and we define the domain $\Omega:=\Omega_1 \cup \Omega_2$. Additionally, we denote by \[D= \left\{ (x, y) \in \mathbb{R}^2 \colon x \in (0, 1), y=\sqrt{x}\right\},\] the curve that constitutes the right-side boundary of the domain $\Omega_1$.
	Let us now consider Lemma \ref{lem:2} for this choice of geometry and domain decomposition. Essentially, Lemma \ref{lem:2} claims the following: Let the function $h \colon \partial \Omega_2 \rightarrow \mathbb{R}$ be defined as
	\begin{align*}
	h(x)= \begin{cases}
	1 & \quad \text{if } x \in \text{int}\big(\Gamma_2^{\rm int}\big),\\
	0 & \quad \text{otherwise}.
	\end{cases}
	\end{align*}
	Let $w_2 \in L^2(\Omega_2)$ be the harmonic extension in $\Omega_1$ of $h$ and let the function $w_1 \in L^2(\Omega_1)$ be the solution to the BVP
	\begin{align*}
	-\Delta w_1 &= 0 \hspace{1cm}\text{in } \Omega_1,\\
	w_1 &= w_2\hspace{0.75cm} \text{on } \text{int } \Gamma_1^{\rm int},\\
	w_1 &= 0\hspace{1cm}\text{on } \partial \Omega_1 \setminus \text{int } \Gamma_1^{\rm int}.
	\end{align*}
	Then according to Lemma \ref{lem:2} we should have that
	\begin{align}\label{eq:Hassan4}
	\text{ess}\sup_{\Omega_1} w_1 < 1.
	\end{align}
	Consider now the proof of Lemma \ref{lem:2} and let $P\in \partial \Omega_1$ be the point $(0, 0)$, i.e., the lower endpoint of the curve $D$. The first case in Step (3) of the proof of Lemma \ref{lem:2} shows that 
	\[\lim_{\substack{x \in D \\ x \to P}} w_1(x)< 1.\]
	This bound on the limit is established using the Schwarz lemma (see, e.g., \cite[Pages 632-635]{Krylov}), which states that \[\lim_{\substack{x \in D \\ x \to P}} w_1(x)= \alpha,\] where $\alpha \in [0, 1]$ is a constant that depends on the angle at which the curve $D$ intersects the boundary $\Gamma_2^{\rm int}$ at the point $P$. 
	We now observe that the definition of the curve $D$ implies that it intersects $\Gamma_2^{\rm int}$ with angle \emph{zero} at the point $P$ which yields $\alpha=1$. Thus, the bound \eqref{eq:Hassan4} \underline{fails}, i.e., Lemma \ref{lem:2} no longer holds. We emphasise that this situation does not arise in the case of a domain $\Omega\subset \mathbb{R}^2$ that is composed of the union of intersecting disks since the boundaries of disks do not intersect at zero angles. 
	
	Note that the geometry displayed in Figure \ref{fig:forHassan2} is known in the domain decomposition literature as an example of a domain $\Omega=\Omega_1\cup \Omega_2$ wherein the standard proof of geometric convergence of the PSM via a contraction argument fails (see, e.g., \cite{Gabriele1}). Therefore, it is not surprising that our analysis also does not hold in this case. We remark that convergence of the PSM in this case can still be proven but requires different tools (see, e.g., \cite[Section 4]{Lions2}).
	
	We conclude that the analysis we have presented in this manuscript does not hold for general non-convex Lipschitz subdomains.
	
	\section{Numerical experiments}\label{sec:num}~\\
	The goal of this section is two-fold: First, we provide numerical evidence supporting our main convergence result Theorem \ref{thm:1}, and its analogue for the case of quadruple intersecting disks. Second, we show the effect of adopting a different choice of the partition of unity functions on the convergence rates.
	
	\begin{figure}[h]
		\begin{subfigure}{.32\textwidth}
			\centering
			\includegraphics[width=1.1\linewidth, trim={4cm 8cm 4cm 8cm}, clip=true]{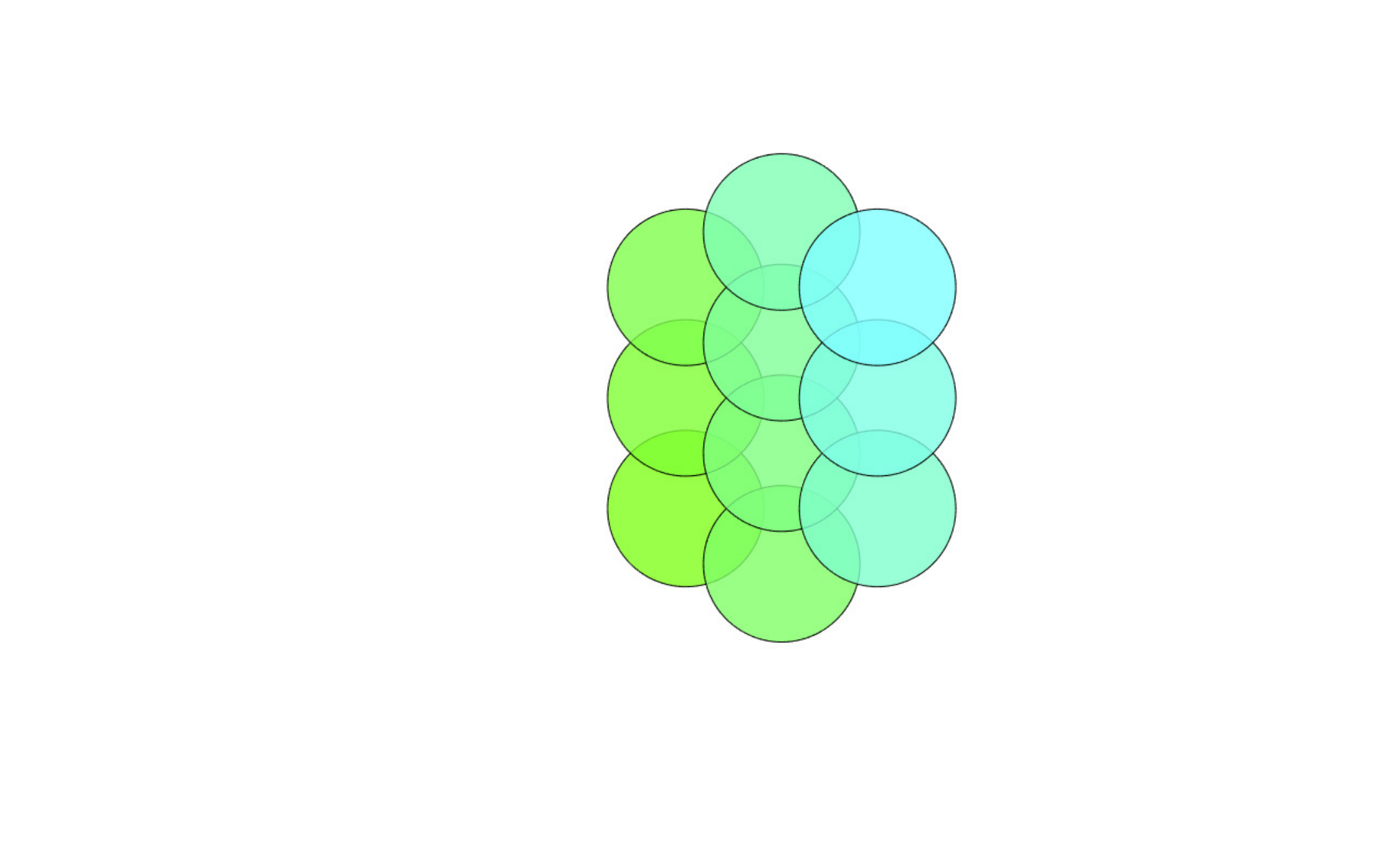}
			\vspace{0.2cm}
			\caption{$N_{\max}=2$ Layers.}
		\end{subfigure}%
		\begin{subfigure}{.32\textwidth}
			\centering
			\includegraphics[width=\linewidth, trim={3.5cm 7cm 3.5cm 7cm}, clip=true]{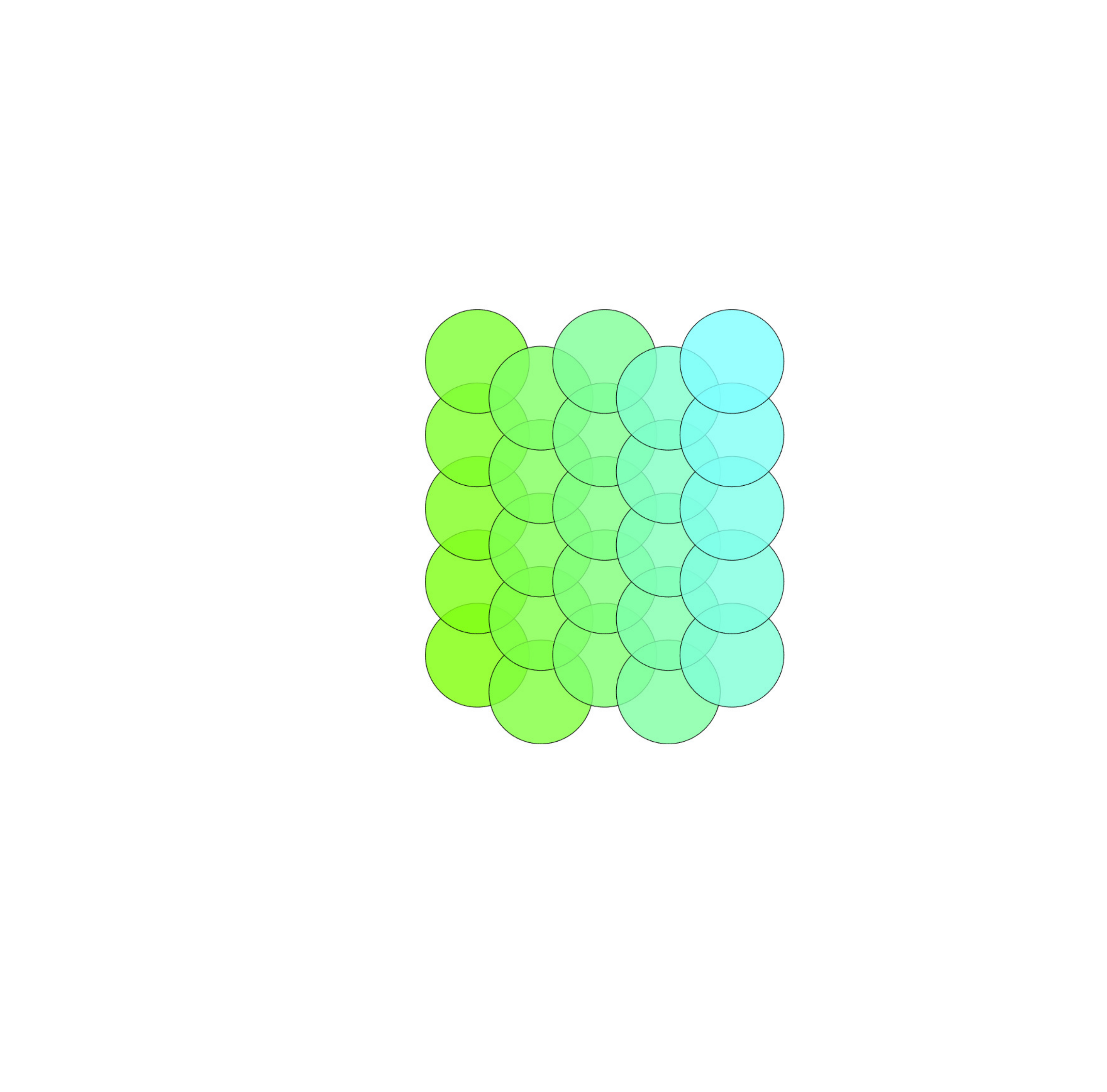}\vspace{0.5cm}
			\caption{$N_{\max}=3$ Layers.}
		\end{subfigure}
		\begin{subfigure}{.32\textwidth}
			\centering
			\includegraphics[width=1\linewidth, trim={2cm 4cm 2cm 4cm}, clip=true]{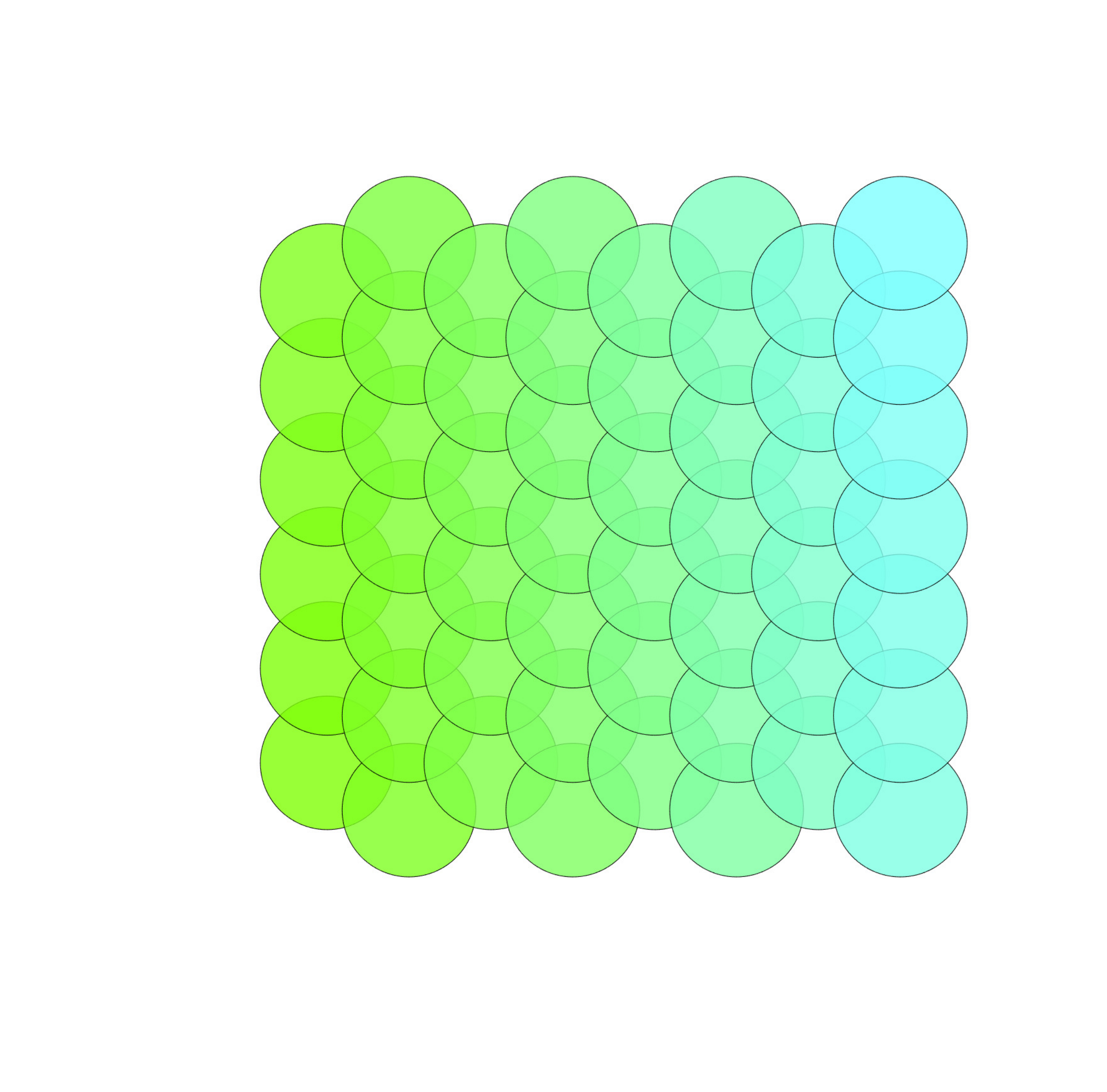}
			\caption{$N_{\max}=4$ Layers.}
		\end{subfigure}
		\caption{The different geometries used for the first set of numerical computations. The colour scheme is purely cosmetic. Note that both the radii and the distance between the centres of the subdomains in the three geometries are the same.}
		\label{fig:Numerics1a}
	\end{figure}
	
	\begin{figure}[h!]
		\centering
		\begin{subfigure}{0.45\textwidth}
			\centering
			\includegraphics[width=0.95\textwidth]{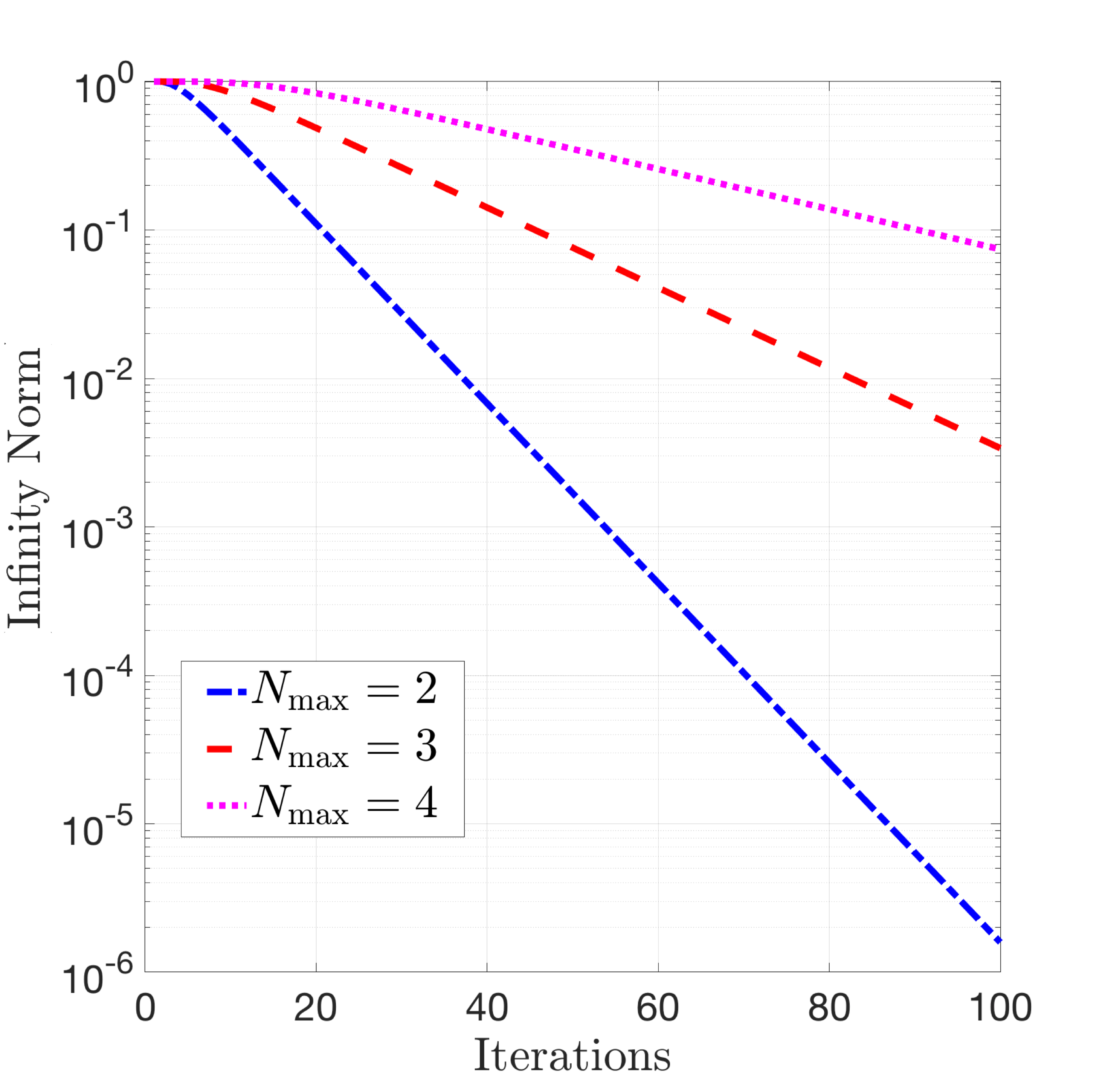} 
			\caption{Log-Lin plot of the Infinity norm of the Schwarz iterates for 100 iterations.}			
			\label{fig:Numerics1}	
		\end{subfigure} %
		\begin{subfigure}{0.45\textwidth}
			\centering
			\includegraphics[width=0.95\textwidth]{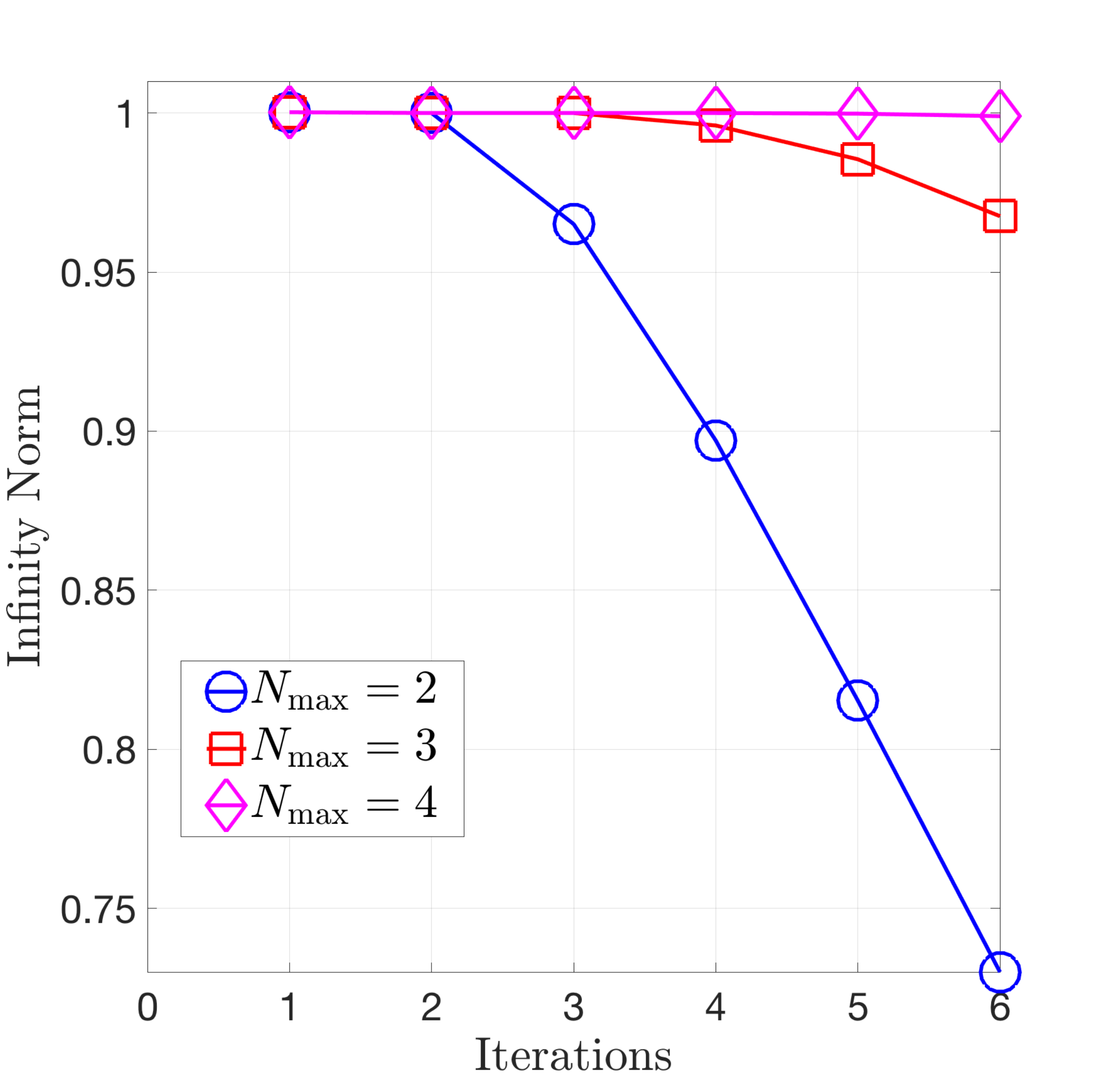} 
			\caption{Infinity norm of the Schwarz iterates for the first 6 iterations.}
			\label{fig:Numerics2}
		\end{subfigure}
		\caption{Numerical computations involving a computational domain with triple intersection subdomains.}
	\end{figure}
	
	Figures \ref{fig:Numerics1} and \ref{fig:Numerics2} display the infinity norm of the Schwarz iterates generated through the Error Equation \eqref{eq:5} using the initialisation $\bold{e}^0 \equiv 1$ on $\Pi_{j=1}^{j=N} \mathcal{S}_j$ for geometries with a different number of maximum layers. The different geometries are displayed in Figure \ref{fig:Numerics1a}. We emphasise that continuous partition of unity functions were chosen for this and all subsequent computations.
	
	Although it is slightly difficult to see this at a glance, the numerical results displayed in Figure \ref{fig:Numerics2} follow the theoretical results exactly. Indeed, the error function always displays a first contraction at iteration number $N_{\max}+1$. Furthermore, it is readily seen that the convergence rate of the error also depends on the total number of layers in the domain and degrades as $N_{\max}$ increases as shown in the log-lin plot displayed in Figure \ref{fig:Numerics1}. Indeed, we observe that the slope of the plots and consequently the asymptotic contraction factor degrades as $N_{\max}$ increases.

	\begin{figure}[h]
		\centering
		\begin{subfigure}{0.45\textwidth}
			\centering
			\includegraphics[width=0.95\textwidth]{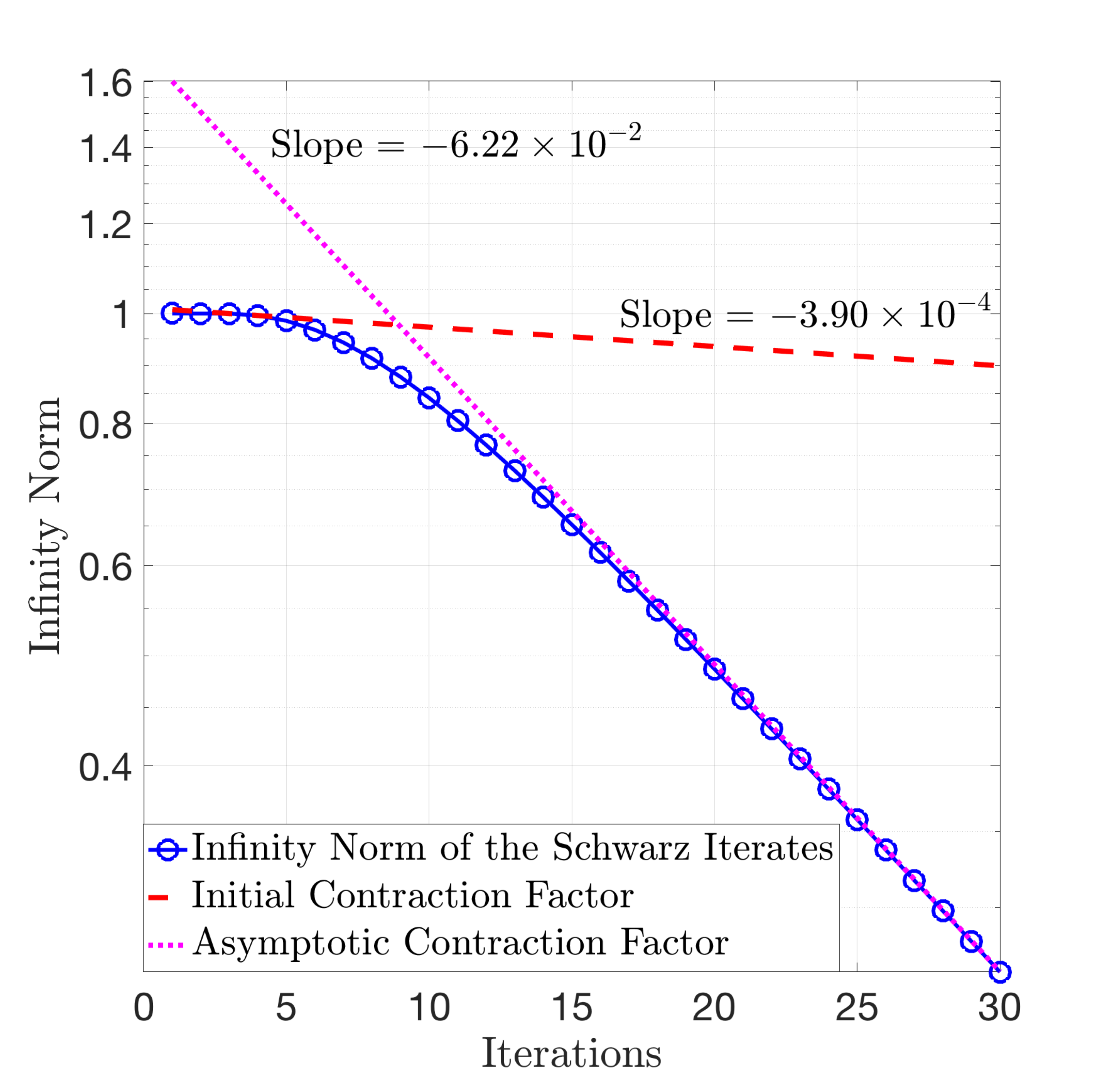} 
			\caption{Contraction Factors for $N_{\max}=3$ layers.}
			\label{fig:Numerics3}
		\end{subfigure}\hfill
		\begin{subfigure}{0.45\textwidth}
			\centering
			\includegraphics[width=0.95\textwidth]{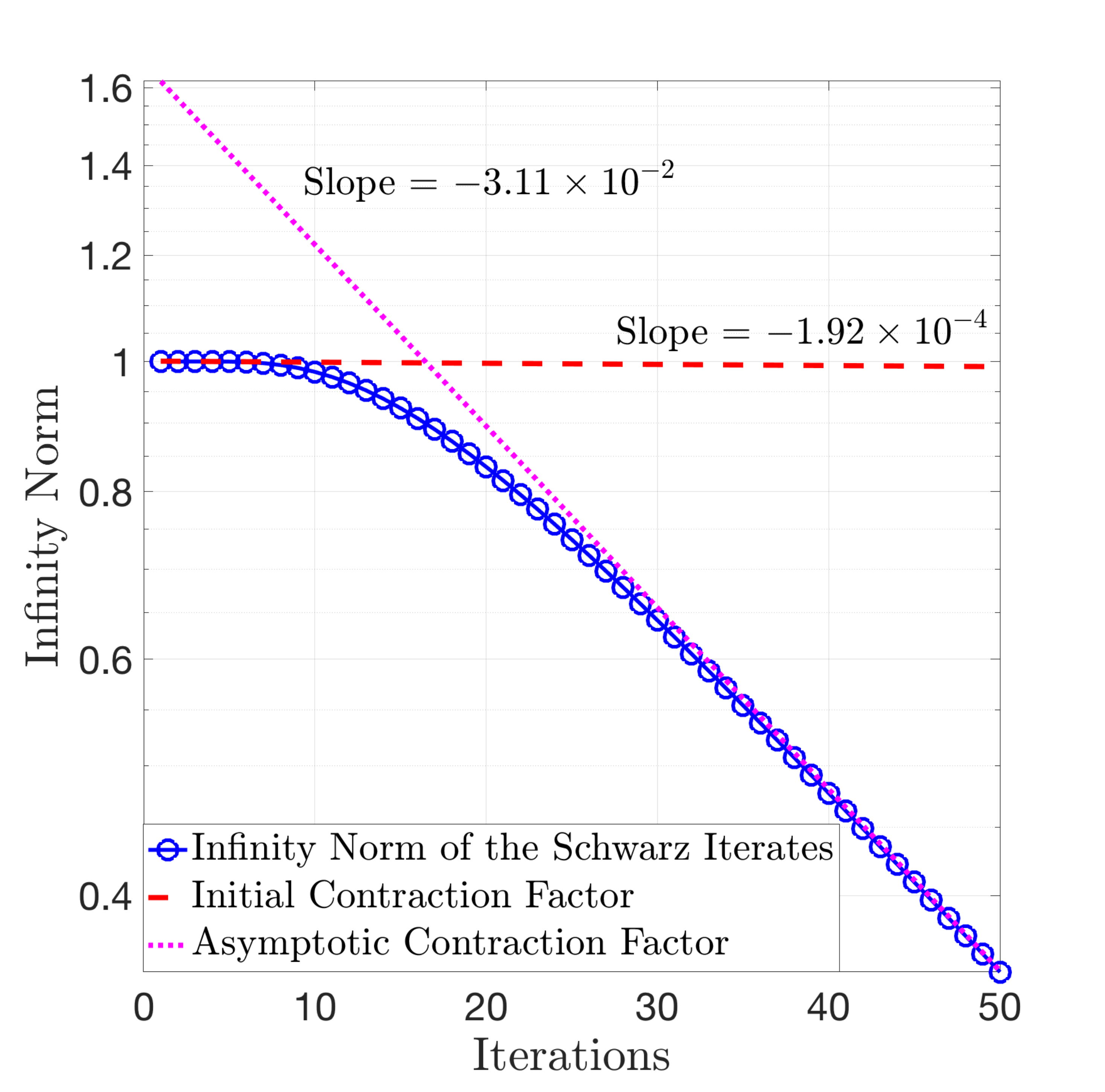} 
			\caption{Contraction Factors for $N_{\max}=4$ layers.}
			\label{fig:Numerics4}
		\end{subfigure}
		\caption{Numerical computations involving a computational domain with triple intersection subdomains.}
	\end{figure}

	One extremely complicated question that we did not address in the theoretical analysis of Section \ref{sec:2} is how to obtain a quantitive estimate of the contraction factor. Typically, one obtains an upper bound for the initial contraction factor, i.e., the initial decrease in the infinity norm of the Schwarz iterates (see, e.g., \cite{CiaramellaGander}, \cite{CiaramellaGander2} and \cite{CiaramellaGander3}), which then improves considerably in the asymptotic limit. Figures \ref{fig:Numerics3} and \ref{fig:Numerics4} display the exact infinity norm of the Schwarz iterates together with the (numerically obtained) initial and asymptotic contraction factor for geometries with $N_{\max}=3$ and $N_{\max}=4$ layers. We observe immediately that the first contraction in both cases is nearly two orders of magnitude smaller than the asymptotic contraction factor. The numerics therefore suggest that even if we were able to obtain an upper bound for the initial contraction factor, that is, an estimate of $\Vert T^{N_{\max}+1} \Vert_{\rm{OP}, \infty}$, it would be extremely conservative and not useful from a practical point of view. 

	In order to support our claim regarding the extension of Theorem \ref{thm:1} to more complicated geometries, we next repeat the above numerical computations for a computational domain consisting of quadruple intersecting subdomains. The different geometries are displayed in Figure \ref{fig:Numerics2a}.
	
	\begin{figure}[h]
		\begin{subfigure}{.32\textwidth}
			\centering
			\includegraphics[width=1\linewidth, trim={4cm 8cm 4cm 8cm}, clip=true]{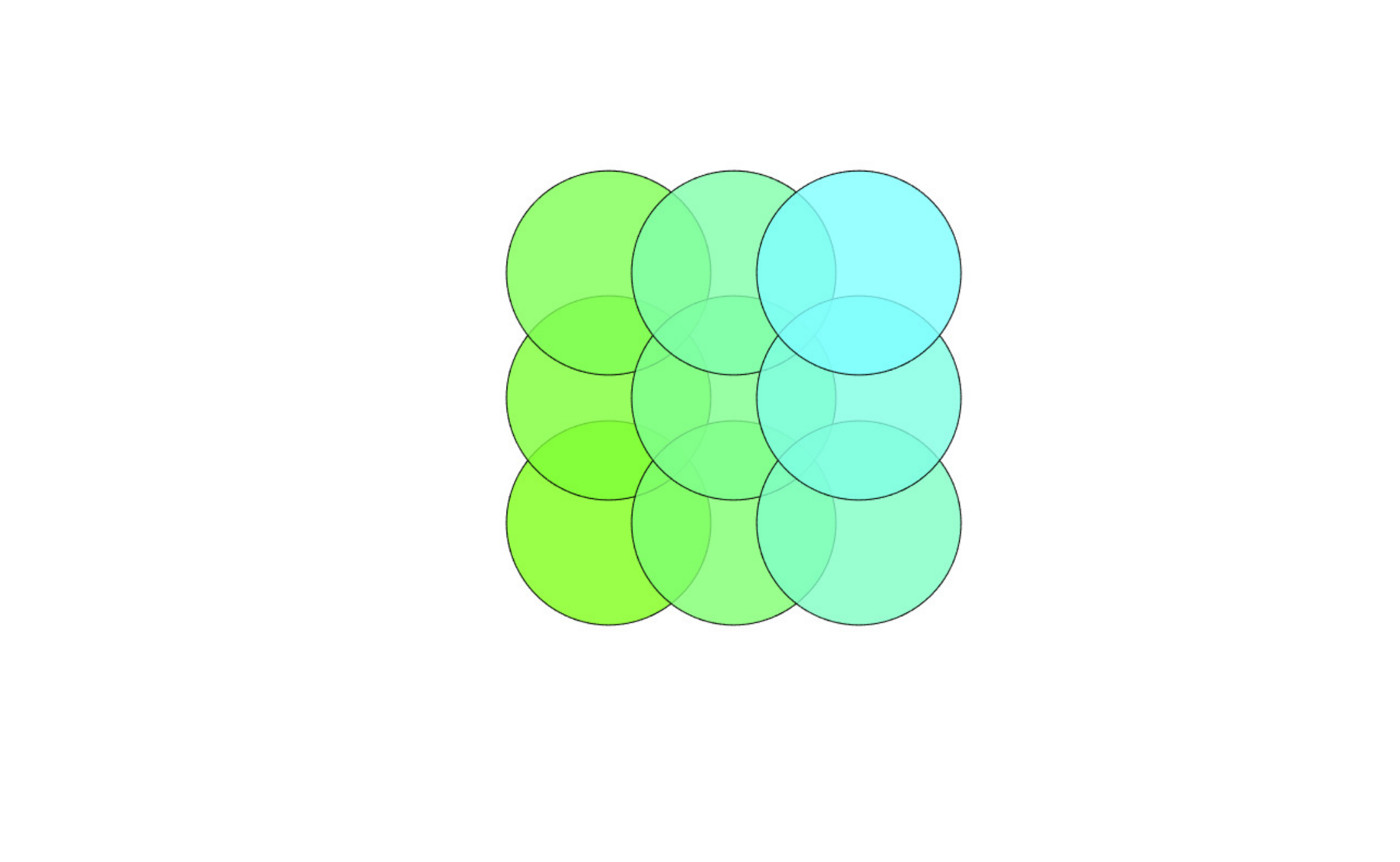}\vspace{1.02cm}
			\caption{$N_{\max}=2$ Layers.}
		\end{subfigure}%
		\begin{subfigure}{.32\textwidth}
			\centering
			\includegraphics[width=1\linewidth, trim={3.5cm 7cm 3.5cm 7cm}, clip=true]{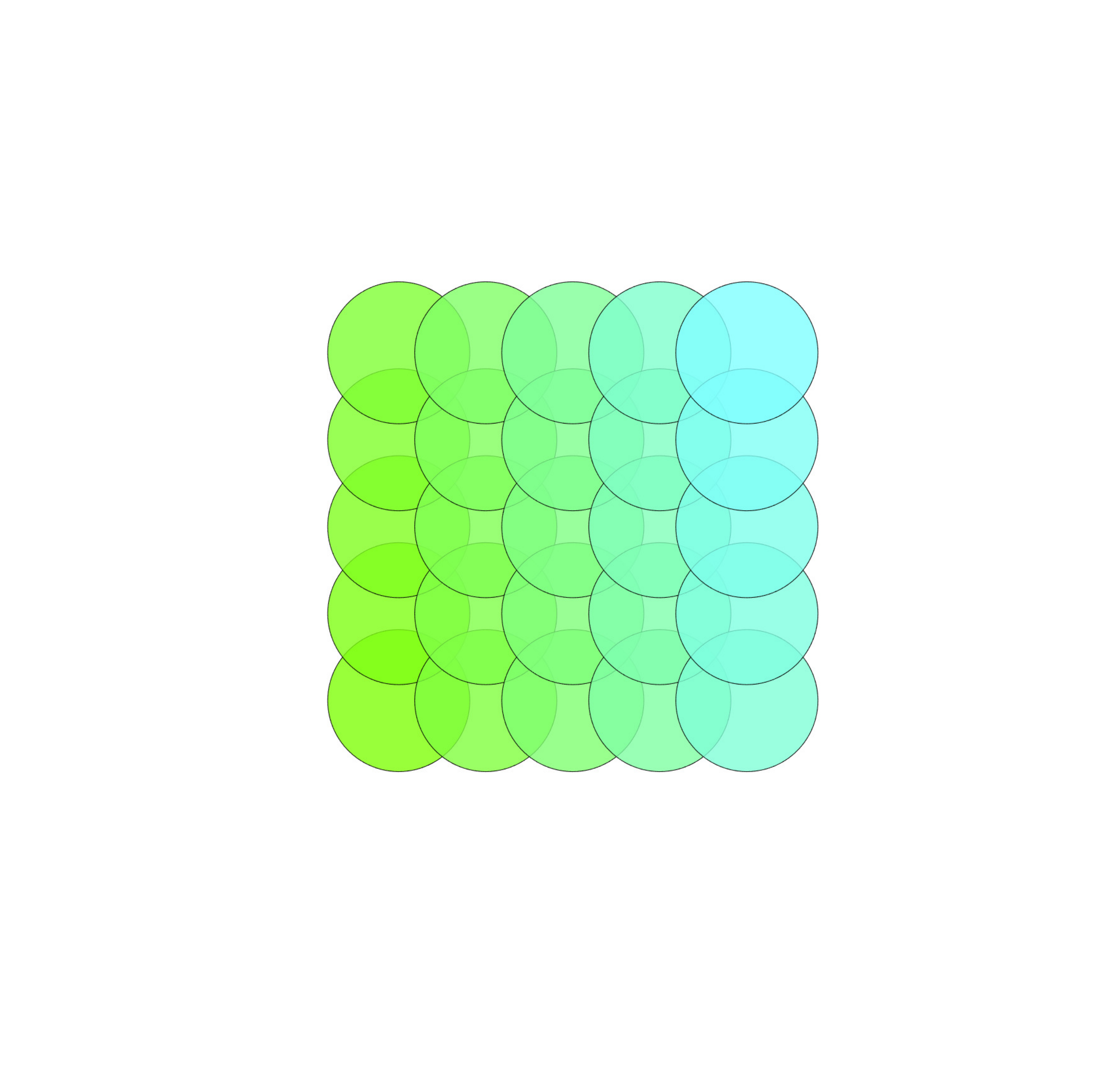}\vspace{0.75cm}
			\caption{$N_{\max}=3$ Layers.}
		\end{subfigure}
		\begin{subfigure}{.32\textwidth}
			\centering
			\includegraphics[width=1\linewidth, trim={2cm 4cm 2cm 3cm}, clip=true]{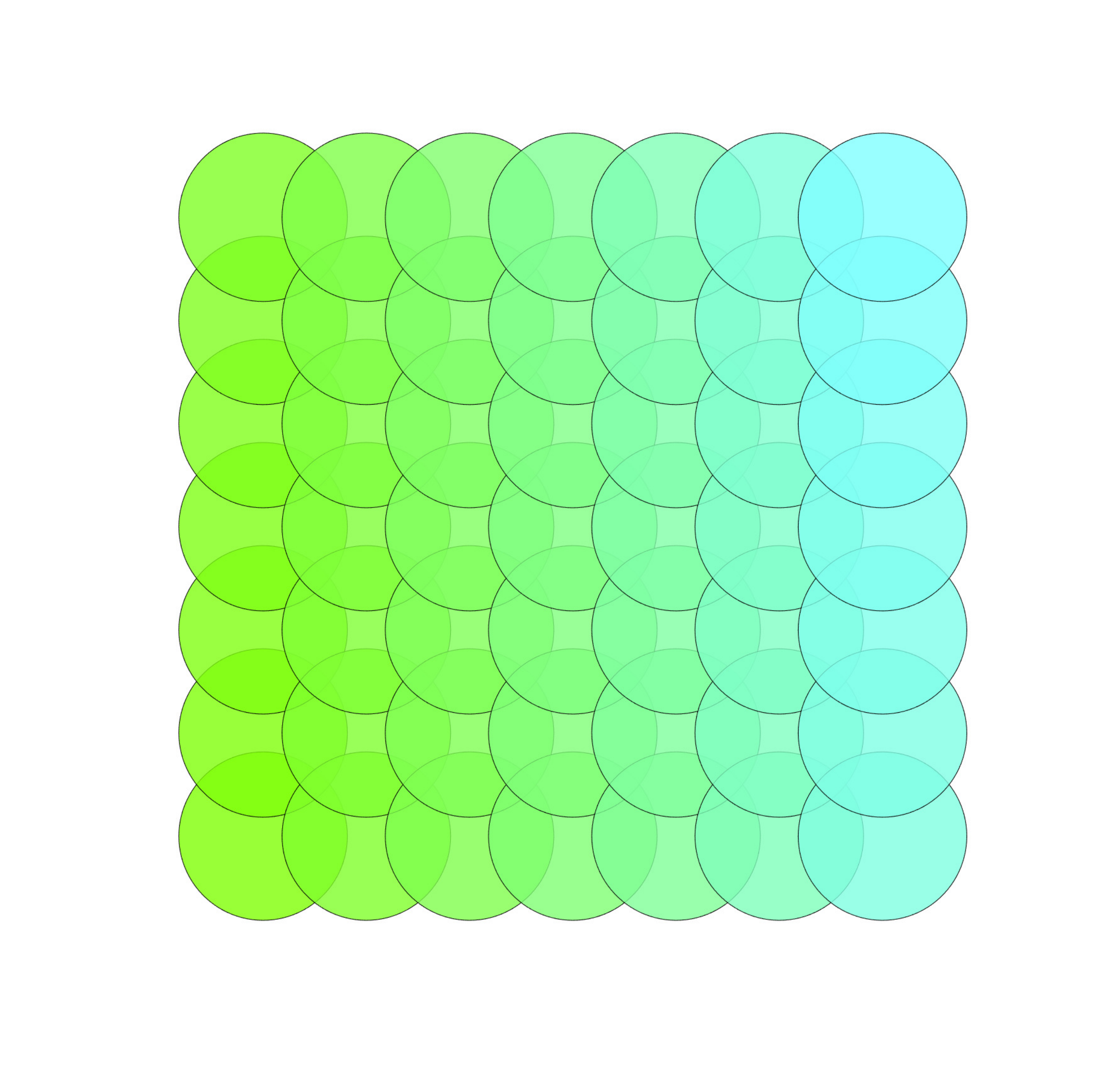}
			\caption{$N_{\max}=4$ Layers.}
		\end{subfigure}
		\caption{The different geometries used for the second set of numerical computations. The colour scheme is purely cosmetic. Note that both the radii and the distance between the centres of the subdomains in the three geometries is the same.}
		\label{fig:Numerics2a}
	\end{figure}
	
	Figures \ref{fig:Numerics22} displays the infinity norms of the Schwarz iterates generated through the Error Equation \eqref{eq:5} using the initialisation $\bold{e}^0 \equiv 1$ on $\Pi_{j=1}^{j=N} \mathcal{S}_j$ for quadruple intersecting geometries with a different number of maximum layers.  Once again, although it might be difficult to tell at a glance, the numerical results follows the theoretical results exactly and we see a first contraction of the infinity norm at iteration number $N_{\max}+1$.

	\begin{figure}[h]
		\centering
		\begin{subfigure}{0.45\textwidth}
			\centering
			\includegraphics[width=0.95\textwidth]{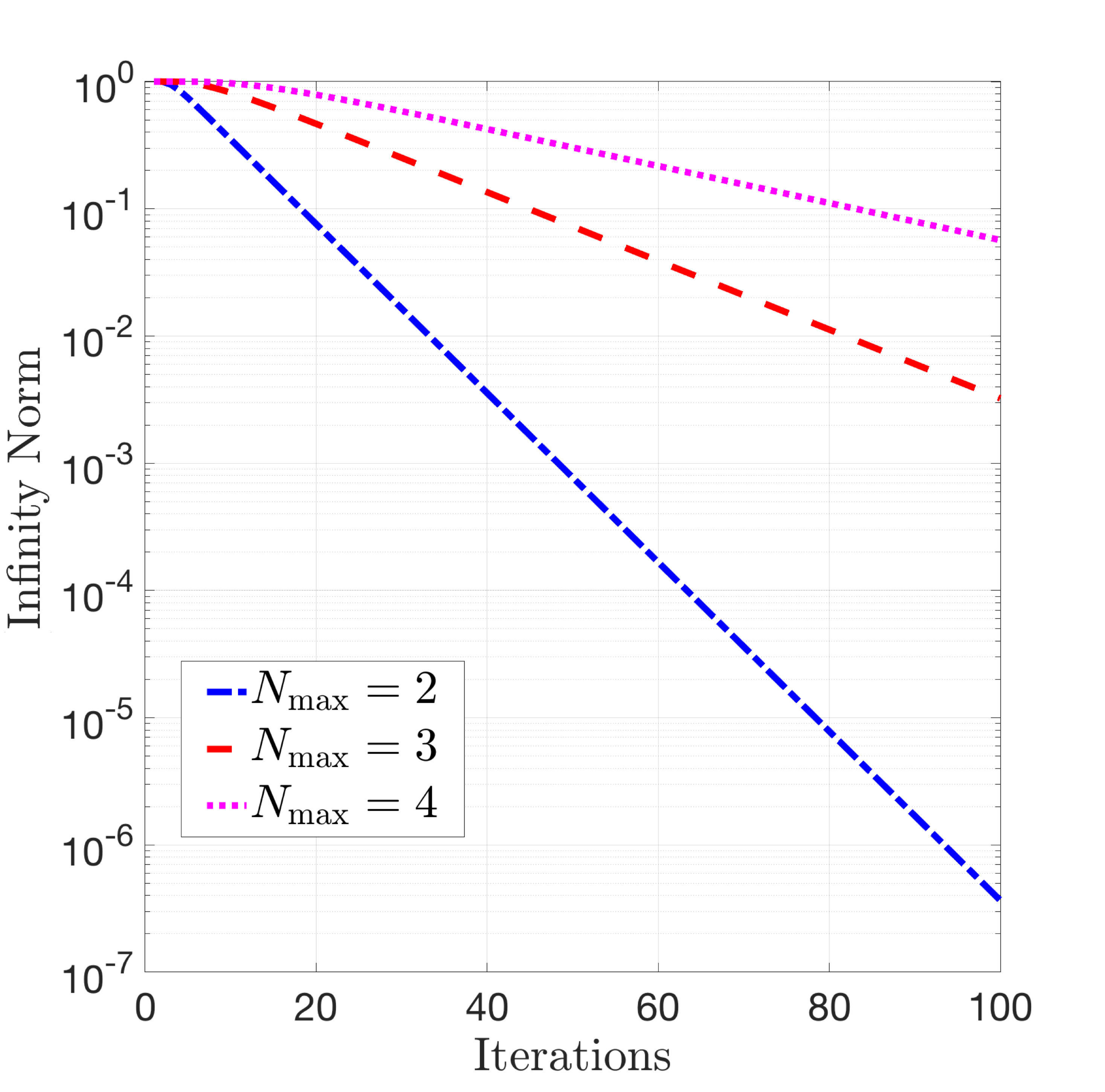} 
			\caption{Log-Lin plot of the Infinity norm of the Schwarz iterates for 100 iterations.}			
			\label{fig:Numerics21}	
		\end{subfigure} %
		\begin{subfigure}{0.45\textwidth}
			\centering
			\includegraphics[width=0.95\textwidth]{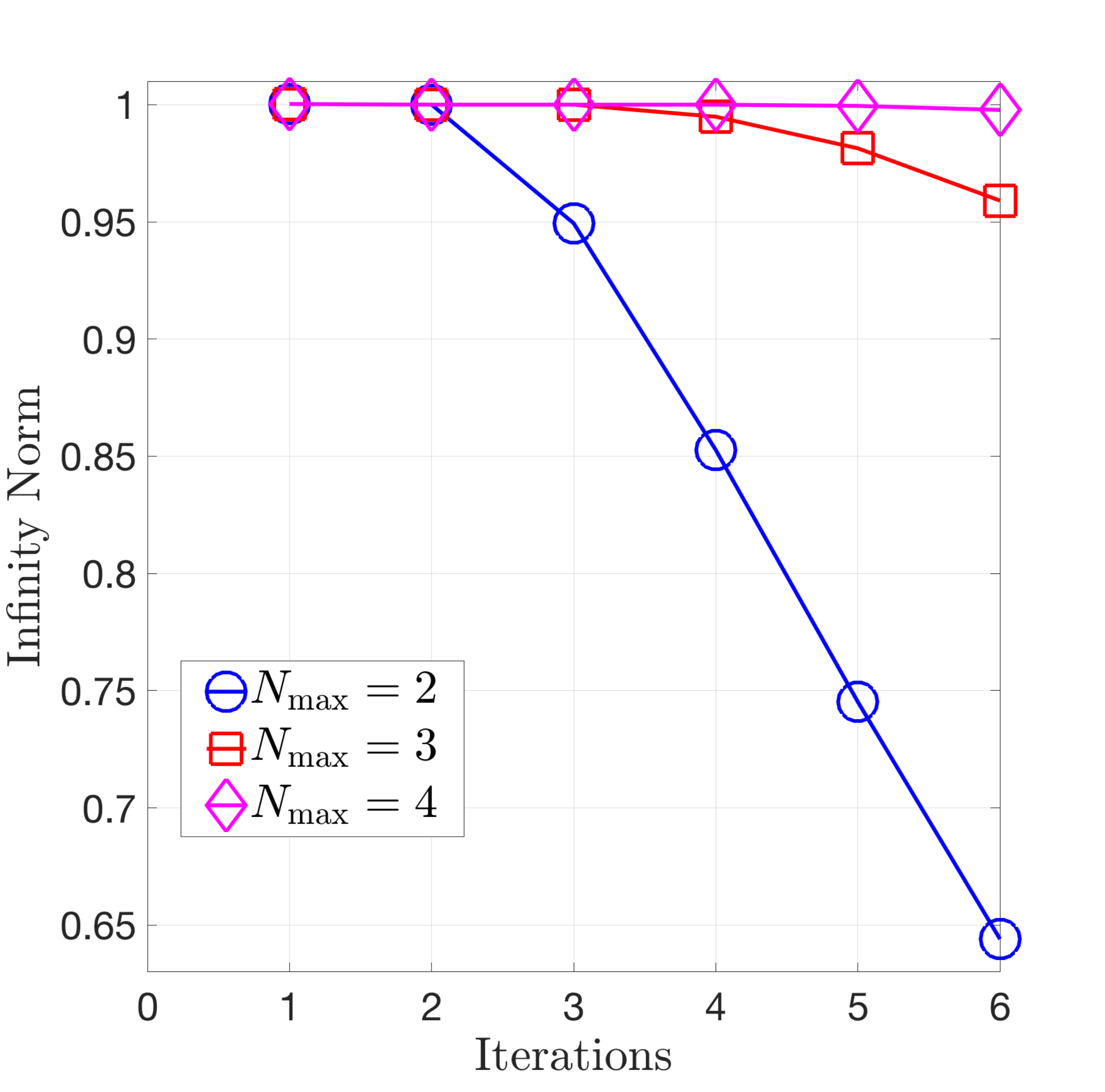} 
			\caption{Infinity norm of the Schwarz iterates for the first 6 iterations.}
			\label{fig:Numerics22}
		\end{subfigure}
		\caption{Numerical computations involving a computational domain with quadruple intersection subdomains.}
	\end{figure}

	\begin{figure}[h]
		\centering
		\begin{subfigure}{0.45\textwidth}
			\centering
			\includegraphics[width=0.95\textwidth]{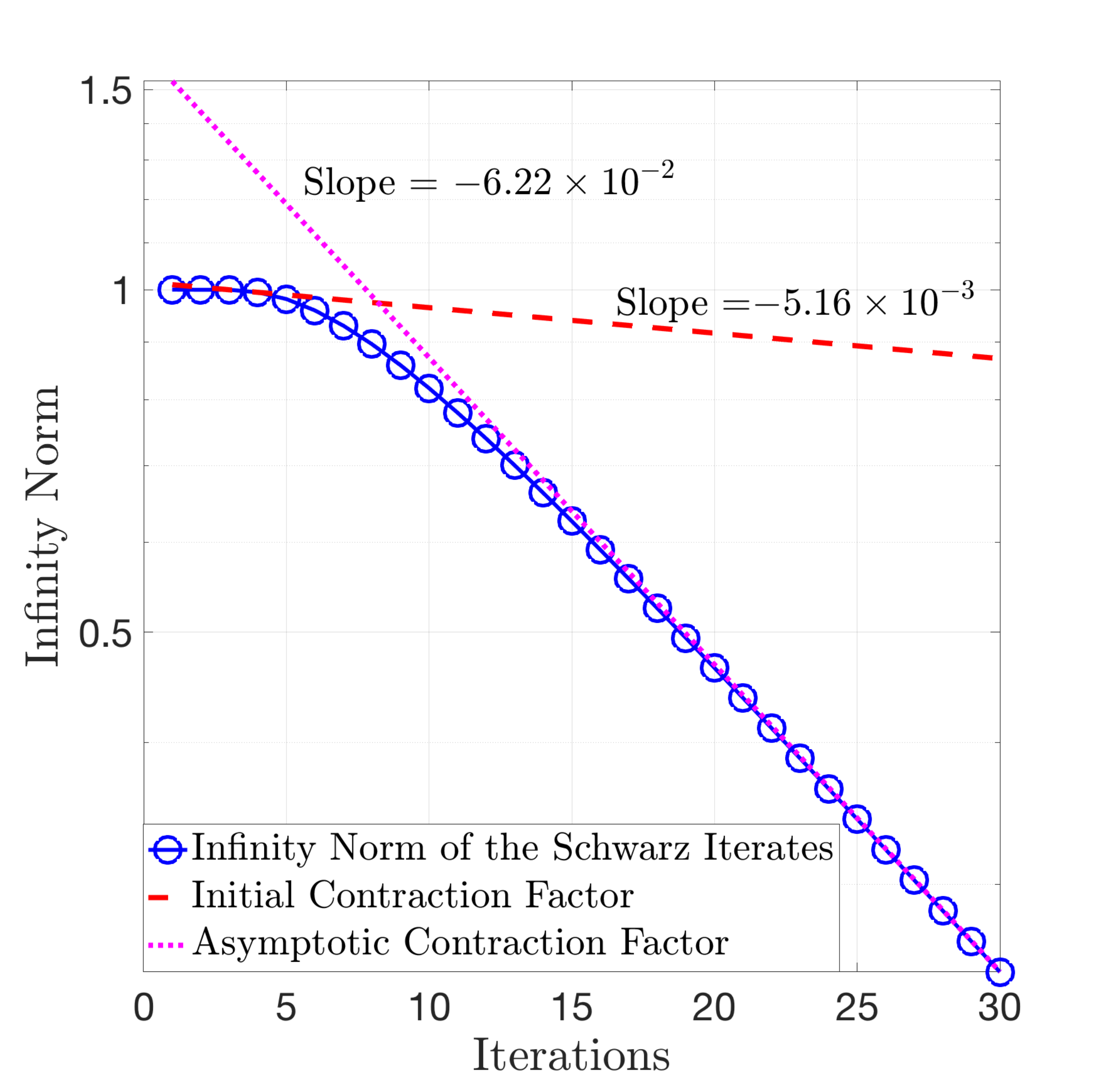} 
			\caption{Contraction Factors for $N_{\max}=3$ layers.}
			\label{fig:Numerics23}
		\end{subfigure}\hfill
		\begin{subfigure}{0.45\textwidth}
			\centering
			\includegraphics[width=0.95\textwidth]{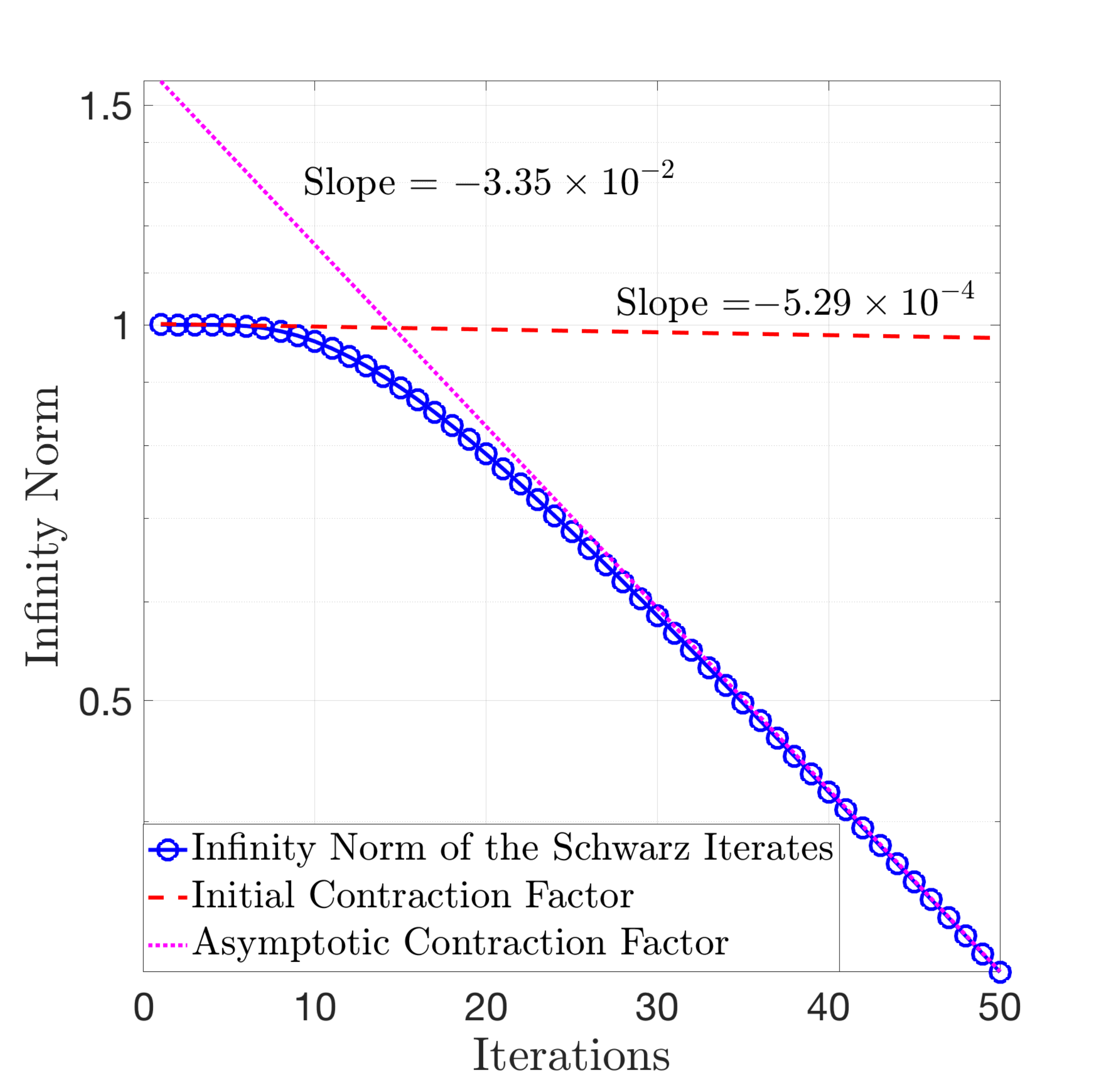} 
			\caption{Contraction Factors for $N_{\max}=4$ layers.}
			\label{fig:Numerics24}
		\end{subfigure}
		\caption{Numerical computations involving a computational domain with quadruple intersection subdomains.}
	\end{figure}
	
	\begin{figure}[h!]
		\centering
		\begin{subfigure}{0.45\textwidth}
			\centering
			\includegraphics[width=0.95\textwidth]{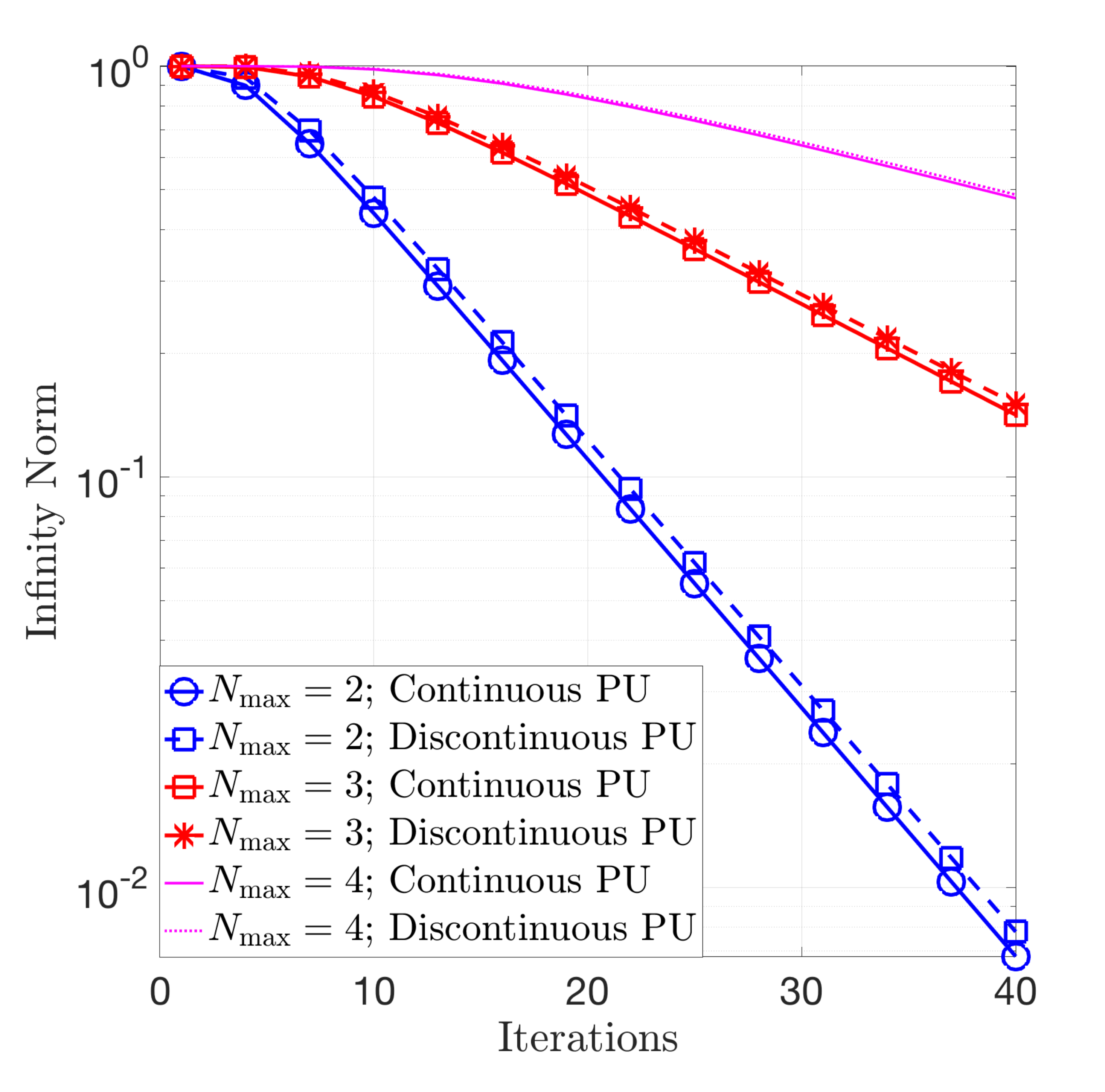} 
			\caption{Log-Lin plot of the Infinity norm of the Schwarz iterates for 40 iterations.}			
		\end{subfigure} %
		\begin{subfigure}{0.45\textwidth}
			\centering
			\includegraphics[width=0.95\textwidth]{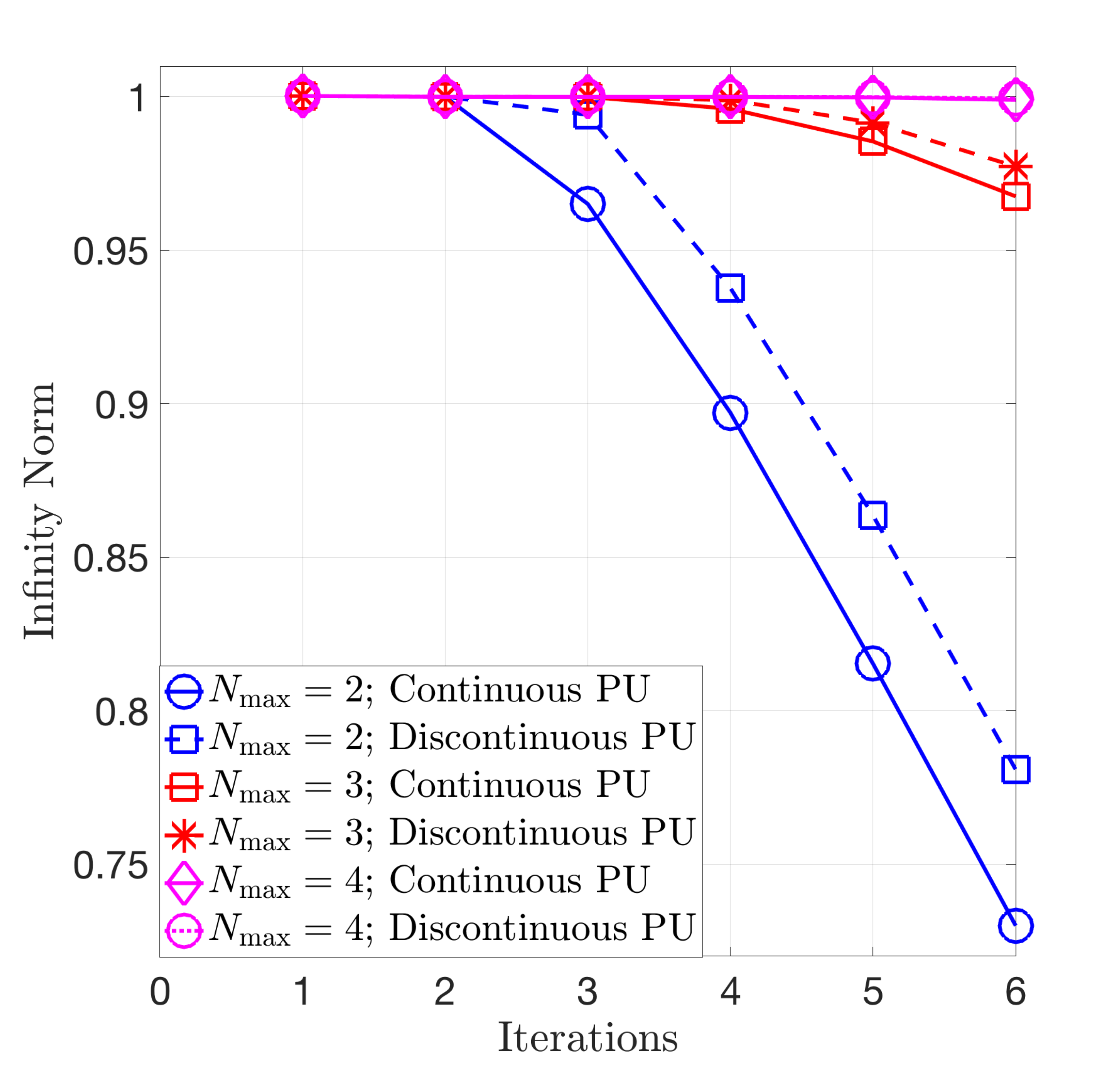} 
			\caption{Infinity norm of the Schwarz iterates for the first 6 iterations.}
		\end{subfigure}
		\caption{Numerical computations involving both continuous and discontinuous partition of unity functions. The computational domain consists of triple intersection subdomains as displayed in Figure \ref{fig:Numerics1a}.}
		\label{fig:Numerics33}
	\end{figure}
	
	Figure \ref{fig:Numerics21} displays the infinity norms of the Schwarz iterates as the number of iterations increases for the three geometries in a log-lin plot. Once again we observe that the asymptotic contraction factor degrades as $N_{\max}$ increases. In addition, we plot in Figures \ref{fig:Numerics23} and \ref{fig:Numerics24} the exact infinity norms of the Schwarz iterates together with the (numerically obtained) initial and asymptotic contraction factor for geometries with $N_{\max}=3$ and $N_{\max}=4$ layers. As before, the initial contraction factor can be seen to be at least an order of magnitude smaller than the asymptotic convergence factor.
	
	We conclude this section by returning to a question posed in Remark \ref{rem:0}: Does the choice of partition of unity functions affect the iterates of the Parallel Schwarz method and the asymptotic contraction factor? We decided to compute the infinity norm of the Schwarz iterates generated through the Error Equation \eqref{eq:5} using the initialisation $\bold{e}^0 \equiv 1$ on $\Pi_{j=1}^{j=N} \mathcal{S}_j$ for the geometries displayed in Figure \ref{fig:Numerics1a} using both the continuous partition of unity functions employed previously as well as discontinuous partition of unity functions as chosen in \cite{Gander:2008}[Section 2, Page 4]. The discontinuous partition of unity functions are defined as follows: Given three intersecting subdomains $\Omega_1, \Omega_2, \Omega_3$
	\begin{itemize}
		\item We set the partition of unity functions $\chi_1^2, \chi_1^3$ on $\partial \Omega_1$ as $\chi_1^2 = 1$ on $\overline{\Gamma_1^{2, 3}} \cup \overline{\Gamma_1^{2, 0}} $ and $\chi_1^2=0$ otherwise, and $\chi_1^3 = 1$ on $\overline{\Gamma_1^{3, 0}}$ and $\chi_1^3=0$ otherwise.
		\item We set the partition of unity functions $\chi_2^3, \chi_2^1$ on $\partial \Omega_2$ as $\chi_2^3 = 1$ on $\overline{\Gamma_2^{1, 3}} \cup \overline{\Gamma_2^{3, 0}} $ and $\chi_2^3=0$ otherwise, and $\chi_2^1 = 1$ on $\overline{\Gamma_2^{1, 0}}$ and $\chi_2^1=0$ otherwise.
		\item We set the partition of unity functions $\chi_3^1, \chi_3^2$ on $\partial \Omega_3$ as $\chi_3^1 = 1$ on $\overline{\Gamma_3^{1, 2}} \cup \overline{\Gamma_3^{1, 0}} $ and $\chi_3^1=0$ otherwise, and $\chi_3^2 = 1$ on $\overline{\Gamma_3^{2, 0}}$ and $\chi_3^2=0$ otherwise.
	\end{itemize}
	
	In other words given any subdomain $\Omega_j$, on the portion of the boundary $\Gamma_j^{k, i} \subset \partial \Omega_j$ that intersects simultaneously with \emph{two} other subdomains $\Omega_k$ and $\Omega_i$, we take boundary data from only one neighbour. Our results are displayed in Figures \ref{fig:Numerics33}, and they suggest that 
	
	\begin{itemize}
		\item While the choice of different partition of unity functions can lead to quantitatively slightly different infinity norms of the Schwarz iterates, the asymptotic contraction factor remains unchanged;
		\item The first contraction in the infinity norm is always observed at iteration number $N_{\max}+1$ regardless of the choice of the partition of unity functions.
	\end{itemize}
	
	For more information and details on the effect of the choice of partition of unity functions on the convergence of Schwarz methods, see also \cite{NewRef}.

	\section{Application to bio-molecules: van der Waals and SAS cavities}\label{sec:Protein}~\\
	
	\begin{figure}[h]
		\centering
		\begin{subfigure}{0.49\textwidth}
			\centering
			\includegraphics[width=\textwidth]{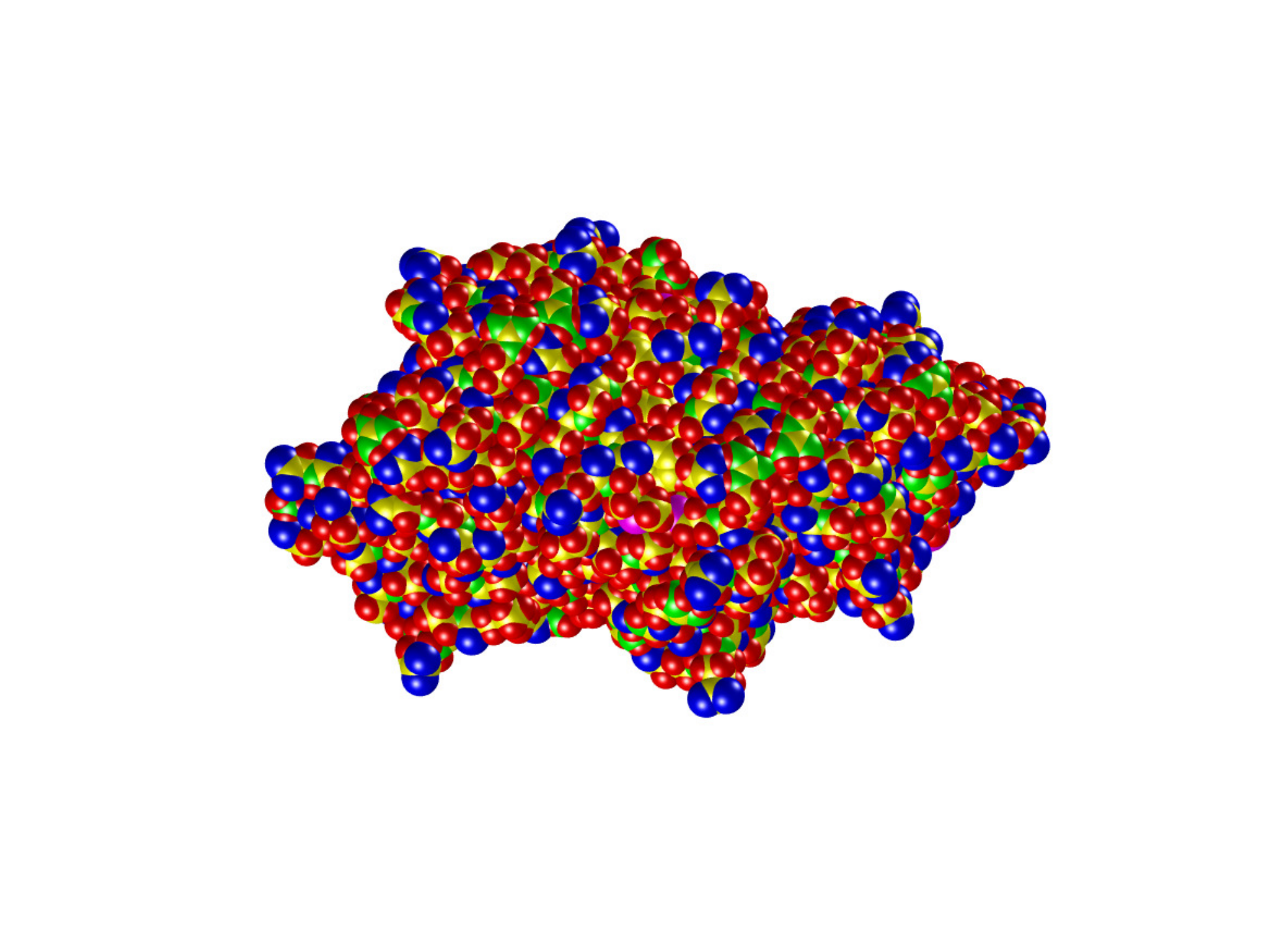} 
			\caption{Atomic Structure of Carboxylase. }			
		\end{subfigure} %
		\begin{subfigure}{0.49\textwidth}
			\centering
			\includegraphics[width=\textwidth]{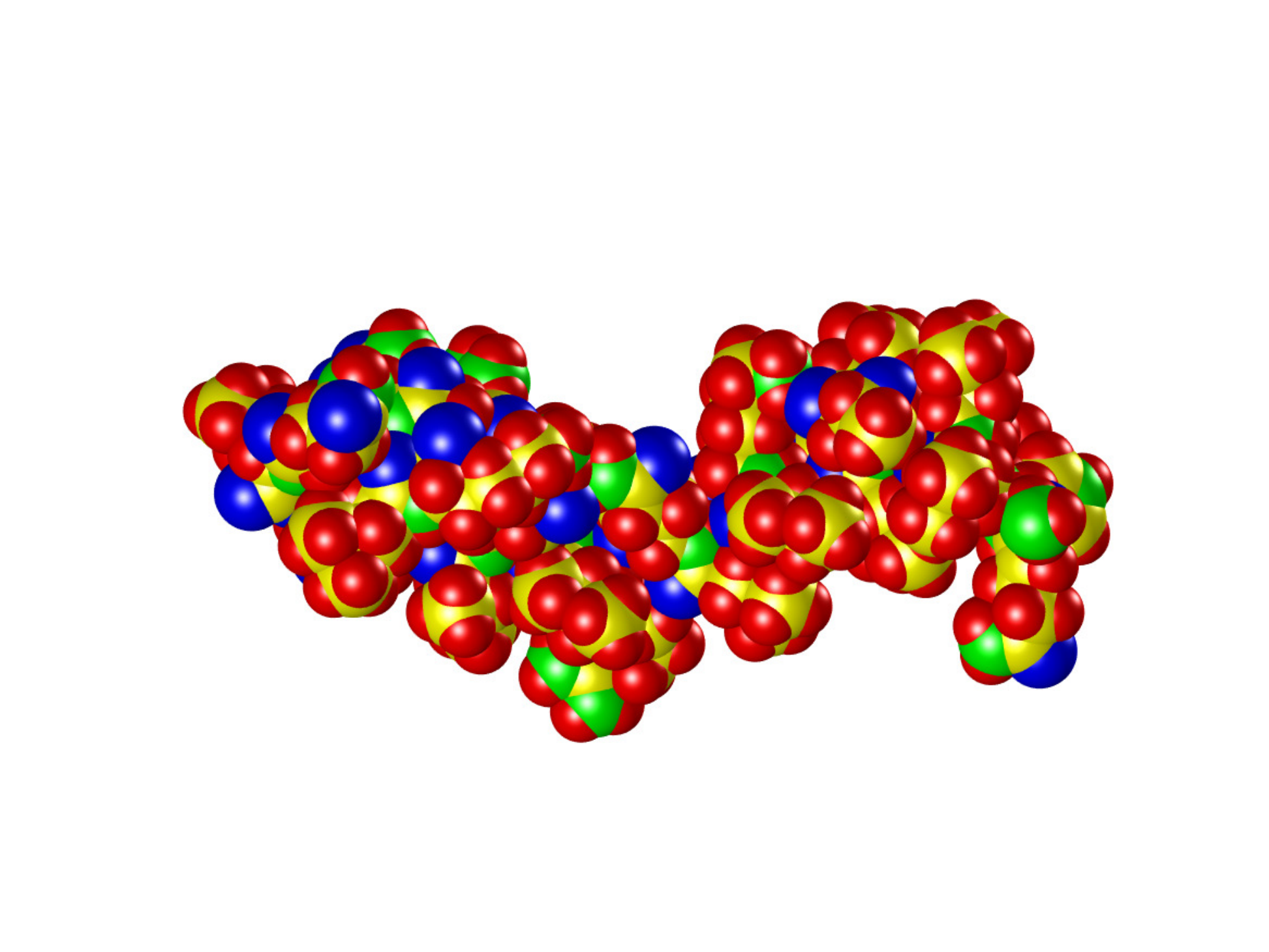} 
			\caption{Atomic Structure of Hiv-1-gp41. }
		\end{subfigure}
		\caption{Atomic Structures of `globular' Carboxylase and `chain-like' Hiv-1-gp41. The colour of each atomic cavity denotes the type of atom: yellow denotes Carbon, red denotes Hydrogen, green denotes Nitrogen, blue denotes Oxygen and magenta denotes Sulphur. The atomic cavities were constructed using 1.1 times the radii obtained from the Universal Force Field (UFF) \cite{UFF}.}
		\label{fig:Protein1}
	\end{figure}
	
	The examples considered thus far in Section \ref{sec:num} have been `toy' problems, which are interesting from a mathematical point of view but do not have a physical origin. The goal of this section is to consider actual biological molecules that have been studied in solvation models in computational chemistry (see, e.g., \cite{BenPaper}) and to understand the consequences of the preceding analysis as it pertains to these complex bio-molecules.
	
	As a reference, we consider the biological molecules that were analysed in the ddCOSMO numerical study presented in \cite{BenPaper}. Following standard practice for the COSMO implicit solvation model, these molecules are represented as the union of intersecting balls with each ball corresponding to one atom in the molecule. The computational domain thus consisted of a union of open balls, and the Laplace equation with a prescribed, smooth boundary condition was solved on this computational domain. The problem setting in \cite{BenPaper} therefore is exactly of the form of Equation \eqref{eq:1}, except of course that the problem is posed in three dimensions. It is important to note however that we must make a choice for the value of the radius of each atomic cavity. While this is a non-trivial question in general (see, e.g., the references in \cite{Quan}), the most basic convention (see, e.g., \cite{Stamm3}) is to construct the atomic cavities using 1.1 times the radii obtained from the Universal Force Field (UFF) \cite{UFF}. This is the convention we adopt as a first step.
	
	Two representative examples of these biological molecules, Carboxylase and Hiv-1-gp41 are displayed in Figure \ref{fig:Protein1}. It is readily seen that the Carboxylase molecule is `globular' in structure and consists of thousands of atoms, and thus thousands of subdomains in the domain decomposition setting. On the other hand, the Hiv-1-gp41 molecule is more `chain-like' in structure and consists of less than 600 atoms. Based on these visualisations, one could expect the Carboxylase molecule to contain a large number of layers and the Hiv-1-gp41 molecule to contain fewer layers. As our preceding analysis shows, this could result in very different computational efficiencies of the parallel Schwarz method (PSM) for the two molecules as described by Equation \eqref{eq:3} and implemented through the ddCOSMO numerical algorithm. 
	\begin{table}[h]
		\begin{center}
			\begin{adjustbox}{max width=\textwidth}
				\begin{tabular}{ | l || c | c | c | c | c |r |}
					\hline \hline
					Bio-Molecule & Vancomycin & UBCH5B & L-plectasin & Hiv-1-gp41& Glutaredoxin & Carboxylase\\ \hline
					Wall-Times & 1 & 4& 1 & 1& 2 & 9 \\ \hline
					Convergence Factors & 0.8801 & 0.9287 & 0.8990 & 0.9105& 0.9269 & 0.9324\\  \hline
					Number of Atoms & 377 & 2364& 567 & 530& 1277 & 6609 \\
					\hline \hline
				\end{tabular}
			\end{adjustbox}
		\end{center}
		\caption{Wall-times (in seconds) and asymptotic convergence rates of the ddCOSMO algorithm. Times less than 1 second have been rounded to one. The wall-times were obtained by solving the underlying linear system using the DIIS algorithm introduced by Peter Pulay in quantum computational chemistry \cite{Peter}. Note that the DIIS algorithm is simply Anderson acceleration. These accelerations are applied to the PSM as a stationary method, which is essentially equivalent to GMRES preconditioned with the PSM \cite{Anderson}. The asymptotic convergence rates were calculated using the PSM as a stationary method.}
		\label{table:Protein1}
	\end{table}
	
	Table \ref{table:Protein1} displays the wall-time of the ddCOSMO algorithm as reported in \cite{BenPaper}, and we can observe the surprising result that the PSM (as a preconditioner) seems to work extremely efficiently for all the biological molecules considered here. In particular, we observe excellent scaling of the method with respect to the number of atoms in each molecule. Furthermore, our own numerical computations, which are shown in the second row of Table \ref{table:Protein1} and displayed in Figure \ref{fig:Protein2} indicate that there is minimal degradation of the asymptotic convergence rate of the ddCOSMO algorithm for these bio-molecules. In order to explain this observation, we decided to analyse carefully the structure of these protein molecules. 
	
	\begin{figure}[h]
		\centering
		\includegraphics[width=0.8\textwidth]{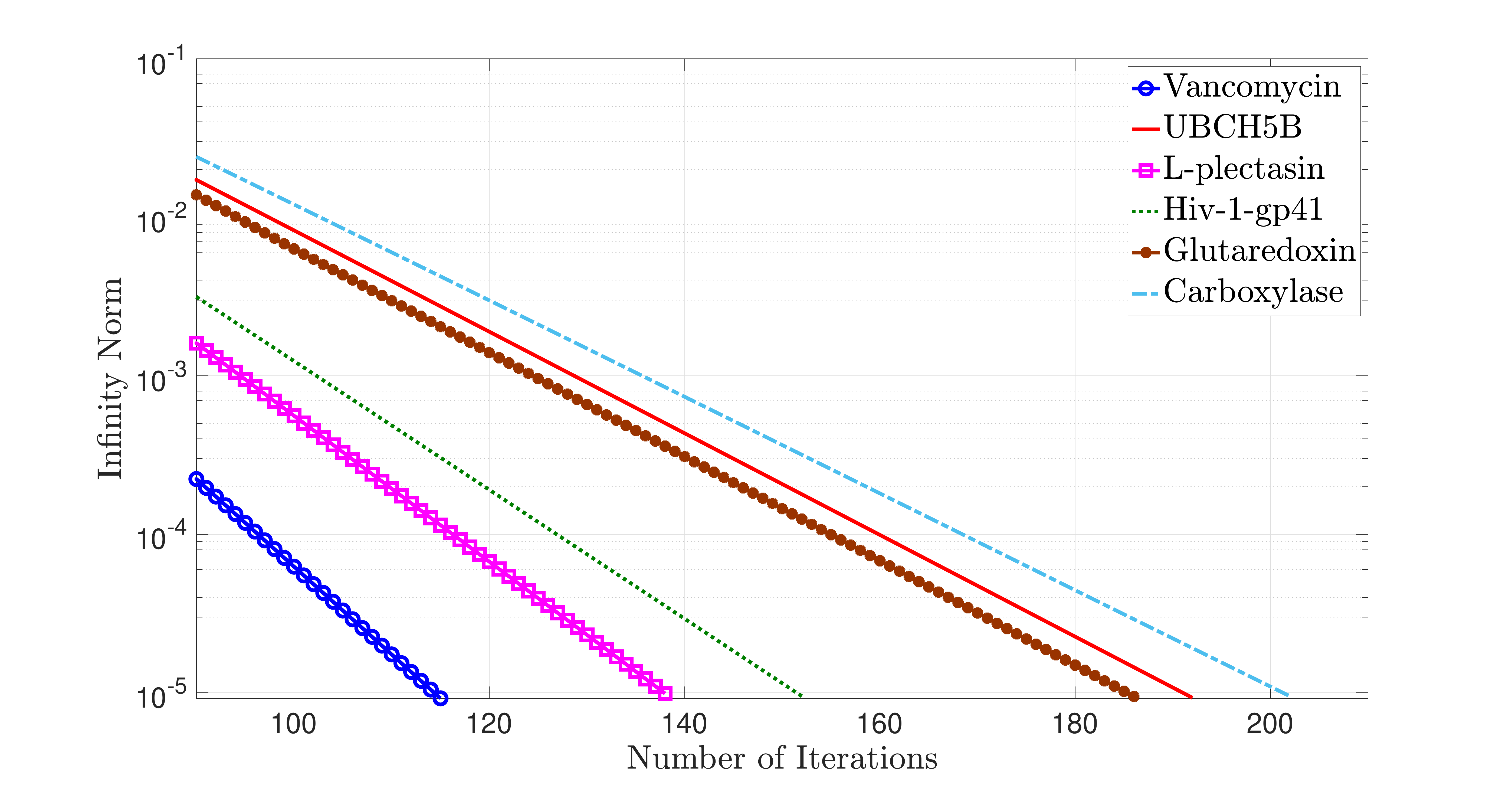} 
		\caption{Log-Lin plot of the Infinity norm of the approximate solution after each iteration of the PSM. All simulations were initialised with a constant 'one' vector and a zero right-hand side was imposed.}
		\label{fig:Protein2}
	\end{figure}

	Table \ref{table:Protein2} contains information about geometrical aspects of each of these protein molecules that affect the convergence rate of the PSM. We indicate the total number of atoms, the number of layers, which is the central component of our analysis, the average number of neighbours, the average over different subdomains of the maximum degree of intersection, and the average over different subdomains of the overlap distance between neighbouring atoms. The following two observations are now key:
	
	\begin{itemize}
		\item There are exactly two layers in each bio-molecule;
		\item The other geometrical parameters which could potentially affect the convergence rate of the PSM do not vary wildly across the different molecules.
	\end{itemize}
	
	\begin{table}[h]
		\begin{center}
			\begin{adjustbox}{max width=\textwidth}
				\begin{tabular}{ | l || c | c | c | c | c |r |}
					\hline \hline
					Bio-Molecule & Vancomycin & UBCH5B & L-plectasin & Hiv-1-gp41& Glutaredoxin & Carboxylase\\ \hline
					Number of Atoms & 377 & 2364& 567 & 530& 1277 & 6609 \\ \hline
					Number of Layers & 2 & 2 & 2 & 2 & 2 & 2\\
					\hline
					Average Number of Neighbours & 15.6 & 19.8 & 16.6 & 16.5& 18.1 & 20.2\\
					\hline
					Average Maximum Intersection Degree & 6.16 & 6.27 & 6.11 & 6.12& 6.50 & 6.24\\
					\hline
					Average Overlap & 2.3 & 2.3 & 2.3 & 2.2& 2.3 & 2.3\\
					\hline \hline
				\end{tabular}
			\end{adjustbox}
		\end{center}
		\caption{Various geometrical properties of each biological molecule considered in the numerical simulations. The average overlap distance is calculated by considering a pair of neighbouring atoms, computing the difference between the sum of the two radii and the distance between the centres, and then taking the average over all pairs of neighbouring atoms.}
		\label{table:Protein2}
	\end{table}

	These observations help explain our numerical results and also explain why the ddCOSMO numerical algorithm works so well in practice even for apparently globular biological molecules \cite{Stamm1, Stamm2, Stamm3}.

	In the computational chemistry community, it is generally argued that using scaled van der Waals radii to construct atomic cavities leads to a sub-optimal definition of the molecular cavity that is unable to accurately capture the physical behaviour of the molecule in a polarisable medium. A more elaborate definition of the molecular cavity is given by the so-called Surface Accessible Surface (SAS) (see, e.g., \cite{Quan}). According to this definition, each atomic cavity is constructed using the sum of two radii: the atomic van der Waals radius and a so-called `probe radius', which can generally be taken to be 1.4 Angstrom, i.e., the approximate radius of a water molecule. As a second step therefore, we adopt the SAS convention to construct the atomic cavities. Figure \ref{fig:Protein300} displays the SAS cavity-based atomic structure of the biological molecules Carboxylase and Hiv-1-gp41 (c.f., Figure \ref{fig:Protein1}).

	\begin{figure}[h]
		\centering
		\begin{subfigure}{0.49\textwidth}
			\centering
			\includegraphics[width=\textwidth]{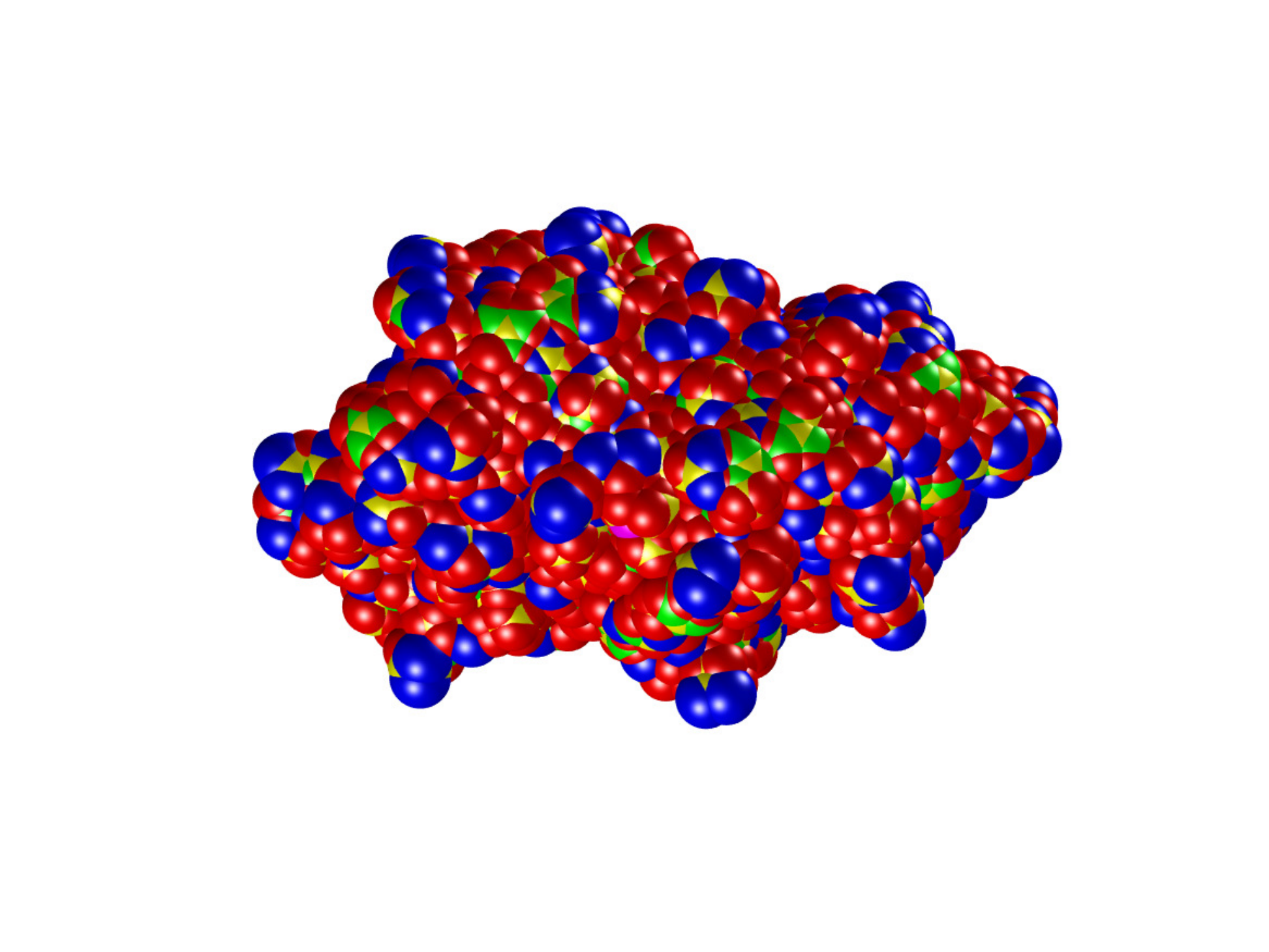} 
			\caption{Atomic Structure of Carboxylase with SAS cavities.}			
		\end{subfigure} %
		\begin{subfigure}{0.49\textwidth}
			\centering
			\includegraphics[width=\textwidth]{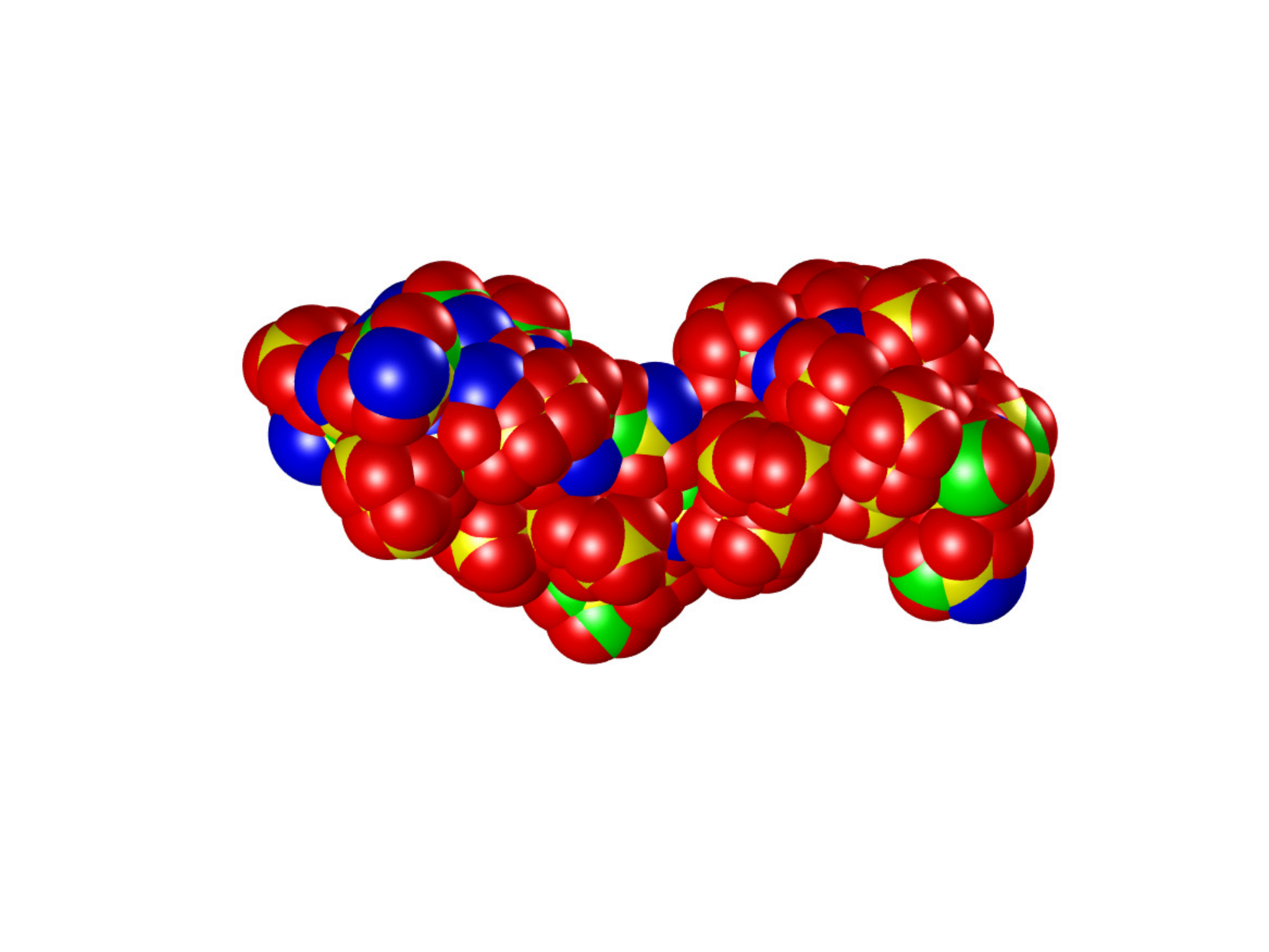} 
			\caption{Atomic Structure of Hiv-1-gp41 with SAS cavities.}
		\end{subfigure}
		\caption{Atomic Structures of `globular' Carboxylase and `chain-like' Hiv-1-gp41. Once again, the colour of each atomic cavity denotes the type of atom: yellow denotes Carbon, red denotes Hydrogen, green denotes Nitrogen, blue denotes Oxygen and magenta denotes Sulphur.}
		\label{fig:Protein300}
	\end{figure}

	We consider once again the six biological molecules analysed in \cite{BenPaper}. Table \ref{table:Protein300} contains information about various geometrical aspects of each of these protein molecules using the SAS cavity definition. We observe immediately that the number of layers in each molecule is now greater than two while the globular molecules UBCH5B and Carboxylase have six and eight layers respectively. Furthermore, the average over different subdomains of the maximum degree of intersection and the average over different subdomains of the overlap between neighbouring atoms are both higher than before but do not differ significantly between the molecules. Thus, the only significant differences between these molecules are now the number of layers and the average number of neighbours. Taken together, these observations suggest that the ddCOSMO algorithm should display slower convergence behaviour for all molecules and worse behaviour for the globular molecules.
	\begin{table}[h]
		\begin{center}
			\begin{adjustbox}{max width=\textwidth}
				\begin{tabular}{ | l || c | c | c | c | c |r |}
					\hline \hline
					Bio-Molecule & Vancomycin & UBCH5B & L-plectasin & Hiv-1-gp41& Glutaredoxin & Carboxylase\\ \hline
					Number of Atoms & 377 & 2364& 567 & 530& 1277 & 6609 \\ \hline
					Number of Layers & 3 & 6 & 4 & 3 & 5 & 8\\
					\hline
					Average Number of Neighbours & 50.5 & 80.7 & 59.5 & 56.3& 69.2 & 86.3\\
					\hline
					Average Maximum Intersection Degree & 17.3 & 19.0 & 17.6 & 17.9& 18.9 & 19.1\\
					\hline
					Average Overlap & 3.6 & 3.7 & 3.8 & 3.6& 3.7 & 3.8\\
					\hline \hline
				\end{tabular}
			\end{adjustbox}
		\end{center}
		\caption{Various geometrical properties of each biological molecule with the atomic cavities constructed using the SAS definition.}
		\label{table:Protein300}
	\end{table}
	
	As a test case, we consider the more `chain-like' molecules Vancomycin and Hiv-1-gp41 and the `globular' molecule UBCH5B. Notice that both the Vancomycin and Hiv-1-gp41 molecules consist of 3 layers when using the SAS definition of the cavity. On the other hand, the molecule UBCH5B, due to it's globular structure, consists of 6 layers. We solve the ddCOSMO system of equations using the PSM. Table \ref{table:Protein400} lists the number of iterations required to solve the linear system up to a fixed tolerance.
	
	\begin{table}[h]
		\begin{center}
			\begin{adjustbox}{max width=\textwidth}
				\begin{tabular}{ | l || c | c | c | c | c |r |}
					\hline \hline
					Bio-Molecule & Vancomycin  & Hiv-1-gp41 & UBCH5B \\ \hline
					Iterations for van der Waals Cavity & 115  & 152& 192\\ \hline
					Iterations for SAS Cavity & 189 & 164& 538 \\ \hline
					\hline
				\end{tabular}
			\end{adjustbox}
		\end{center}
		\caption{The number of iterations of PSM required to solve the ddCOSMO system of equations up to a fixed tolerance for each molecule.}
		\label{table:Protein400}
	\end{table}
	
	As expected, using the SAS definition of cavities results in a larger number of iterations being required to solve the underlying linear system. It is important to note however, that even though the Hiv-1-gp41 molecule contains about $40 \%$ more atoms than Vancomycin, the associated linear system for Hiv-1-gp41 can be solved in a fewer number of iterations than the linear system for Vancomycin. On the other hand, the linear system associated with the UBCH5B molecule, which contains 6 layers, now requires a significantly larger number of iterations to solve. These observations suggest the following conclusions:
	\begin{itemize}
		\item The key geometrical quantities which determine fast or slow convergence of the ddCOSMO algorithm for a given molecule are the number of layers (not the number of atoms by themselves) and the number of neighbouring atoms.
		\item For chain-like molecules, it is possible to use the SAS definition of the cavity without affecting the number of layers too significantly, and hence the convergence properties of ddCOSMO too adversely. On the other hand, for globular molecules, using the SAS definition results in a larger number of layers and consequently worse convergence properties.
	\end{itemize}
	
	We therefore observe that even though the SAS definition might represent a more physical representation of molecular cavities, the use of this definition for globular molecules results in a very serious deterioration in the computational efficiency of ddCOSMO.
	
	\section{Conclusions}\label{sec:conclusions}
	In this work, we presented a detailed convergence and scalability analysis of the PSM
	for the solution of the Laplace equation defined on a domain obtained as the union of
	several intersecting subdomains. The geometric framework we considered is capable of describing solvation problems of interests in practical applications.
	Our analysis, characterizing the properties of the infinite-dimensional Schwarz operator, allowed us to prove that a first contraction in the infinity norm can be achieved in at most $N_{\max}+1$ iterations. Here $N_{\max}$ is the number of layers and represents the maximal distance of the subdomains from the exterior boundary. Numerical experiments and an application to real biological molecules demonstrate the effectiveness of our analysis.

	\bibliographystyle{plain}
	
	\bibliography{biblio}
	
\end{document}